\newcommand{\mathfrakO}{\mathpalette\bigmathfrakO\relax}
\newcommand{\bigmathfrakO}[2]{\scalebox{1.2}{$#1\mathfrak{0}$}}
\newcommand{\mathfrakC}{\mathpalette\bigmathfrakC\relax}
\newcommand{\bigmathfrakC}[2]{\scalebox{1.2}{$#1\mathfrak{c}$}}
\newtheorem{dfn}{Definition}
\numberwithin{equation}{section}
\begin{document}

\title{Decomposition of Symmetrical Classes of Central Configurations}
%



\author{Marcelo P. Santos\orcidlink{0000-0002-9023-0728}       \and
        Leon D. da Silva\orcidlink{0000-0002-2823-3418}} 


\institute{Federal Rural University of Pernambuco \\
    Department of Mathematics\\
    Dom Manoel de Medeiros Street, S/N, Dois Irmãos\\
    Recife, PE\\
    52171-900, Brazil\\
    \email{marcelo.pedrosantos@ufrpe.br} \\
    \email{leon.silva@ufrpe.br}
    }          

\date{Received: date / Accepted: date}

\maketitle

\begin{abstract}
We study central configurations when the set of positions is symmetric.  We use a theorem(proved in \cite{SANTOS_22}) that allows us to use the representation theory of finite groups to explore the symmetry properties of equations for central configurations. This approach simplifies equations for central configurations by considering arbitrary numbers of bodies, symmetry groups, and dimensions.
We discuss how to use this theorem to obtain a more refined decomposition of the equations than that given before. The decomposition presented here uses the symmetry-adapted basis method.

As an application, we give a complete description of the existence and which masses are possible for central configurations of two nested regular
tetrahedrons, two nested regular octahedrons, and two nested regular cubes.
To do this, we employ some methods of rational parameterizations and isolation of zeros of multivariate polynomials. The decomposition obtained allows symbolic calculations to be used to study the expressions.
This way, we summarized the same discussion of works done in \cite{Corbera_Llibre_2n,ZHU,Liu_Tao} and extended them by completing the discussion on the 
cube case, in the inverse and direct problems.
\keywords{Celestial Mechanics \and N-Body Problem  \and Central Configurations\and Inverse Problem \and  Nested Configurations \and Representation Theory}
\subclass{70F10,70F15,70F17,70Fxx,37N05, 47A67.}
\end{abstract}

\section{Introduction}

\label{sec:1}
The N-body problem describes the dynamics of point masses under the action of the gravitational law of attraction. Let $m_i$  represent a positive point mass at position $q_i \in \mathbb{R}^d$. By Newton's second law, the equations of motion are
\begin{equation}
\label{NbodyProblem}
m_i\ddot q_i=\sum_{j \neq i} m_im_j\frac{q_j-q_i}{|q_j-q_i|^3},\quad i=1,\ldots, N.
\end{equation}
These equations are hard enough to prevent finding all solutions if $N>2$.
 Nevertheless, it is possible to derive particular simple solutions. 
 Consider the {\it center of mass} given by
\begin{equation}
     q_0=\frac{m_1q_1+\cdots+m_Nq_N}{m_1+\cdots+m_N}.\label{CentroDeMassa}
 \end{equation}
Suppose that each body has the acceleration vector pointing towards the center of mass with the magnitude proportional to the distance to the center of mass. So there exists some positive $\lambda \in \mathbb{R}$ such that
\begin{equation}
\ddot q_i=-\lambda(q_i-q_0) \quad \forall i=1,\ldots, N.\label{AceleracaoPraOCentroDeMassa}
\end{equation}
Then, releasing the bodies without initial velocities, the system collapses homothetically on its center of mass. These are the so-called {\it homothetic solutions}. Using \eqref{NbodyProblem} and \eqref{AceleracaoPraOCentroDeMassa}, we conclude that at any given time, a homothetic solution satisfies the equation
  \begin{equation}
-\lambda(q_i- q_0)=\sum_{j \neq i} m_j\frac{q_j-q_i}{|q_j-q_i|^3}, \quad i=1,\ldots, N.\label{EquacaoDeConfiguracaoCentral}
\end{equation} 
More generally, if a configuration $q=(q_1,\ldots,q_N)\in\mathbb{R}^{dN}$
 satisfies equation \eqref{EquacaoDeConfiguracaoCentral}
 for some $\lambda \in \mathbb{R}$, we say that $q$
 is a {\it central configuration}. 
 
 Although it is possible a collapse of the system not being homothetic, in any collapse, the limiting configuration tends to be a central configuration (see \cite{saari}).
 
The central configurations give rise to self-similar solutions for the N-body problem. Self-similar means that given two times, the solutions differ only by rotation, translation, or dilatation. If we restrict the dimension $d\leq 3$,  a self-similar solution is possible only if the initial condition is a central configuration.
  If the dimension $d=2$, these solutions are called {\it relative equilibrium}, meaning they are an equilibrium solution in a rotating frame of reference if the convenient angular velocity is chosen.
 
  Central configurations also play an important role in determining the topology of the integral manifolds of the N-body problem
  (see, for example, \cite{SmaleIntegralManifolds,CabralIntegralManifolds,AlbouyIntegralManifolds}).
 For good introductions on central configurations, see \cite{MeyerHallOffin,LlibreMoeckelSimo}. 
 
 From the equations \eqref{EquacaoDeConfiguracaoCentral}, we see that given a central configuration, we obtain new ones by rotation, translation, reflection, and dilatation (centered at the center of mass). So the central configurations are counted up to these transformations.
 
 Among the most important problems in Celestial Mechanics is the problem of given positive masses deciding if the number of classes of central configurations is finite(see, for instance: \cite{AlbouyCabralSantos,SmaleProblems}). More generally, \textit{the direct problem} consists of fixing the masses and finding some positions, giving a central configuration.

The {\it inverse problem} consists of fixing the positions and finding the masses, if any, making the configuration central. A  classic example is
 Lagrange's  result(\cite{Lagrange}), which states that any masses can form a central
configuration for an equilateral triangle. At any dimension, any masses at the regular simplex give rise to a central configuration(see \cite{SaariOntheRole}).

The direct problem for collinear central configurations was studied in the classical works by Euler (\cite{Euler}) and Moulton
(\cite{Moulton}). The inverse problem in the collinear case was studied by Moeckel and Albouy(\cite{MoeckelAbouy})  and by Candice {\it et al} (\cite{Candice}).

In studying central configurations, the symmetry on the positions is a widely used hypothesis, but there is no uniform approach to exploit symmetry in decomposing the equations. The paper \cite{SANTOS_22} addresses the problem using the representation theory of finite groups. The present paper continues presenting this approach by improving results and presenting applications to new cases. Representation theory was also recently used to study stability problems(see \cite{XIA_ZHOU,LEANDRO,LEANDRO_2}).

In this paper, as an application of the developed theory, we prove the existence of central configurations of two nested regular tetrahedrons, two nested regular octahedrons, and two nested regular cubes. 
In each case, we prove the uniqueness of masses(i.e., equal masses in each polyhedron), give explicit relations for the ratio of masses and the edge of the polyhedra, and prove there is an interval where the size of edges can be chosen. As far as we know, the results for the cube case are completely new.
We invite the reader to compare our results with the results cited below.

In \cite{Corbera_Llibre_2n},
Corbera and Llibre study the existence of central configurations of two nested regular platonic polyhedra. They assume that the masses in each polyhedron are equal. They prove that for each ratio of the masses, there exists a unique ratio between the edges of the polyhedra such that the configuration is central. Later, Corbera and Llibre extended their results to include more polyhedra(see  \cite{Corbera_Llibre_3n} and \cite{Corbera_Llibre_Pn}). 

In \cite{ZHU}, Zhu studies necessary and sufficient conditions for nested regular tetrahedrons to form a central configuration. He proves that the masses in each tetrahedron have to be equal. Additionally, he provides a sufficient condition involving a relationship between the masses and the size of the edges of the tetrahedrons. In \cite{Liu_Tao}, Liu and Tao prove similar results as \cite{ZHU}, but for the case of 
 nested regular octahedrons.

Two other important articles related to these results (which assume equal masses on the orbits) are the articles by Ced\'o and Llibre \cite{Cedo_Llibre}, and Montaldi \cite{Montaldi}. In the first, they study symmetric central configurations with equal masses where the positions form an orbit of a finite group of isometries acting on $\mathbb{R}^3$. In the latter, he shows that there exists at least one central configuration given any symmetric configuration and symmetric distribution of masses.

Montaldi used a variational approach and obtained a general result. Whether there are more central configurations with the same symmetry properties remains an open question. Hence, it is desirable to study the specificity of some particular cases. Therefore, it is helpful to have a tool to simplify the equations for these symmetric cases.
It is also worth noting that the technique presented here allows computer algebra systems to make symbolic calculations related to problems that may otherwise not be attainable at our current hardware.

This paper is organized as follows:
\begin{itemize}
    \item[$\bullet$] In Section \ref{The Equations For Central Configurations}, we write the equations used for central configurations.
    \item[$\bullet$] In Section \ref{The Symmetry on the Central Configuration Equations}, we show how the symmetry of positions affects the equations.
    \item[$\bullet$] In Section \ref{Finding A Symmetry Adapted Basis}, we introduce a prelude to representation theory to show how to compute the change of basis that reduces the equations.
    \item[$\bullet$] In Section \ref{Applications: The inverse and direct problem for central configurations of two nested tetrahedron, two nested octahedron}, we discuss some methods of rational parameterization to analyze equations after the reduction given by the change of basis and analyze the cases of two nested tetrahedra and two nested octahedra.
    \item[$\bullet$] In Section \ref{Applications: The inverse and direct problem for central configurations of two nested cubes}, we present some algebra tools to analyze equations after the reduction given by the change of basis and analyze the case of two nested cubes.
     \item[$\bullet$] In Section \ref{Sec: Conclusion}, we summarize the results in a conclusion.
    \item[$\bullet$] In Section \ref{Data availability}, we provide a link to the calculations in the software Sagemath.
\end{itemize}

\section{The Equations For Central Configurations}
\label{The Equations For Central Configurations}
For our purposes, it is convenient to rewrite the equations of central configurations. And we do it employing the following proposition. Since we consider only positive masses, the total mass $ M=\sum_{j=1}^{N} m_j$ is nonzero.

\begin{proposition}
 Suppose the total mass $ M=\sum_{j=1}^{N} m_j$ is nonzero. Then the following conditions are equivalent
\begin{itemize}
    \item[i)] There exists a constant $\lambda \in \mathbb{R}$ such that
\begin{equation}
-\lambda(q_i- q_0)=\sum_{j \neq i} m_j\frac{q_j-q_i}{|q_j-q_i|^3} \quad ,i=1,\ldots, N.\label{EquacaoDeConfiguracaoCentralProp}
\end{equation} 
\item[ii)] There exists a constant $c \in \mathbb{R}$ 
{\small
\begin{equation}
\sum_{j \neq i} m_j\left(\frac{1}{|q_j-q_i|^3}-c\right)(q_j-q_i)={\bf 0} \quad ,i=1,\ldots, N.\label{EquacaoDeConfiguracaoCentralNewDef}
\end{equation} }
\end{itemize}
\end{proposition}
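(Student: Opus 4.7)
The plan is to prove the equivalence by a direct algebraic manipulation of the sum in (ii), exploiting the definition of the center of mass and the hypothesis $M \neq 0$.

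First I would expand equation \eqref{EquacaoDeConfiguracaoCentralNewDef} by distributing, splitting it into the two summands
\begin{equation*}
\sum_{j \neq i} m_j \frac{q_j - q_i}{|q_j - q_i|^3} \;-\; c \sum_{j \neq i} m_j (q_j - q_i) = \mathbf{0}.
\end{equation*}
The gravitational part already matches the right-hand side of \eqref{EquacaoDeConfiguracaoCentralProp}, so the whole argument reduces to identifying the second sum. Using \eqref{CentroDeMassa}, I would compute
\begin{equation*}
\sum_{j \neq i} m_j (q_j - q_i) = \sum_{j=1}^{N} m_j q_j - m_i q_i - q_i \sum_{j \neq i} m_j = M q_0 - M q_i = -M(q_i - q_0),
\end{equation*}
where I used $\sum_{j=1}^N m_j q_j = M q_0$ and $\sum_{j\neq i} m_j = M - m_i$. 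Substituting back gives
\begin{equation*}
\sum_{j \neq i} m_j \frac{q_j - q_i}{|q_j - q_i|^3} = -cM(q_i - q_0).
\end{equation*}

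From this identity the equivalence is immediate: assuming (ii) with constant $c$ yields (i) with $\lambda = cM$, and conversely, assuming (i) with $\lambda$, the hypothesis $M \neq 0$ lets us set $c = \lambda/M$ and retrace the steps to recover (ii). The hypothesis $M \neq 0$ is used exclusively to guarantee that the correspondence $\lambda \leftrightarrow cM$ is a bijection of scalars; without it, either side would collapse. There is no real obstacle here — the argument is essentially a one-line computation once the key identity $\sum_{j \neq i} m_j (q_j - q_i) = -M(q_i - q_0)$ is noted — so the only thing to be careful about is keeping track of signs and recording the explicit relation $\lambda = cM$ between the two constants.
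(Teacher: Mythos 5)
Your proposal is correct and follows essentially the same route as the paper: both arguments hinge on the identity $\sum_{j\neq i} m_j(q_j-q_i) = -M(q_i-q_0)$ obtained from the definition of the center of mass, with the constants related by $\lambda = cM$. No issues to report.
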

\begin{proof} $i) \Rightarrow ii)$ Using the expression \eqref{CentroDeMassa} for the center of mass, we conclude that the equation \eqref{EquacaoDeConfiguracaoCentralProp} is equivalent to $\sum_{j \neq i} m_j\left(\frac{1}{|q_j-q_i|^3}-\frac{\lambda}{M}\right)(q_j- q_i)={\bf 0}. $
It suffices to choose $c=\frac{\lambda}{M}$.\\
$ii) \Rightarrow i)$ On the other hand, equation \eqref{EquacaoDeConfiguracaoCentralNewDef}
is equivalent to the equation
{\small
\begin{align*}
 \sum_{j \neq i} m_j\frac{q_j-q_i}{|q_j-q_i|^3}=-c\sum_{j =1}^{N}m_j (q_i-q_j)=-(cM)(q_i-q_0)
.\end{align*}}
So if we choose $\lambda=(cM),$ equation \eqref{EquacaoDeConfiguracaoCentralProp} is satisfied.
\end{proof}

The configuration $q$ is central  if the equation \eqref{EquacaoDeConfiguracaoCentralProp}(respectively \eqref{EquacaoDeConfiguracaoCentralNewDef}) holds for some value of $\lambda$ (respectively $c$). It is possible to show that the values $\lambda$ and $c$ are determined uniquely by the respective equation, the constant $\lambda$ corresponding to the quotient between the potential and the inertia moment, and $c$ corresponding to this quotient divided by the total mass. Thus, we are interested only in solutions of $\eqref{EquacaoDeConfiguracaoCentralNewDef}$ 
such that $c$ is positive.

Given a configuration $q=(q_1,\ldots,q_n)\in\mathbb{R}^{dN}$, define 
its associated matrix, of order $Nd\times N$,   by 
\begin{equation}
S(c)=[S_{ij}(c)], \label{matrixS}
\text{ where }  S_{ij}(c)=\begin{cases}
\left(\frac{1}{|q_j-q_i|^3}-c\right)(q_j-q_i)&\text{ for }  i\neq j,\\
{\bf 0}&  \text{ for } i=j.
\end{cases}
\end{equation}
 Consider
the mass vector $\overrightarrow{m} =(m_1,m_2,\ldots,m_N)$.
By equation \eqref{EquacaoDeConfiguracaoCentralNewDef}, the linear system
\begin{align}
    S(c)\overrightarrow{m}={\bf 0},
\end{align}
depending on the parameter $c$ is equivalent to equations for central configurations.
Therefore, we will study which values of $c$ in the matrix $S(c)$ have a kernel with a vector mass $\overrightarrow{m}\in (\mathbb{R}^{+})^N$.
 In the next section, we omit the parameter $c$.

\section{The Symmetry on the Central Configuration Equations}
\label{The Symmetry on the Central Configuration Equations}

Let $q=(q_1,q_2,\ldots,q_N)\in \mathbb{R}^{Nd}$ be a configuration. And $O(d)$
the group of orthogonal linear transformations on $\mathbb{R}^ d$. Suppose a homomorphism $\rho: G \rightarrow O(d)$
permutes the positions. 

Let $S_N$ be the group of permutations of $N$ elements 
and $\zeta:G\rightarrow S_{N}$ 
the homomorphism induced on the set of indices by the action of $\rho$,
i.e., $\rho(g)(q_i)=q_{\zeta(g)(i)}.$ 
Recalling the expression for $S_{ij}$ given at $\eqref{matrixS}$ and using the fact that $\rho(g)$ is orthogonal, 
it follows that 
\begin{equation}
\rho(g)(S_{ij})=S_{\zeta(g)(i)\zeta(g)(j)}. 
\label{action_rho}
\end{equation}
If we consider the left block-diagonal action of $\rho(g)$ on $S$, we get the matrix $[\widehat{S}_{ij}]$,  where $\widehat{S}_{ij}=S_{\zeta(g)(i)\zeta(g)(j)}.$
More precisely, let $\mathbb{I}_N:G \to GL(\mathbb{R}^N)$ denote the trivial homomorphism and consider the tensor product  $(\mathbb{I}_N\otimes\rho):G\to GL(\mathbb{R}^{Nd})$, so
\begin{equation}
\label{ItensorRho} (\mathbb{I}_N\otimes\rho)(g)S=[\widehat{S}_{ij}] \text{ where } \widehat{S}_{ij}=S_{\zeta(g)(i)\zeta(g)(j)}.
\end{equation}
Let $\theta:G \rightarrow GL(\mathbb{R}^{N})$ be the a homomorphism  given by
\begin{equation}
\theta(g)((m_1,m_2,\ldots,m_N)^T)= (m_{\zeta(g^{-1})(1)},m_{\zeta(g^{-1})(2)},m_{\zeta(g^{-1})(N)})^T.\label{RightActionForTheta}
\end{equation}
The matrix presentation of $\theta$ is given by the permutation matrices $\theta_{ij}(g)=\delta_{i,\zeta(g)(j)}$(where $\delta$ stands for the Kronecker delta). The right multiplication by the matrix $\theta(g)$ results in permuting the columns, while the left multiplication permutes the rows. As $S$ is a matrix of order $Nd\times N$, we act on the left multiplication through the tensor product $\theta\otimes \mathbb{I}_d$, where $\mathbb{I}_d$ represents the identity on $\mathbb{R}^d$.
It is easy to verify that
\begin{equation}
(\theta\otimes \mathbb{I}_d)(g^{-1})S\theta(g)
=\left[\begin{array}{ccc}
S_{\zeta(g)(1)\zeta(g)(1)}&\ldots&S_{\zeta(g)(1)\zeta(g)(N)} \\ 
\vdots&\vdots&\vdots\\
S_{\zeta(g)(N)\zeta(g)(1)}&\ldots&S_{\zeta(g)(N)\zeta(g)(N)} \\ 
\end{array}\right]
\label{TGInvST}.
\end{equation}
Using equations \eqref{ItensorRho} and \eqref{TGInvST}, we may conclude that
\begin{equation*}
\left(\theta\otimes\mathbb{I}_d\right)(g^{-1})S\theta(g)=\left(\mathbb{I}_N\otimes\rho \right)(g)S.
\end{equation*}
Hence, using the properties of the tensor product we conclude that
\begin{equation*}
S\theta(g)=\left(\theta\otimes\mathbb{I}_d\right)(g)\left(\mathbb{I}_N\otimes\rho \right)(g)S=\left(\theta\otimes\rho\right)(g)S.
\end{equation*}
 We summarize the discussion in the next theorem.
\begin{theorem}[Symmetry Theorem]
\label{SymmetryTheorem}
 Let $q=(q_1,q_2,\ldots,q_N) \in \mathbb{R}^{Nd}$ be a configuration and
 suppose that a finite group $G<O(d)$ acts on the configuration $q$, through the homomorphism $\rho$, permuting the positions.
 Consider the permutation action on indices 1,2, ..., N induced by the action of $\rho$. Let $\theta: G \to O(N)$ be the matrix presentation of this action.
 If 
 $S$ is the matrix associated with $q$ defining the central configuration equations \eqref{matrixS}, then the following equation holds
\begin{equation}
S\theta(g)=\left(\theta\otimes\rho\right)(g)S,\quad \forall g \in G. \label{Symmetry_Equation}
\end{equation}
 \end{theorem}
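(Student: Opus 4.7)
The plan is to verify the theorem at the level of the $d\times 1$ blocks $S_{ij}$ and then reassemble the identity in matrix form via tensor products. The starting observation is orthogonality: since $\rho(g)\in O(d)$ preserves norms and $\rho(g)(q_i)=q_{\zeta(g)(i)}$, we have $|q_{\zeta(g)(j)}-q_{\zeta(g)(i)}|=|q_j-q_i|$, and plugging this into the defining formula \eqref{matrixS} yields the block-level symmetry $\rho(g)(S_{ij})=S_{\zeta(g)(i)\zeta(g)(j)}$ recorded in \eqref{action_rho}.

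Next I would translate this identity into two compatible matrix statements. Applying $\rho(g)$ to each block of $S$ is the same as left multiplication by $(\mathbb{I}_N\otimes\rho)(g)$, producing the matrix whose $(i,j)$ block equals $S_{\zeta(g)(i)\zeta(g)(j)}$. The same matrix can be manufactured by permuting block rows and block columns of $S$ according to $\zeta(g)$: under the convention $\theta_{ij}(g)=\delta_{i,\zeta(g)(j)}$ fixed by \eqref{RightActionForTheta}, this is achieved by left multiplication with $(\theta\otimes\mathbb{I}_d)(g^{-1})$ and right multiplication with $\theta(g)$. Equating the two produces $(\theta\otimes\mathbb{I}_d)(g^{-1})\,S\,\theta(g)=(\mathbb{I}_N\otimes\rho)(g)\,S$.

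To close the argument I would left-multiply by $(\theta\otimes\mathbb{I}_d)(g)$ and collapse the composition of tensor-product representations via the identity $(\theta\otimes\mathbb{I}_d)(g)\,(\mathbb{I}_N\otimes\rho)(g)=(\theta\otimes\rho)(g)$, yielding the asserted formula $S\theta(g)=(\theta\otimes\rho)(g)\,S$. The main obstacle I anticipate is purely notational: one has to be scrupulous about which side the permutation matrices act on and whether $\zeta(g)$ or $\zeta(g^{-1})$ appears, since \eqref{RightActionForTheta} places the inverse in the mass-vector convention, whereas right multiplication of $S$ by $\theta(g)$ should permute the block columns by $\zeta(g)$ itself. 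Once those conventions are pinned down, everything reduces to checking the identity on a single $(i,j)$ block, so there is no genuine analytic content beyond the orthogonality of $\rho(g)$.
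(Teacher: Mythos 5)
Your proposal is correct and follows essentially the same route as the paper: establish the block identity $\rho(g)(S_{ij})=S_{\zeta(g)(i)\zeta(g)(j)}$ from orthogonality, express it both as left multiplication by $(\mathbb{I}_N\otimes\rho)(g)$ and as conjugation-style row/column permutation $(\theta\otimes\mathbb{I}_d)(g^{-1})S\theta(g)$, then cancel via the tensor-product identity. The notational caution you flag about $\zeta(g)$ versus $\zeta(g^{-1})$ is exactly the point the paper handles with its convention \eqref{RightActionForTheta}, so there is no gap.
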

\section{Finding A Symmetry Adapted Basis}
\label{Finding A Symmetry Adapted Basis}

This section gives a concise outline of the symmetry-adapted basis method of representation theory of finite groups. 
For brevity, we will omit demonstrations. The interested reader can find the demonstrations and detailed discussions in the good references \cite{Serre}(Chapters 1 to 5) and \cite{Stiefel_Fassler}.

Let $V$ be a finite-dimensional vector space over $\mathbb{C}$. Let $G$ be a finite group. A homomorphism $\phi:G\to GL(V)$ is a {\it linear representation} of $G$. 
When $\phi$ is given, we say that $V$ is a representation of $G$.
The dimension of $V$ is called the {\it degree} of $\phi$, we shall denote the degree of $\phi$ as $\deg(\phi)$.
A vector subspace $U\subset V$ is said $\phi$-invariant if $\phi(g)(U)\subset U$ for all $g \in G$. A $\phi$-invariant subspace $U\subset V$ is a {\it subrepresentation} of $\phi$. So, a subrepresentation is by itself also a representation.
A representation $\phi$ is {\it reducible} if there is a $\phi$-invariant proper subspace $U\subset V$. And {\it irreducible} otherwise. 
\begin{theorem}[Maschke's Theorem]
If there is a $\phi$-invariant subspace $V_1\subset V$.  Then there is also a complementary $\phi$-invariant subspace $V_2\subset V$ such that $V=V_1\oplus V_2$.
\end{theorem}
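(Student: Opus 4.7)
The plan is to build a $\phi$-invariant projection onto $V_1$ by averaging an arbitrary projection over the group $G$, and then take its kernel as the desired complement $V_2$. This is the standard averaging trick, which works here because $G$ is finite and we are over a field of characteristic zero, so $|G|$ is invertible.

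First, I would pick any vector-space complement $W$ of $V_1$ in $V$ (which exists by elementary linear algebra, with no group action involved), and let $p \colon V \to V$ be the linear projection onto $V_1$ along $W$, so that $p^2 = p$, $\mathrm{Im}(p) = V_1$, and $\ker(p) = W$. In general $W$ need not be $\phi$-invariant, so $p$ need not commute with $\phi$. I would then define the averaged operator $\tilde p = \tfrac{1}{|G|}\sum_{g \in G} \phi(g)\, p\, \phi(g)^{-1}$.

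Next, I would verify three properties of $\tilde p$. Property (i): $\mathrm{Im}(\tilde p) \subset V_1$, because each summand $\phi(g)\,p\,\phi(g)^{-1}$ has image inside $\phi(g)(V_1) \subset V_1$ by the $\phi$-invariance of $V_1$. Property (ii): $\tilde p$ restricts to the identity on $V_1$, since for $v \in V_1$ we have $\phi(g)^{-1}v \in V_1$, hence $p(\phi(g)^{-1}v) = \phi(g)^{-1}v$, and so every summand is $v$; averaging gives $\tilde p(v) = v$. Together (i) and (ii) imply $\tilde p^2 = \tilde p$ with image exactly $V_1$. Property (iii): $\tilde p$ commutes with $\phi$, i.e.\ $\phi(h)\tilde p = \tilde p\, \phi(h)$ for every $h \in G$; this follows from the reindexing $g \mapsto h^{-1}g$ in the averaging sum, which leaves $\tilde p$ invariant.

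Finally, I would set $V_2 = \ker(\tilde p)$. Since $\tilde p$ is an idempotent with image $V_1$, elementary linear algebra yields $V = V_1 \oplus V_2$. The $\phi$-invariance of $V_2$ drops out of property (iii): if $\tilde p(v) = 0$, then $\tilde p(\phi(h)v) = \phi(h)\tilde p(v) = 0$, so $\phi(h)v \in V_2$. I do not anticipate a serious obstacle in this proof; the only point requiring care is the bookkeeping in the reindexing argument for (iii), and the essential use of $|G|$ being invertible in the base field (which explains why the result fails in positive characteristic dividing $|G|$). An entirely parallel route would be to produce a $G$-invariant inner product by averaging any inner product on $V$ and then take $V_2 = V_1^\perp$; I prefer the projection version since it avoids introducing extra structure.
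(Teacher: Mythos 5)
Your proof is correct and complete; the paper itself gives no argument for Maschke's Theorem, simply citing Serre, and your averaging-of-a-projection construction is precisely the standard proof found in that reference. Nothing to add.
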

\begin{proof}
See \cite[p.6]{Serre}. 
\end{proof}

As a corollary of Maschke's theorem, we conclude that a representation $\phi$
is completely reducible, which means that there is a decomposition $V=\bigoplus_{i=1}^{r} U_i$ such that the subrepresentation $\phi_i(g)=\phi(g)|_{U_i},\forall g \in G$ is irreducible. Accordingly, we can express 
$ \phi=\phi_{1}\oplus\cdots \oplus\phi_{r}.$
Since $U_i$ is invariant, choosing a basis for each $U_i$ to form a basis for $V,$ the matrix presentation of $\phi$ is given by block diagonal matrices, where each block corresponds to the restriction to $U_i$.
This decomposition implies the existence of a change of basis $P$ such that the representation is given by
\begin{align}
\label{Decomposing_a_representation_into_blocks}
    P^{-1}\phi(g)P= \left (
\begin{array}{r|r|r}
\phi_1(g) &  & \\\hline
    & \ddots & \\\hline
    &  & \phi_r(g) 
\end{array}\right).
\end{align}
Given two representations $\phi:G\to GL(V)$ and $\psi:G \to GL(W)$ a linear transformation $\tau:V\to W$ such that 
\begin{align}
\psi(g)\circ \tau= \tau\circ \phi(g), \quad\forall g \in G,     
\end{align}
is called an \textit{equivariant map}. If $\tau$ is  an isomorphism, we say $\psi$ and $\phi$ are \textit{equivalent}.

Consider the decomposition of a representation $\phi: G\to GL(V)$ into its irreducible subrepresentations. If we collect all equivalent subrepresentations, then we have
\begin{align}
V_j=\bigoplus_{l=1}^{c_j} V_{j}^{l},\label{isotypic_component}
\end{align}
where $V_{j}^{l}$ are equivalent subrepresentations for $l=1,\ldots, c_j.$ 
The number $c_j$ is called {\it multiplicity}. The degree of $V_j^l, l=1,\ldots,c_j$ is the same since they are equivalent, and it is denoted by $n_j$.
The subspaces $V_j$ are called {\it isotypic components}.
So, the isotypic component $V_j$ is a $c_jn_j$-dimensional subspace.
 The decomposition of 
 $V_j$ pointed out in \eqref{isotypic_component} is not unique.
 On the other hand, the decomposition of the whole space \begin{align}
\label{isotypic_decomposition}
  V=V_{1}\oplus\cdots \oplus V_{n},  
\end{align}
induced by the isotypic components is called {\it isotypic decomposition}, and it is unique.
The decomposition of an equivariant map based on the isotypic decomposition was used in \cite{SANTOS_22}. In what follows, we explore a more refined decomposition.

The decomposition into equivalent irreducible  subrepresentations,
$
V_j=\bigoplus_{l=1}^{c_j} V_{j}^{l},
$ induces a
diagonal block composed of equivalent subrepresentations. We will represent this block as
{\small
\begin{align}
\label{block_jl_in_the_matricial_presentation}
\def\arraystretch{1.4}
    c_j\phi_j=\left(\begin{array}{r|r|r}
\phi_j^1 &        &  \\\hline
       &\ddots  &  \\\hline
       &        &\phi_j^{c_j} \end{array}\right).
\end{align}
}
If necessary, after reordering, we can assume that in equation \eqref{Decomposing_a_representation_into_blocks}, the equivalent representations are grouped, with blocks given by \eqref{block_jl_in_the_matricial_presentation} and its matricial presentation is given by 
{\small
\begin{align}
\label{Decomposing_cj_phij}
\def\arraystretch{1.4}
    P^{-1}\phi P= \left (
\begin{array}{r|r|r|r}
c_1\phi_1 & & & \\\hline
          &c_2\phi_2 & & \\\hline
          & & \ddots & \\\hline
          & &   & c_n\phi_n
\end{array}\right).
\end{align}
}
Correspondingly to this decomposition \eqref{Decomposing_cj_phij}, we can write 
\begin{align}
\label{Decomposition_multiplicity_times_irreducible}    \phi=c_1\phi_{1}\oplus\cdots \oplus c_n\phi_{n}.
\end{align}
\subsection{The existence of a block matrix structure for an equivariant map}
\label{block_structure_equivariant_operator}
It is well-known in the literature that
we can decompose an equivariant map  into a block matrix structure with many null blocks. We present some theorems that imply the existence and a way to calculate the block structure.
The main result is the following lemma. 
\begin{theorem}[Schur's Lemma]\label{lema_de_schur}
Let $\psi: G \longrightarrow GL(V)$ and $\phi: G \longrightarrow GL(W)$ be two irreducible representations of the group $G$. And $\tau$ an equivariant map.
Exactly one of the following affirmatives is true
\begin{itemize}
    \item[a)] $\tau$ is the null linear transformation. 
    \item[b)] $\tau$ is a linear isomorphism, $\psi$ and $\phi$ are equivalent and $\psi(g)=\tau\circ\phi(g)\circ\tau^{-1}$.
\end{itemize}
\end{theorem}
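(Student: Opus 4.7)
The approach is to extract two natural subspaces from the data, namely $\ker\tau$ and $\mathrm{im}\,\tau$, show that each is invariant under the relevant representation, and then let irreducibility force the dichotomy. Throughout I take $\tau:W\to V$ (consistent with the equivariance relation $\psi(g)\circ\tau=\tau\circ\phi(g)$ stated just before the lemma, and with the conclusion $\psi(g)=\tau\circ\phi(g)\circ\tau^{-1}$).

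First I would dispose of exclusivity: if (b) holds, $\tau$ is an isomorphism of nonzero spaces (since $\phi$ and $\psi$ are irreducible, hence nonzero), so $\tau\neq 0$ and (a) fails. Thus at most one of (a), (b) can hold, and it suffices to prove at least one holds.

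Next I would establish the two invariance claims. For $w\in\ker\tau$ and $g\in G$, equivariance gives
\begin{equation*}
\tau(\phi(g)w)=\psi(g)\tau(w)=\psi(g)\cdot 0=0,
\end{equation*}
so $\phi(g)w\in\ker\tau$, proving $\ker\tau$ is a $\phi$-invariant subspace of $W$. Symmetrically, for any $v=\tau(w)\in\mathrm{im}\,\tau$,
\begin{equation*}
\psi(g)v=\psi(g)\tau(w)=\tau(\phi(g)w)\in\mathrm{im}\,\tau,
\end{equation*}
so $\mathrm{im}\,\tau$ is a $\psi$-invariant subspace of $V$. By irreducibility of $\phi$, either $\ker\tau=0$ or $\ker\tau=W$; by irreducibility of $\psi$, either $\mathrm{im}\,\tau=0$ or $\mathrm{im}\,\tau=V$.

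Finally I would combine these alternatives. If $\ker\tau=W$ or $\mathrm{im}\,\tau=0$, then $\tau=0$ and we are in case (a). Otherwise $\ker\tau=0$ and $\mathrm{im}\,\tau=V$, so $\tau$ is a linear isomorphism; rewriting the equivariance identity yields $\psi(g)=\tau\circ\phi(g)\circ\tau^{-1}$ for all $g\in G$, which is exactly case (b) together with the equivalence of $\psi$ and $\phi$. Since I have not invoked any property special to $\phi$, $\psi$, or $\tau$ beyond those in the hypotheses, there is no substantive obstacle: the crux is simply recognizing that equivariance forces $\ker\tau$ and $\mathrm{im}\,\tau$ to be subrepresentations, after which irreducibility does all of the work.
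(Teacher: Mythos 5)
Your proof is correct and complete: the kernel/image invariance argument followed by irreducibility is the standard proof of Schur's Lemma. The paper does not prove this statement itself --- it simply cites Serre and Stiefel--F\"assler --- and your argument is precisely the one found in those references, so there is nothing to compare beyond noting that you have supplied the proof the paper delegates to the literature.
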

\begin{proof} See \cite[p.~13]{Serre} or \cite[p.~33]{Stiefel_Fassler}.
\end{proof}
In order to use Schur's Lemma, we may decompose the representations in their irreducible constituents, as explained in the following lemma.
\begin{lemma}
\label{lemma_block_zero}
Let $\psi:G\rightarrow GL(V)$ and $\phi:G\rightarrow GL(W)$ be two representations that decompose into irreducible as
$\psi=b_1\psi_{1}\oplus\cdots \oplus b_k\psi_{k}$ and $\phi=c_1\phi_{1}\oplus\cdots \oplus c_n\phi_{n}$. Suppose $\tau: V \rightarrow W$ is an equivariant map. There exists a decomposition of $\tau$ into blocks with the property: 
If $\psi_i$  is not equivalent to $\phi_j$,
there is a corresponding null block $\mathcal{T}_{ij}$.
This block is located
between rows $1+\sum_{p<i}b_p\cdot\deg(\psi_p)$ and $\sum_{p<i+1}b_p\cdot\deg(\psi_p)$; and columns $1+\sum_{p<j}c_p\cdot\deg(\phi_p)$ and $\sum_{p<j+1}c_p\cdot\deg(\phi_p)$.
 \end{lemma}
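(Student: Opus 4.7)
The plan is to reduce the claim to an application of Schur's Lemma (Theorem~\ref{lema_de_schur}) by carefully tracking the block decomposition of $\tau$ induced by the irreducible decompositions of $\psi$ and $\phi$. First, I would use two changes of basis, one for $V$ and one for $W$, to bring $\psi$ and $\phi$ simultaneously into the form of equation \eqref{Decomposing_cj_phij}: namely $V=\bigoplus_{i=1}^{k}\bigoplus_{a=1}^{b_i}V_i^{a}$ with each $V_i^{a}$ carrying the irreducible $\psi_i$ of degree $\deg(\psi_i)$, and similarly $W=\bigoplus_{j=1}^{n}\bigoplus_{c=1}^{c_j}W_j^{c}$ with each $W_j^{c}$ carrying $\phi_j$. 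In these bases the matrix of $\tau$ is automatically partitioned into rectangular blocks indexed by the pairs $((i,a),(j,c))$, where the $(i,a),(j,c)$-block has dimensions $\deg(\phi_j)\times\deg(\psi_i)$.

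Next, I would group all blocks with the same pair $(i,j)$ into a single super-block $\mathcal{T}_{ij}$. Its row range is $1+\sum_{p<j}c_p\deg(\phi_p)$ to $\sum_{p\le j}c_p\deg(\phi_p)$ and its column range is $1+\sum_{p<i}b_p\deg(\psi_p)$ to $\sum_{p\le i}b_p\deg(\psi_p)$, matching the statement (up to the row/column convention implicit in the way $\tau$ acts). The key observation is that each individual sub-block inside $\mathcal{T}_{ij}$ represents the linear map
\[
\tau_{i,a}^{j,c}:V_i^{a}\hookrightarrow V\xrightarrow{\;\tau\;}W\twoheadrightarrow W_j^{c},
\]
obtained by composing $\tau$ with the natural inclusion and the projection from the chosen direct sum decomposition. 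Since the inclusion and projection are equivariant (they are the identity-on-block and zero-elsewhere maps that commute with the block-diagonal action of $G$), and $\tau$ is equivariant by hypothesis, each $\tau_{i,a}^{j,c}$ is an equivariant map between the irreducible representations $\psi_i$ and $\phi_j$.

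At this point Schur's Lemma applies directly to each $\tau_{i,a}^{j,c}$: if $\psi_i$ and $\phi_j$ are not equivalent, then alternative (b) of Theorem~\ref{lema_de_schur} is excluded, forcing alternative (a), i.e.\ $\tau_{i,a}^{j,c}=0$ for every $a\in\{1,\dots,b_i\}$ and $c\in\{1,\dots,c_j\}$. Consequently the entire super-block $\mathcal{T}_{ij}$ vanishes, which is exactly the claim. I do not expect any real obstacle in the argument; the only delicate point is bookkeeping, namely verifying that the row and column ranges attributed to $\mathcal{T}_{ij}$ in the statement coincide with the ones produced by the ordering of the irreducibles in the normal form \eqref{Decomposing_cj_phij}. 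This is a direct cumulative-sum computation, and one should also note that the decomposition itself is not canonical (the choice of the equivalent subrepresentations $V_j^l$ within an isotypic component is free), but the zero-block pattern is, since it depends only on inequivalence of irreducibles.
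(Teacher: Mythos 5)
Your proposal is correct and follows essentially the same route as the paper: change bases so that $\psi$ and $\phi$ are in the block-diagonal form \eqref{Decomposing_cj_phij}, observe that each sub-block of $\mathcal{T}=Q^{-1}\tau P$ is an equivariant map between copies of the irreducibles $\psi_i$ and $\phi_j$ (the paper derives this from the matrix identity $\overline{\psi}\mathcal{T}=\mathcal{T}\overline{\phi}$, you phrase it via equivariant inclusions and projections, which is the same computation), and then kill the inequivalent blocks with Schur's Lemma. The row/column bookkeeping you flag is handled identically in the paper, so there is nothing to add.
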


\begin{proof}

There exists a change of basis matrices $P$ and $Q$ such that $\overline{\phi}=P^{-1}\phi P$ and $\overline{\psi}=Q^{-1}\psi Q$ have a form alluded at the equation \eqref{Decomposing_cj_phij}.
Since
$\psi\circ \tau= \tau\circ \phi$, then $ Q\overline{\psi}Q^{-1} \tau= \tau P\overline{\phi}P^{-1}\Rightarrow    \overline{\psi}Q^{-1} \tau P= Q^{-1}\tau P\overline{\phi}$, set
$\mathcal{T}=Q^{-1}\tau P$ the expression of $\tau$ relative to the new basis, then the following equation holds
\begin{align}
 \overline{\psi} \mathcal{T}= \mathcal{T} \overline{\phi},\label{equivariance_of_tau}
\end{align}
Subdivide $\mathcal{T}$ into blocks in the following form:
For each $i=1, \ldots, k$ group a number of rows equal to $b_i$ times the degree of $\psi_i$;  and for each $j=1,\ldots, n$
    group a number of columns equal to $c_j$ times the degree of $\phi_j$. Then we get
\begin{align}
\mathcal{T} =\left(\begin{array}{c|c|c}
    \mathcal{T}_{11} & \ldots & \mathcal{T}_{1n}\\\hline
    \mathcal{T}_{21} & \ldots & \mathcal{T}_{2n}\\\hline
    \vdots & \ldots & \vdots\\\hline
    \mathcal{T}_{k1} & \ldots &  \mathcal{T}_{kn}
    \end{array}\right).
    \label{first_subdivision}
\end{align}
By equation \eqref{equivariance_of_tau}, the block $\mathcal{T}_{ij}$ satisfies the equation 
\begin{align}
\label{equivariance_on_blocks}
b_i\psi_i\mathcal{T}_{ij}=\mathcal{T}_{ij}c_j\phi_j.
\end{align}
We shall note that the block  $\mathcal{T}_{ij}$ is obtained from $\mathcal{T}$ by selecting rows between $1+\sum_{p<i}b_p\deg(\psi_p)$ and $\sum_{p<i+1}b_p\deg(\psi_p)$; columns between $1+\sum_{p<j}c_p\deg(\phi_p)$ and $\sum_{p<j+1}c_p\deg(\phi_p)$.

Subdividing $\mathcal{T}_{ij}$ into blocks with a number of rows equal to the degree of $\psi_i$ and the number of columns equal to the degree of $\phi_j$, the equation \eqref{equivariance_on_blocks} become
{\small
\begin{align}
\arraycolsep=0.1pt\def\arraystretch{1.3}
\left (
\begin{array}{r|r|r}
\psi_i^1 &        &  \\\hline
       &\ddots  &  \\\hline
       &        &\psi_i^{b_i} 
\end{array}
 \right)
\left(\begin{array}{r|r|r}
    T^{ij}_{11} & \ldots & T^{ij}_{1c_j}\\\hline
    T^{ij}_{21} & \ldots & T^{ij}_{2c_j}\\\hline
    \vdots & \ldots & \vdots\\\hline
   T^{ij}_{b_i1} & \ldots &  T^{ij}_{b_ic_j}\\
     \end{array}\right)
= 
\left(\begin{array}{r|r|r}
    T^{ij}_{11} & \ldots & T^{ij}_{1c_j}\\\hline
    T^{ij}_{21} & \ldots & T^{ij}_{2c_j}\\\hline
    \vdots & \ldots & \vdots\\\hline
   T^{ij}_{b_i1} & \ldots &  T^{ij}_{b_ic_j}\\
     \end{array}\right)
\left (
\begin{array}{r|r|r}
\phi_j^1 &        &  \\\hline
       &\ddots  &  \\\hline
       &        &\phi_j^{c_1} 
\end{array}
\right).
\label{matricial_equivariance}
\end{align}
}
which implies
\begin{align}
    \psi_i^{l}{T}^{ij}_{lm}={T}^{ij}_{lm}\phi_j^{m}.
\end{align}
By Schur's Lemma, if $\psi_i$ and $\phi_j$ are not equivalent, then each sub-block ${T}^{ij}_{lm}$ is zero. Hence $\mathcal{T}_{ij}$ is zero.$\blacksquare$ 
\end{proof}

In the notation of Lemma \ref{lemma_block_zero},
if $\psi_i$ and $\phi_j$ are equivalent, we can have a nonnull block $\mathcal{T}_{ij}$. This block can be further decomposed into equal sub-blocks as described below. See the following lemma.

\begin{lemma}
\label{lema_decompoe_mais}
In the notation of Lemma  \ref{lemma_block_zero}, if $\psi_i$ is equivalent to $\phi_j$, the block
$\mathcal{T}_{ij}$, can be decomposed into the form
{\small
\begin{align}
\label{form_repeting_blocks}
\mathcal{T}_{ij}=\left (
\begin{array}{r|r|r}
\mathfrak{t}_{ij} &        &  \\\hline
       &\ddots  &  \\\hline
       &        &\mathfrak{t}_{ij} 
\end{array}
\right),
\end{align}
}
with $\deg(\phi_j)$ equal blocks $\mathfrak{t}_{ij}$ of $b_i$ rows and $c_j$ columns. 
\end{lemma}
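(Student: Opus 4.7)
The plan is to combine a careful choice of symmetry-adapted basis within each isotypic component with Schur's Lemma. Recall that the blocks $c_j\phi_j$ and $b_i\psi_i$ in \eqref{Decomposing_cj_phij} arose from choosing some basis in each of the equivalent subspaces $V_j^m$ and $W_i^l$. I will first refine these bases so that (i) every $\phi_j^m$ equals a single fixed matrix representation $\phi_j$, (ii) every $\psi_i^l$ equals a single fixed matrix representation $\psi_i$, and (iii) using the hypothesis that $\psi_i$ and $\phi_j$ are equivalent, the matrices $\psi_i(g)$ and $\phi_j(g)$ coincide for every $g\in G$. Each refinement only replaces an old basis by its image under an intertwining isomorphism, so the block decomposition of Lemma \ref{lemma_block_zero} is preserved.

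With this choice, the equivariance identity \eqref{equivariance_on_blocks}, restricted to a sub-block, reads $\psi_i(g)\, T^{ij}_{lm} = T^{ij}_{lm}\, \psi_i(g)$ for every $g\in G$. Since $\psi_i$ is irreducible over $\mathbb{C}$, the standard corollary of Schur's Lemma (Theorem \ref{lema_de_schur}) forces $T^{ij}_{lm} = t_{lm}\, I_{d_j}$ for some scalar $t_{lm}\in\mathbb{C}$, where $d_j := \deg(\phi_j) = \deg(\psi_i)$.

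Collecting the scalars into the $b_i \times c_j$ matrix $\mathfrak{t}_{ij} = [t_{lm}]$, the block $\mathcal{T}_{ij}$ becomes the $b_i d_j \times c_j d_j$ matrix whose $(l,m)$-th sub-block is $t_{lm}\, I_{d_j}$. I would index its rows by pairs $(l,a)$ with $1\le l\le b_i$ and $1\le a\le d_j$, and its columns analogously by pairs $(m,b)$; the entry at position $((l,a),(m,b))$ is then $t_{lm}\delta_{ab}$. Permuting the row indices to the order $(a,l)$ and the column indices to $(b,m)$ places every nonzero entry into one of the $d_j$ diagonal blocks indexed by $a=b$, each of which is exactly $\mathfrak{t}_{ij}$; this is the form \eqref{form_repeting_blocks}.

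The main conceptual obstacle lies in the simultaneous basis refinement of step one: one must verify that items (i), (ii), and (iii) can be realized consistently, without breaking the isotypic block structure coming from \eqref{Decomposing_cj_phij}. Once that is secured, the remainder is a direct invocation of Schur's Lemma followed by a purely combinatorial reindexing of the basis.
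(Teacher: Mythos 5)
Your proof is correct and is essentially the argument the paper relies on: the paper itself omits the demonstration and defers to \cite{Stiefel_Fassler}, whose proof is exactly your combination of (a) choosing the symmetry-adapted bases (via the transferences $P^{j}_{1k}$) so that all equivalent irreducible constituents are realized by one and the same matrix representation, (b) the scalar form of Schur's Lemma over $\mathbb{C}$ forcing $T^{ij}_{lm}=t_{lm}I_{d_j}$, and (c) the reordering of basis vectors from the $\beta^i_j$ grouping to the $\tilde{\beta}^k_j$ grouping, which is precisely the reindexing $(l,a)\mapsto(a,l)$ you describe. The consistency of the basis refinement you worry about is guaranteed by the transference construction in the paper's Section on calculating the change of basis, so there is no gap.
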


We will omit the demonstration of Lemma \ref{lema_decompoe_mais}.
The details about the existence of the base giving the form at Lemmas \ref{lemma_block_zero} and \ref{lema_decompoe_mais}  are described in the reference \cite{Stiefel_Fassler}. 
In the next section, we will give a quick guide on how to build the change of variables.

\subsection{Calculating the change of basis}
Let $\chi_{\phi}(g)=trace({\phi}(g))$. The function $\chi:G\to \mathbb{C}$ is called the {\it character } of $\phi$. The inner product between characters is given by 
\begin{align}
\label{Produto_interno_dos_Carateres}
    ( \chi_1, \chi_2 )=\frac{1}{|G|}\sum_{g\in G} \chi_1(g^{-1})\cdot \chi_2(g).
\end{align}    
    The inner product $( \chi_{\phi}, \chi_i )$ is equal to the multiplicity $c_i$, such that one irreducible representation of character $\chi_i$ occurs as a subrepresentation of $\phi$. In particular, given two irreducible representations of a group $G$, they are equivalent if, and only if, their inner product is 1. Therefore, the character provides a way to identify irreducible representations. The character of an irreducible representation is called an {\it irreducible character}.
    
    Consider a finite group $G$; it is possible to construct the complete set $\{\chi_1,\ldots,\chi_k\}$
    of irreducible characters.
  The projection of the representation $\phi$ over the isotopic component $V_j$ given in equation \eqref{isotypic_component} is 
\begin{align}
P_j=\sum_{g \in G} \chi_j(g^{-1})\phi(g).
\label{isotypicDecomposition}
\end{align}

Suppose a complete set of nonequivalent irreducible representations of a group  it is given in a matrix form. We can use these matrices to further decompose $V_j$ into its irreducible components $V_j^{l}, l=1,\ldots,c_j$ like in \eqref{isotypic_component}. If $\delta_j$ is a representation equivalent to $\phi_j,$ and  it is given in matrix form by $\delta_j(g)=(d^j_{ki}(g))$, and let $n_j$ be the degree of  $\phi_j$, define the operators
\begin{align}
P^{j}_{ki}=\frac{n_j}{|G|}\sum_{g \in G} d^{j}_{ki}(g^{-1})\phi(g). \label{transferences}
\end{align}

The linear operator $P^j_{ki}$ is zero on $V_l$ for $l \neq j$ and defines an isomorphism between 
$V_j^k$ and $V_j^i$. The operators $P^j_{ki}$ are sometimes called {\it transferences}.
 Using the transferences, we will construct the so-called {\it symmetry-adapted basis}.
 One can fully decompose the representation in its irreducible components with the symmetry-adapted basis.
Also, if an equivariant operator exists, one can decompose into blocks by reordering this basis.

 To find a symmetry-adapted basis, follow the steps:
 \begin{itemize}
    \item[i)]     The operator $P^j_{11}$ has  a rank equal to
    the multiplicity $c_j$. Choose a basis $\beta_j^1=\{v_1^1,v_1^2,\ldots,v_1^{c_j}\}$ of its image.
    \item[ii)] For $k=2,\ldots,n_j$, and $i=1,2,\ldots,c_j$ calculate the vectors
    \begin{align}
      \label{Equation_for_transferences}  v_k^i=P^j_{1k}v_1^i.
    \end{align}
     The set
    $\beta_j^i=\{v_1^i,v_2^i,\ldots,v_{n_j}^{i}\}$ is a ordered basis of $V_j^i$, for $i=1,2,\ldots,c_j$.
    \item[iii)] By juxtaposing the basis of the previous step, we obtain $\beta_j=\beta_j^1\cup \beta_j^2\cup\ldots\cup \beta_j^{c_j}$, a basis for the isotypic component $V_j$.  Let $\beta=\beta_1\cup\beta_2\cup\ldots\cup\beta_k$. The matrix presentations of $\phi$  in the basis $\beta$ have the block diagonal form 
        
    \begin{align}
\phi^{\beta}=\begin{pmatrix}
   c_1\phi_1 & & \\
    & \ddots & \\
     & & c_n\phi_n 
        \end{pmatrix}.
    \end{align}
       \end{itemize}
The basis that decomposes an equivariant operator in the form alluded to in Lemmas \ref{lemma_block_zero} and \ref{lema_decompoe_mais} is the same basis presented here but in a different order. The order basis for the equivariant operator is 
\begin{align}
\tilde{\beta}_{j}^{k}=\{v_k^1, v_k^2, \ldots, v_k^{c_j}\}, \quad k=1,\ldots,n_j. 
\end{align}
Therefore $\beta^{i}_j$ is a basis for the $n_j$-dimensional subspace $V^i_j$, for $i=1,\ldots,c_j$. On the other hand, $\tilde{\beta}^{k}_j$ is a basis for the $c_j$-dimensional subspace $U^k_j$ for $k=1,\ldots,n_j$. In any case, $V_j=\bigoplus_{l=1}^{c_j} V_{j}^{l}=\bigoplus_{k=1}^{n_j} U_{j}^{k}$. When choosing the subspaces $U_{j}^{k}$ the restrictions of an equivariant operator is zero when the subrepresentations are not equivalent or have the form of Lemma \ref{lema_decompoe_mais} when they are equivalent.
In Figure \ref{order_basis_diagram} we give a schematic representation of the order of the bases(inspired by the reference \cite{Stiefel_Fassler}).
\begin{figure}[H]
\begin{tikzpicture}[descr/.style={fill=white,inner sep=2pt}]
        \matrix (m) [
            matrix of math nodes,
            row sep=1em,
            column sep=1.5em,
            text height=1.5ex, text depth=0.25ex
        ]
        { V_j^{1} & v_1^1 & v_2^1  & \ldots & v_{n_j}^1 &  & v_1^1 & v_2^1  & \ldots & v_{n_j}^1 \\
           V_j^{2} & v_1^2 & v_2^2  & \ldots & v_{n_j}^2 &  & v_1^2 & v_2^2  & \ldots & v_{n_j}^2 \\
                   &\vdots&        &   & \vdots & &   &        &  & \mbox{}      \\
           V_j^{c_j} & v_1^{c_j} & v_2^{c_j}  & \ldots & v_{n_j}^{c_j} & &  v_1^{c_j} & v_2^{c_j}  & \ldots & v_{n_j}^{c_j} \\
           \mbox{} & \mbox{} & \mbox{}  & \mbox{} & \mbox{} & &  U_j^{1} & U_j^{2}  & \ldots & U_j^{n_j} \\
        };
        \path[overlay,->, font=\scriptsize,>=latex]
        (m-1-2) edge (m-1-3)
        (m-1-3) edge (m-1-4)
        (m-1-4) edge  (m-1-5)
        (m-1-5) edge[out=255,in=055]  (m-2-2)
        (m-2-2) edge (m-2-3)
        (m-2-3) edge (m-2-4)
        (m-2-4) edge (m-2-5)
        (m-2-5) edge[out=255,in=055]  (m-3-2)
        (m-4-2) edge (m-4-3)
        (m-4-3) edge (m-4-4)
        (m-4-4) edge (m-4-5);
              \path[->, font=\small,>=latex]
        (m-1-7) edge[dashed] (m-2-7); 

         \path[->, font=\small,>=latex]
        (m-2-7) edge[dashed] (m-4-7); 

         \path[->, font=\small,>=latex]
        (m-4-7) edge[dashed][out=355,in=175] (m-1-8); 
         \path[->, font=\small,>=latex]
        (m-1-8) edge[dashed] (m-2-8); 

         \path[->, font=\small,>=latex]
        (m-2-8) edge[dashed] (m-4-8); 

         \path[->, font=\small,>=latex]
        (m-4-8) edge[dashed][out=355,in=175] (m-1-9);
        \path[->, font=\small,>=latex]
        (m-4-9) edge[dashed][out=355,in=175] (m-1-10); 
         \path[->, font=\small,>=latex]
        (m-1-10) edge[dashed] (m-2-10); 

        \path[->, font=\small,>=latex]
        (m-2-10) edge[dashed] (m-4-10);

\end{tikzpicture}
     \caption{The left diagram shows the order to obtain the decomposition of the representation by the subspaces $V_j^i$. The right diagram shows the order to obtain the decomposition of the equivariant operator by the subspaces $U_j^k$.}
    \label{order_basis_diagram}
\end{figure}
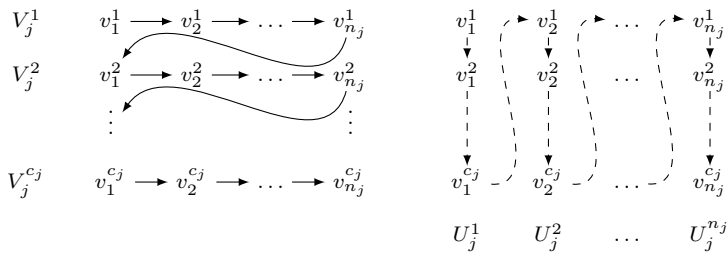

\section{Applications: The inverse and direct problem for central configurations of two nested tetrahedron, two nested octahedron}
\label{Applications: The inverse and direct problem for central configurations of two nested tetrahedron, two nested octahedron}
  
To find the kernel of $\tau$ with $\mathcal{T}=Q^{-1}\tau P$, find
 the vector solution for the kernel of $\mathcal{T}$ and use the entries as  the coordinates for the ordered basis whose vectors are the columns of $P$. If $\mathcal{T}$ has a blockwise decomposition, the systems to solve are smaller and can be solved for each block. Using these tools, we will find the kernel of the matrix $S$ in \eqref{matrixS}, which defines the central configurations equations.

For simplicity of notation, we will denote equivalent subrepresentations with the same index. So $\theta_i$ will be equivalent to $(\theta\otimes\rho)_i$, and as a consequence, a nonnull block $\mathcal{T}_{ij}$ of the equivariant operator will have $i=j$, we will index these blocks with just one index. Furthermore, this index will be chosen according to the description of the set of irreducible sub-representations associated with the group.

After the change of variables given by the symmetry-adapted basis, we analyze the blocks transforming the expressions in rational functions. This analysis allows us to know the coordinates of the mass vector relative to the new basis. Hence, we determine the possible masses. Also, it is possible to determine relations between the masses and the size of the edges of the polyhedra.

We present the case of nested tetrahedron  in detail, with the calculations and theorems. 
The proofs for the other cases are very similar. Hence we will omit the repetition of some arguments when there is no loss of clarity. Also, as the results are very similar in all cases, we will summarize the discussion of this section in some general theorems at the final of this section.

In general, we can perform numerical calculations to speculate about the structure of solutions. However, doing exact calculations through symbolic computations has a much higher computational cost, which can become an impediment in many cases. In Table \ref{Order_of_groups_and_matrices}, we present the orders of matrices involved in calculations.
\begin{table}

\begin{center}

\begin{tabular}{l|c|c|c|c|c}
& \text{Group} & \text{Group order} & \text{Order of $S$} & \text{Degree of $\theta$} & \text{Degree of $\theta\otimes\rho$} \\[2pt]\hline
\text{Tetrahedron} & $S_4$ & $24$ &  $24\times 8$ & $8$ & $24$ \\\hline
 \text{Octahedron} & $O_h$ & $48$ &   $36\times 12$ & $12$ & $36$    \\\hline
\text{Cube} & $O_h$ & $48$ & $48\times 16$ & $16$  & $48$ \\ \hline
\end{tabular}
\end{center}
\caption{$S$ represents the matrix defining the central configuration equations. The order of the group is the number of matrices we must calculate to do the change of basis. The degree is the order of these matrices.}
\label{Order_of_groups_and_matrices}
\end{table}

We consider central configurations of two nested platonic polyhedra.
Let us assume that one polyhedron has vertices $q_j$, for $j=1,\ldots,n$, and the other has vertices $q_{j+n}=tq_j$ where $t \in \mathbb{R}$ and $j=1,\ldots,n$. So, $t$ is the ratio between the edges of the two polyhedra. For simplicity, we only give the positions $q_1,\ldots, q_n$ in what follows.
The equations  \eqref{EquacaoDeConfiguracaoCentralNewDef} are invariant by dilatation, so we can assume without loss of generality that  $t\in (0,1)$. So the positions $q_1,\ldots,q_n$ are vertices for the outer polyhedron.

\subsection{Nested Tetrahedra}
In this subsection, we reobtain the results of \cite{ZHU}, concerning homothetic tetrahedra(see Theorem 3 in that reference). Also, we use this case to explain the method, giving many details in the calculations.

Let the positions of the outer regular tetrahedron be $q_1=(1,1,1)^T,$ $q_2=(-1,-1,1)^T,$ $q_3=(-1,1,-1)^T,$ $q_4=(1,-1,-1)^T$. The group of symmetries of the tetrahedron is $S_4$(the symmetric group of four elements).
In Table \ref{Representacoes_irredutiveis_S4}, the generators of this group appear in the first row. The irreducible nonequivalent representations appear in the first column. These irreducible nonequivalent subrepresentations are available in the literature or can be accessed via software like GAP(see \cite{GAP4}).
\begin{table}
\begin{center}
\caption{Irreducible subrepresentations of $S_4$}
\label{Representacoes_irredutiveis_S4}
\begin{tabular}{l|c|c|c}
& $(3,4)(7,8)$ & $(2,3)(6,7)$ & $(1,2)(3,4)(5,6)(7,8)$\rule{0pt}{8pt} \\[2pt]\hline
$r_1$ & $\begin{pmatrix}
1 
\end{pmatrix}$ & $\begin{pmatrix}
1 
\end{pmatrix}$ &  $\begin{pmatrix}
1 
\end{pmatrix}$ \rule{0pt}{10pt} \\[2pt]\hline
 $r_2$ & $\begin{pmatrix}
-1 
\end{pmatrix}$ & $\begin{pmatrix}
-1 
\end{pmatrix}$ &  $\begin{pmatrix}
1 
\end{pmatrix}$ \rule{0pt}{10pt}\\[3pt]\hline
$r_3$ & \scriptsize{$\begin{pmatrix}
0 &1 \\
1 & 0
\end{pmatrix}$} & \scriptsize{$\begin{pmatrix}
0 &-\frac{1}{2}+\frac{\sqrt{3}i}{2} \\
-\frac{1}{2}-\frac{\sqrt{3}i}{2} & 0
\end{pmatrix}$}  &  \scriptsize{$\begin{pmatrix}
1 & 0\\
0 & 1
\end{pmatrix}$}\rule{0pt}{16pt} \\[10pt]\hline
$r_4$ & \scriptsize{$\begin{pmatrix}
0 & 0 & 1 \\
0 & 1 & 0 \\
1 & 0 & 0
\end{pmatrix}$} & \scriptsize{$\begin{pmatrix}
1 & 0 & 0 \\
0 & 0 & 1 \\
0 & 1 & 0
\end{pmatrix}$} &  \scriptsize{$\begin{pmatrix}
-1 & 0 & 0 \\
0 & 1 & 0 \\
0 & 0 & -1
\end{pmatrix}$} \rule{0pt}{18pt} \\[12pt]\hline
$r_5$ & \scriptsize{$\begin{pmatrix}
0 & 0 & -1 \\
0 & -1 & 0 \\
-1 & 0 & 0
\end{pmatrix}$} & \scriptsize{$\begin{pmatrix}
-1 & 0 & 0 \\
0 & 0 & -1 \\
0 & -1 & 0
\end{pmatrix}$} &  \scriptsize{$\begin{pmatrix}
-1 & 0 & 0 \\
0 & 1 & 0 \\
0 & 0 & -1
\end{pmatrix}$} \rule{0pt}{19pt}
\end{tabular}
\end{center}
\end{table}

\subsubsection{Obtaining the block decomposition}
The representation $\theta:S_4 \rightarrow GL(\mathbb{C}^8)$
is explicitly calculated at the group generators in Table \ref{Tabela_theta_Tetraedro}.
\begin{table}
\caption{Representation $\theta$ for the tetrahedron case}
\label{Tabela_theta_Tetraedro}
\begin{center}
\begin{tabular}{l|c|c|c}
& $(3,4)(7,8)$ & $(2,3)(6,7)$ & $(1,2)(3,4)(5,6)(7,8)$\rule{0pt}{8pt} \\[2pt]\hline
$\theta$ & \tiny{$\left(\begin{array}{rrrrrrrr}
1 & 0 & 0 & 0 & 0 & 0 & 0 & 0 \\
0 & 1 & 0 & 0 & 0 & 0 & 0 & 0 \\
0 & 0 & 0 & 1 & 0 & 0 & 0 & 0 \\
0 & 0 & 1 & 0 & 0 & 0 & 0 & 0 \\
0 & 0 & 0 & 0 & 1 & 0 & 0 & 0 \\
0 & 0 & 0 & 0 & 0 & 1 & 0 & 0 \\
0 & 0 & 0 & 0 & 0 & 0 & 0 & 1 \\
0 & 0 & 0 & 0 & 0 & 0 & 1 & 0
\end{array}\right)$} & \tiny{$\left(\begin{array}{rrrrrrrr}
1 & 0 & 0 & 0 & 0 & 0 & 0 & 0 \\
0 & 0 & 1 & 0 & 0 & 0 & 0 & 0 \\
0 & 1 & 0 & 0 & 0 & 0 & 0 & 0 \\
0 & 0 & 0 & 1 & 0 & 0 & 0 & 0 \\
0 & 0 & 0 & 0 & 1 & 0 & 0 & 0 \\
0 & 0 & 0 & 0 & 0 & 0 & 1 & 0 \\
0 & 0 & 0 & 0 & 0 & 1 & 0 & 0 \\
0 & 0 & 0 & 0 & 0 & 0 & 0 & 1
\end{array}\right)$} &  \tiny{$\left(\begin{array}{rrrrrrrr}
0 & 1 & 0 & 0 & 0 & 0 & 0 & 0 \\
1 & 0 & 0 & 0 & 0 & 0 & 0 & 0 \\
0 & 0 & 0 & 1 & 0 & 0 & 0 & 0 \\
0 & 0 & 1 & 0 & 0 & 0 & 0 & 0 \\
0 & 0 & 0 & 0 & 0 & 1 & 0 & 0 \\
0 & 0 & 0 & 0 & 1 & 0 & 0 & 0 \\
0 & 0 & 0 & 0 & 0 & 0 & 0 & 1 \\
0 & 0 & 0 & 0 & 0 & 0 & 1 & 0
\end{array}\right)$} \rule{0pt}{33pt} 
\end{tabular}
\end{center}
\end{table}To decompose this representation, we first apply the formula \ref{Produto_interno_dos_Carateres} to obtain the multiplicities. We get
$(\theta,r_i)=2$  for $i=1,4,$ and zero otherwise. The decomposition into irreducible
subrepresentations is  
\begin{align}
\label{Theta_decomposition_Tetrahedron}
 \theta&=2\theta_1\oplus 2\theta_4,
\end{align}
where the index $i$ implies equivalence to $r_i$ in Table \ref{Representacoes_irredutiveis_S4}.

We calculate the symmetry-adapted basis using the \textit{transferences} in formula\eqref{transferences}, with for $\phi=\theta$, and
$d^{j}_{ki}$ is given by Table \ref{Representacoes_irredutiveis_S4}. More details about the calculation are given below.

First, we use in Table \ref{Representacoes_irredutiveis_S4} to obtain $d^{1}_{11}$ the entry of $r_1$, next we calculate $P^1_{11}$ and obtain
\begin{align}
    P^{1}_{11}=\left(\begin{array}{rrrrrrrr}
6 & 6 & 6 & 6 & 0 & 0 & 0 & 0 \\
6 & 6 & 6 & 6 & 0 & 0 & 0 & 0 \\
6 & 6 & 6 & 6 & 0 & 0 & 0 & 0 \\
6 & 6 & 6 & 6 & 0 & 0 & 0 & 0 \\
0 & 0 & 0 & 0 & 6 & 6 & 6 & 6 \\
0 & 0 & 0 & 0 & 6 & 6 & 6 & 6 \\
0 & 0 & 0 & 0 & 6 & 6 & 6 & 6 \\
0 & 0 & 0 & 0 & 6 & 6 & 6 & 6
\end{array}\right).
\end{align}

The subspace $V_{1}$ has dimension $2$, equal to the multiplicity times the degree of $\theta_1$.  We choose the first and fifth columns of  $P^{1}_{11}$to be a base of $U^{1}_{1}$. As the degree is $1$, no other transferences are needed.

To find the decomposition relative to component irreducible $\theta_4$,
we choose $d^{4}_{11}$  the correspondent entry in $r_4$, after calculating $P^{4}_{11},$ we obtain 
\begin{align}
P^{4}_{11}=\left(\begin{array}{rrrrrrrr}
2 & -2 & -2 & 2 & 0 & 0 & 0 & 0 \\
-2 & 2 & 2 & -2 & 0 & 0 & 0 & 0 \\
-2 & 2 & 2 & -2 & 0 & 0 & 0 & 0 \\
2 & -2 & -2 & 2 & 0 & 0 & 0 & 0 \\
0 & 0 & 0 & 0 & 2 & -2 & -2 & 2 \\
0 & 0 & 0 & 0 & -2 & 2 & 2 & -2 \\
0 & 0 & 0 & 0 & -2 & 2 & 2 & -2 \\
0 & 0 & 0 & 0 & 2 & -2 & -2 & 2
\end{array}\right).
\end{align}
Again, by choosing the first and fifth columns, we have a basis for $U_4^{1}$. 
We label these vectors, respectively,
$v_1^1$ and $v_1^2$.

For the transferences $P^4_{12}$ and $P^4_{13},$ we have 
\begin{align}
 P^{4}_{12}=\left(\begin{array}{rrrrrrrr}
2 & -2 & -2 & 2 & 0 & 0 & 0 & 0 \\
2 & -2 & -2 & 2 & 0 & 0 & 0 & 0 \\
-2 & 2 & 2 & -2 & 0 & 0 & 0 & 0 \\
-2 & 2 & 2 & -2 & 0 & 0 & 0 & 0 \\
0 & 0 & 0 & 0 & 2 & -2 & -2 & 2 \\
0 & 0 & 0 & 0 & 2 & -2 & -2 & 2 \\
0 & 0 & 0 & 0 & -2 & 2 & 2 & -2 \\
0 & 0 & 0 & 0 & -2 & 2 & 2 & -2
\end{array}\right)
,\quad    P^{4}_{13}=\left(\begin{array}{rrrrrrrr}
2 & -2 & -2 & 2 & 0 & 0 & 0 & 0 \\
-2 & 2 & 2 & -2 & 0 & 0 & 0 & 0 \\
2 & -2 & -2 & 2 & 0 & 0 & 0 & 0 \\
-2 & 2 & 2 & -2 & 0 & 0 & 0 & 0 \\
0 & 0 & 0 & 0 & 2 & -2 & -2 & 2 \\
0 & 0 & 0 & 0 & -2 & 2 & 2 & -2 \\
0 & 0 & 0 & 0 & 2 & -2 & -2 & 2 \\
0 & 0 & 0 & 0 & -2 & 2 & 2 & -2
\end{array}\right).
\end{align}
Then applying the formula \eqref{Equation_for_transferences} 
we obtain
 $\{v_2^1=P_{12}^4v_1^1, 
v_2^2=P_{12}^4v_1^2\}$ a base for $U_4^{2}$.
Similarly, $\{v_3^1=P_{13}^4v_1^1, v_3^2=P_{13}^4v_1^2\}$ is a base for $U_4^{3}$.

The 
change of the basis matrix is  
\begin{align}
\label{Matrix_p_tetraedro}
P=\left(\begin{array}{rr|rr|rr|rr}
6 & 0 & 2 & 0 & 16 & 0 & 16 & 0 \\
6 & 0 & -2 & 0 & 16 & 0 & -16 & 0 \\
6 & 0 & -2 & 0 & -16 & 0 & 16 & 0 \\
6 & 0 & 2 & 0 & -16 & 0 & -16 & 0 \\
0 & 6 & 0 & 2 & 0 & 16 & 0 & 16 \\
0 & 6 & 0 & -2 & 0 & 16 & 0 & -16 \\
0 & 6 & 0 & -2 & 0 & -16 & 0 & 16 \\
0 & 6 & 0 & 2 & 0 & -16 & 0 & -16
\end{array}\right).
\end{align}
The subdivisions indicate the bases of subspaces $U_1^1, U_4^1,  U_4^2,$ and $
U_4^3$.
The Table \ref{Irreducible subrepresentions of 1 and 4 $S_4$}  shows the matricial expression of $\theta_1$ and $\theta_4$ after the change of basis
given by the correspondents subspaces $V_j^i$, obtained from $U_j^{k}$.
\begin{table}
\begin{center}
\caption{Irreducible subrepresentations of $\theta_1$ and $\theta_4$.}
\label{Irreducible subrepresentions of 1 and 4 $S_4$}
\begin{tabular}{l|c|c|c}
& $(3,4)(7,8)$ & $(2,3)(6,7)$ & $(1,2)(3,4)(5,6)(7,8)$\rule{0pt}{8pt} \\[2pt]\hline
$\theta_1$ & $\begin{pmatrix}
1 
\end{pmatrix}$ & $\begin{pmatrix}
1 
\end{pmatrix}$ &  $\begin{pmatrix}
1 
\end{pmatrix}$ \rule{0pt}{10pt} \\[2pt]\hline
 $\theta_4$ & \scriptsize{$\left(\begin{array}{rrr}
0 & 0 & 8 \\
0 & 1 & 0 \\
\frac{1}{8} & 0 & 0
\end{array}\right)$} & \scriptsize{$\begin{pmatrix}
1 & 0 & 0 \\
0 & 0 & 1 \\
0 & 1 & 0
\end{pmatrix}$} &  \scriptsize{$\begin{pmatrix}
-1 & 0 & 0 \\
0 & 1 & 0 \\
0 & 0 & -1
\end{pmatrix}$} \rule{0pt}{16pt} 
\end{tabular}
\end{center}
\end{table}

Similarly, the representation $\theta\otimes\rho:S_4 \rightarrow GL(\mathbb{C}^{24})$ decomposes as 
\begin{align}
  \theta\otimes\rho&=2(\theta\otimes\rho)_1\oplus 2(\theta\otimes\rho)_3\oplus 4(\theta\otimes\rho)_4\oplus2(\theta\otimes\rho)_5.
\end{align}
Generating the change of basis
\begin{align}
\label{Matrix_Q_TetrahedronIII}
 Q = \left(\begin{array}{cccccccccccccccccccccccc}
        c_1 & \textbf{0} & c_2 & \textbf{0} & c_3 & \textbf{0} & c_4 & c_ 5 & \textbf{0} & \textbf{0} & c_6 & c_ 7 & \textbf{0} & \textbf{0} & c_8 & c_ 9 & \textbf{0} & \textbf{0} & c_{10} & \textbf{0} & c_{11} & \textbf{0} & c_{12} &  \textbf{0} \\ \hline
         \textbf{0} &c_1 & \textbf{0} & c_2 & \textbf{0} & c_3 & \textbf{0} & \textbf{0} & c_4 & c_ 5 & \textbf{0} & \textbf{0} & c_6 & c_ 7 & \textbf{0} & \textbf{0} & c_8 & c_ 9 & \textbf{0} &   c_{10} & \textbf{0} & c_{11} & \textbf{0} & c_{12} 
        \end{array}\right).
\end{align}
Where $c_i$ be the i-th column of $\tilde{Q}$ in \ref{Matrix_Q_TetrahedronII_rascunho}, and $\textbf{0}$ be the zero column. With
\begin{align}
     \label{Matrix_Q_TetrahedronII_rascunho}
\Tilde{Q}=\left(\begin{array}{rrrrrrrrrrrr}
2    & 1                      &   12                       & 1  & 0 &   0  & 16 &   8  & 0  &   1  &   0  &   -8   \\
2    & -z                     &   w                        & 1  & 0 &   8  & 0  &   0  & 16 &   -1 &   8  &   0    \\
2    & -\mathit{\overline{z}} &   \mathit{\overline{w}}    & 0  & 2 &   8  & 0  &   8  & 0  &   0  &   -8 &   8    \\
2    & 1                      &   12                       & -1 & 0 &   0  & 16 &   -8 & 0  &   -1 &   0  &   8    \\
-2   & z                      &   -w                       & 1  & 0 &   -8 & 0  &   0  & 16 &   -1 &   -8 &   0    \\
-2   & \mathit{\overline{z}}  &   -\mathit{\overline{w}}   & 0  & 2 &   -8 & 0  &   8  & 0  &   0  &   8  &   8    \\
-2   &  -1                    &   -12                      & 1  & 0 &   0  & 16 &   -8 & 0  &   1  &   0  &   8    \\
2    & -z                     &   w                        & -1 & 0 &   -8 & 0  &   0  & 16 &   1  &   -8 &   0    \\
-2   & \mathit{\overline{z}}  &   -\mathit{\overline{w}}   & 0  & 2 &   8  & 0  &   -8 & 0  &   0  &   -8 &   -8   \\
-2   &  -1                    &   -12                      & -1 & 0 &   0  & 16 &   8  & 0  &   -1 &   0  &   -8   \\
-2   & z                      &   -w                       & -1 & 0 &   8  & 0  &   0  & 16 &   1  &   8  &   0    \\
2    & -\mathit{\overline{z}} &   \mathit{\overline{w}}    & 0  & 2 &   -8 & 0  &   -8 & 0  &   0  &   8  &   -8   
\end{array}\right),
 \end{align}
and  $z = \frac{1}{2}+\frac{\sqrt{3}}{2} i$ e $w = 6+6\sqrt{3}i$.
 Although $Q^{-1}$ is not real, $Q^{-1}\cdot S\cdot P$ is real. To see this
let $Q^{-1}=A+ iB$, with $A$ and $B$ are real matrices, and verify that $B\cdot S\cdot P$ is the null matrix. 
We describe the structure of  $S$ after the change of basis, considering the results of Lemmas \ref{lemma_block_zero} and \ref{lema_decompoe_mais}.
\begin{itemize}
    \item[$\bullet$]$\theta_1$ is equivalent to $(\theta\otimes\rho)_1$ with degree $1,$ so we have a block with just one copy.  
    Both subrepresentations have multiplicity equal to $2$, 
therefore the block is $2\times 2$. We will refer to this block as $T_1$(after tetrahedron) and analyzing in Subsection \ref{subsec:block1_tetahedron}.
    \item[$\bullet$] $\theta_4$ is equivalent to $(\theta\otimes\rho)_4$ with degree $3$, therefore, generating three identical blocks $\mathfrak{t}_{44}$.
    The size of the blocks is
    $4\times 2$  given by the multiplicities. We will refer to this block $\mathfrak{t}_{44}$ as $\mathfrak{t}_4$ and analyze it in Subsection \ref{subsec:block4_tetahedron}.
\end{itemize}
As we have no other equivalences, the structure of the matrix $S$ after the change of basis is given by
 {\small
 \begin{equation}
    Q^{-1}SP=\left( \begin{array}{c|c|c|c}
T_1 &  & & \\\hline
    &  & & \\\hline
& \mathfrak{t}_4 &  &  \\\hline
& & \mathfrak{t}_4 &  \\\hline
& &  & \mathfrak{t}_4 \\\hline
   &  & & \end{array}\right).
 \end{equation}
}
The position of blocks is predicted by Lemma \ref{lemma_block_zero}.
The columns of $P$ form the new basis to express the mass vector.
The solutions for the blocks' equations give us the mass vector coordinates. Block $T_1$ determines the coefficients for the first and second vectors. Block $\mathfrak{t}_4$(the three copies) determines the coefficients for columns 3 to 8.
\subsubsection{Prerequisites for block analysis-Reparametrization}
\label{Prerequisites for block analysis-Reparametrization}
The decomposition of an equivariant map reduces the system into subsystems of smaller size. However, in our case, it is convenient to  use more tools from algebra to study these subsystems.

A difficulty appearing in many dynamic system problems is studying expressions involving square roots. A way to perform the calculations is  to find a  convenient substitution to transform the expression into a rational function. This approach was already used  in \cite{Leandro_Finiteness_Bifurcation} and \cite{RAMIREZ_GASULL_Llibre}. Another work discussing some techniques for this approach is a paper by Gasull, Lázaro, and Torregrosa(\cite{Gasull_LAzaro_Torregrosa}). 

In our case, many expressions have denominators
of the form $|q_k-tq_l|=\sqrt{|q_k|^2-2t\langle q_k,q_l\rangle+t^2|q_l|^2}$.
The quadratic polynomial inside the radical has a non-positive discriminant $\Delta$.
 In the case of $\Delta=0$, the expression is already a polynomial. Otherwise we obtained rational expressions by performing a reparameterization 
inspired by the work of Leandro \cite{Leandro_Finiteness_Bifurcation}.
\begin{lemma}
\label{parametrizando_uma_expressao}
Consider an expression
$
\label{expressao_expoente_3_sobre_2}
  r(t)=\frac{n(t)}{(at^2+bt+c)^\frac{3}{2}},$
where $n(t)$ is a rational function and $at^2+bt+c$ is a quadratic polynomial of negative discriminant $\Delta$. Let $(c(u),s(u))= \Big(\frac{1}{2}(u+\frac{1}{u}),\frac{1}{2}(u-\frac{1}{u})\Big),
$ 
and
{\small
\begin{align}
\label{building_blocks}
\kappa(u)=\frac{\sqrt{-\Delta}}{2a}s(u)-\frac{b}{2a}.
 \end{align} }
 Then
 $
r(\kappa(u))=\frac{n(\kappa(u))}{(\frac{-\Delta}{4a})^\frac{3}{2}(c(u))^3}  
$
 is  a rational expression.   Furthermore, the map $\kappa:\Big(\frac{b+\sqrt{b^2-\Delta}}{\sqrt{-\Delta}},\frac{b+2a+\sqrt{(b+2a)^2-\Delta}}{\sqrt{-\Delta}}\Big)\to (0,1)$ is a bijection.
 
 \end{lemma}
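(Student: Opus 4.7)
The plan is to verify the rationality claim by completing the square in $at^2+bt+c$ and exploiting the identity $c(u)^2-s(u)^2=1$, and then to check the bijection claim by noting that $\kappa$ is monotone on $u>0$ and computing the preimages of $0$ and $1$ explicitly.

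First I would rewrite $at^2+bt+c=a\bigl(t+\tfrac{b}{2a}\bigr)^2-\tfrac{\Delta}{4a}$. Under the substitution $t=\kappa(u)$, the linear shift cancels: $\kappa(u)+\tfrac{b}{2a}=\tfrac{\sqrt{-\Delta}}{2a}s(u)$. Plugging in gives
\begin{equation*}
a\kappa(u)^2+b\kappa(u)+c=a\cdot\frac{-\Delta}{4a^2}s(u)^2-\frac{\Delta}{4a}=\frac{-\Delta}{4a}\bigl(s(u)^2+1\bigr)=\frac{-\Delta}{4a}\,c(u)^2,
\end{equation*}
using the elementary identity $s(u)^2+1=c(u)^2$. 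Since $\Delta<0$ forces $a>0$ (otherwise $at^2+bt+c$ would be strictly negative and the $3/2$-power not real), the scalar $\tfrac{-\Delta}{4a}$ is positive, and for $u>0$ we have $c(u)=\tfrac{u^2+1}{2u}>0$, so we may take the $3/2$-power without ambiguity to conclude $(a\kappa(u)^2+b\kappa(u)+c)^{3/2}=\bigl(\tfrac{-\Delta}{4a}\bigr)^{3/2}c(u)^3$. Since $\kappa(u)$ and $c(u)$ are both rational in $u$, and $n$ is assumed rational, the stated formula for $r(\kappa(u))$ is a rational function of $u$.

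For the bijection, I would differentiate: $\kappa'(u)=\tfrac{\sqrt{-\Delta}}{2a}\cdot\tfrac12\bigl(1+\tfrac{1}{u^2}\bigr)>0$ for all $u>0$, so $\kappa$ is strictly increasing on $(0,\infty)$. To locate the endpoints of the preimage of $(0,1)$, I would solve $\kappa(u)=0$ and $\kappa(u)=1$. The equation $\kappa(u)=y$ reduces, via $s(u)=\tfrac{2ay+b}{\sqrt{-\Delta}}$, to the quadratic $u^2-\tfrac{2(2ay+b)}{\sqrt{-\Delta}}u-1=0$, whose unique positive root is $u=\tfrac{(2ay+b)+\sqrt{(2ay+b)^2-\Delta}}{\sqrt{-\Delta}}$ (the discriminant is manifestly positive since $-\Delta>0$, and the other root is negative because the constant term is $-1$). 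Substituting $y=0$ and $y=1$ recovers exactly the endpoints $\alpha=\tfrac{b+\sqrt{b^2-\Delta}}{\sqrt{-\Delta}}$ and $\beta=\tfrac{b+2a+\sqrt{(b+2a)^2-\Delta}}{\sqrt{-\Delta}}$ given in the statement. Combined with strict monotonicity, this yields the claimed bijection $\kappa\colon(\alpha,\beta)\to(0,1)$.

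There is no genuine obstacle here: the argument is essentially a single completing-the-square computation plus the identity $c^2-s^2=1$. The most delicate point is sign bookkeeping—justifying $a>0$ from the hypotheses so that the square roots are legitimate, and choosing the positive root of the quadratic when inverting $\kappa$. Once those are in place, rationality and bijectivity both fall out cleanly.
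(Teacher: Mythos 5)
Your proposal is correct and follows essentially the same route as the paper's (much terser) proof: complete the square, use $c(u)^2-s(u)^2=1$ to turn the radical into $\bigl(\tfrac{-\Delta}{4a}\bigr)^{3/2}c(u)^3$, and obtain the bijection from the monotonicity of $s$ (equivalently $\kappa$) on $(0,+\infty)$ by restricting to the preimage of $(0,1)$. You merely make explicit the details the paper calls a ``straightforward computation,'' including the sign condition $a>0$ and the exact preimages of $0$ and $1$.
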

\begin{proof}
 The identity  for $r(\kappa(u))$ is a straightforward computation. Moreover, it is easy to see that the expression indeed is rational. As $s:(0,+\infty)\to\mathbb{R}$ is a bijection, then $\kappa$ is also a bijection.  We obtain the desired result by restricting the domain of $\kappa$ to the preimage of $(0,1)$.
\end{proof}

After using the parameterization, we obtain a one-variable polynomial, this allows us to use classical results, as listed below. 
\begin{dfn}
The number of variations of signs in the sequence of real numbers $a_{0}, a_1, \ldots, a_n$  is the number of pairs $(i,j)$ with $i<j$, $a_ia_j<0$, and $a_{i+1}=\cdots = a_{j-1}=0$.
\end{dfn}

\begin{dfn}
Let $p(x)$ be a  real polynomial of degree $n$. Denote $p_0=p(x)$ and by $p_1$ its derivative. The  Sturm sequence of $p(x)$ is the sequence polynomial $p_0, p_1, \ldots, p_n$, where each $p_{i+2}$ is the negative of the remainder of the polynomial division of $p_i$ by $p_{i+1}$.
\end{dfn}
Let $\alpha$ be any real number and $p_0, p_1, \ldots, p_n$ the sequence of Sturm of $p(x)$, we will denote by $v(\alpha)$ the number of variations of signs to $\alpha$ in the  sequence of real numbers $p_0(\alpha), p_1(\alpha), \ldots, p_n(\alpha)$.
\begin{theorem}[Sturm's Theorem]
\label{Sturms_theorem}
Let $p(x)$ be a  real polynomial of degree $n$ with simple roots. Supposing that neither $a$ nor $b$ is a root of the $p(x)$, then the number of roots of $p(x)$ in the interval $a< x< b$ is equal to the difference 
\begin{align}\label{number_roots}
  v(a)-v(b).
\end{align}
\end{theorem}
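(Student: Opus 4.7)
The plan is to analyze how the function $v(\alpha)$ behaves as $\alpha$ moves from $a$ to $b$, showing it is piecewise constant, decreases by exactly one at each root of $p$, and is unchanged at every other type of transition. The difference $v(a)-v(b)$ will therefore count the roots of $p$ in $(a,b)$.

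First I would establish the structural properties of the Sturm sequence $p_0,p_1,\ldots,p_n$. From the Euclidean-type recursion $p_{i-1}=q_i p_i - p_{i+1}$, the sequence is essentially the Euclidean algorithm applied to $p$ and $p'$; the final polynomial $p_n$ is (a scalar multiple of) $\gcd(p,p')$. Since $p$ has simple roots, this gcd is a nonzero constant, which guarantees that $p_n$ never vanishes. I would then prove two key local properties: (i) no two consecutive polynomials $p_i,p_{i+1}$ can vanish simultaneously, because otherwise the recursion would force $p_n\equiv 0$; and (ii) at any zero of an intermediate $p_i$ ($0<i<n$), the recursion gives $p_{i+1}(c)=-p_{i-1}(c)$, so $p_{i-1}(c)$ and $p_{i+1}(c)$ are nonzero and of opposite sign.

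Next I would show that $v(\alpha)$ can only change at values $\alpha$ where some $p_i(\alpha)=0$, since otherwise all $p_i$ retain their signs on a small interval. At such a transition I would distinguish two cases. Case A: the zero occurs at some intermediate index $0<i<n$. By property (ii) above and continuity, in a small neighborhood of $c$ the triple $(p_{i-1},p_i,p_{i+1})$ has signs of the form $(+,?,-)$ or $(-,?,+)$, where the middle sign is arbitrary (including zero) on either side. Each such triple contributes exactly one sign variation regardless of the middle sign, so $v$ is unchanged across $c$. Case B: the zero occurs at $i=0$, i.e., $c$ is a root of $p$. Since the root is simple, $p_0$ changes sign at $c$ while $p_1=p'$ has a definite nonzero sign near $c$ equal to the sign of $p_0$ just to the right of $c$. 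Hence the pair $(p_0,p_1)$ contributes a sign variation immediately to the left of $c$ but not immediately to the right, so $v$ decreases by exactly one.

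Combining the cases, as $\alpha$ sweeps from $a$ to $b$, $v(\alpha)$ loses exactly one unit at each root of $p$ in $(a,b)$ and is otherwise constant; the hypothesis that $p(a)\ne 0$ and $p(b)\ne 0$ ensures no ambiguity at the endpoints. Therefore $v(a)-v(b)$ equals the number of roots of $p$ in the open interval $(a,b)$, as claimed. The main obstacle is the careful bookkeeping in Case A: one must argue that even though $p_i$ itself changes sign (or may equal zero at isolated points), the contribution of the triple $(p_{i-1},p_i,p_{i+1})$ to the count of variations is invariant on a deleted neighborhood of $c$. All other ingredients are routine consequences of the Euclidean structure together with the simple-roots hypothesis.
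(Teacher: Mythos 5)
Your argument is the classical proof of Sturm's theorem and is correct; the paper gives no proof of its own, simply citing Uspensky, where essentially this same sign-variation analysis (constancy of $v$ away from zeros of the $p_i$, invariance across zeros of intermediate terms via the opposite-sign property $p_{i-1}(c)=-p_{i+1}(c)$, and a drop of one at each simple root of $p$) appears. The only point to watch is the convention that the sequence effectively terminates at the last nonzero remainder, a nonzero constant because the roots are simple, which you use implicitly when asserting that $p_n$ never vanishes.
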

\begin{proof}
See \cite{Uspensky}.
\end{proof}

Sturm's theorem provides a robust and straightforward method for determining the number of roots in a given interval. Although it is a technique with high computational cost compared to other methods, such as Vincent-Collins-Akritas (VCA) \cite{VCA}, it is sufficiently effective for the purpose of this subsection. Sturm's theorem can be applied to polynomials with multiple roots. In this case, it provides the number of distinct roots but does not determine the multiplicity of the roots \cite{JMT}. We are particularly interested in checking if a polynomial in one variable $p(x)$ has any root in a given interval  $a<x <b$, i.e., we want to investigate whether the difference \eqref{number_roots} is null. Therefore, the condition for $p(x)$ to have simple roots is unnecessary for our investigation.

\subsection{Analyzing the blocks for the Tetrahedron}
\label{subsec: Analyzing the blocks for the Tetrahedron}
\subsubsection{The first block $T_1$}
\label{subsec:block1_tetahedron}
The first column of $P$ in \eqref{Matrix_p_tetraedro} only contributes to the masses associated with the outer tetrahedron because its last four entries are zero. Similarly, the second column contributes only to the masses associated with the inner tetrahedron.
As the coordinates of the vectors are equal, this block indicates if equal masses in each tetrahedron generate a central configuration.
We conclude that there is a central configuration with equal masses in each polyhedron if, and only if, the system $T_1\vec{\mu}=\textbf{0}$ has a nontrivial solution $\vec{\mu}=(\mu_1,\mu_2)$. Moreover, the restrictions are $\mu_1>0$ and $\mu_2>0$, and $\frac{\mu_1}{\mu_2}$ corresponds to the ratio between the outer and inner mass. 

The block $T_1$ is given by
{\small
\begin{equation}
   \label{block1_tetahedron} T_1=\left(\begin{array}{lr}
12 \, c - \frac{3}{8} \, \sqrt{2} & 12 \, c - \frac{3 \, {\left(t + 3\right)}}{{\left(3 \, t^{2} + 2 \, t + 3\right)}^{\frac{3}{2}}} - \frac{\sqrt{3}}{3 \, {\left(t - 1\right)}^{2}}\\
12 \, c t - \frac{3 \, {\left(3 \, t + 1\right)}}{{\left(3 \, t^{2} + 2 \, t + 3\right)}^{\frac{3}{2}}} + \frac{\sqrt{3} t - \sqrt{3}}{3 \, {\left(t - 1\right)}^{3}}  & 12 \, c t - \frac{\frac{3}{8} \, \sqrt{2}}{t^{2}}
\end{array}\right).
\end{equation}
}
It is easy to verify that the determinant of $T_1$ has the form
\begin{equation}
    \det(T_1)=\alpha_1(t)c+\alpha_0(t). \label{determinant_A_tetrahedron}
\end{equation}
Therefore there exists a nontrivial solution for the kernel of $T_1$ if, and only if
$c=c(t):=-\frac{\alpha_0(t)}{\alpha_1(t)}$ or both $\alpha_1(t)$ and $\alpha_0(t)$ are zero.
The next lemma will be helpful in the discussion that follows. To fix the notation, let $(T_1)_{ij}$ be the entries of $T_1$.

\begin{lemma}
\label{lema_sinais_tetrahedro}
\begin{itemize}
    \item[i)] $\alpha_1(t)$ is negative for $t$ in $(0,1)$.
    \item[ii)] $\alpha_0(t)$ is positive for $t$ in $(0,1)$.
    \item[iii)] $c(t):=-\frac{\alpha_0(t)}{\alpha_1(t)}$ is positive for $t$ in $(0,1)$. 
     \item[iv)] $(T_1)_{11}(c(t),t)$ is positive for  $t$ in $(0,1)$.
      \item[v)]There exits a $\delta<1$ such $(T_1)_{12}(c(t),t)$ is negative for $t$ in $(0,\delta)$ and is positive for $t$ in $(\delta,1)$ .
     \end{itemize}
\end{lemma}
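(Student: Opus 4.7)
The plan is to reduce every claim to a statement about one-variable polynomials, by applying Lemma \ref{parametrizando_uma_expressao} to eliminate the square roots arising from $(3t^{2}+2t+3)^{3/2}$, and then invoking Sturm's theorem to count roots on the relevant interval. First I would expand $\det(T_1)$ directly from the explicit form of $T_1$ given in \eqref{block1_tetahedron} and collect powers of $c$; the $c^{2}$-terms cancel because $(T_1)_{11}(T_1)_{22}$ and $(T_1)_{12}(T_1)_{21}$ both contribute $12c\cdot 12ct$, confirming the linearity in $c$ asserted in \eqref{determinant_A_tetrahedron}, and giving explicit formulas for $\alpha_1(t)$ and $\alpha_0(t)$.

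Next, since the quadratic $3t^{2}+2t+3$ has discriminant $\Delta=-32<0$, Lemma \ref{parametrizando_uma_expressao} applies with $a=3$, $b=2$, $c=3$, producing the substitution $\kappa(u)=\tfrac{2\sqrt{2}}{3}s(u)-\tfrac{1}{3}$ that bijects an explicit open interval $(u_{0},u_{1})$ onto $(0,1)$. After substitution, both $\alpha_{0}(\kappa(u))$ and $\alpha_{1}(\kappa(u))$ become rational functions whose denominators are products of powers of $u$, of $c(u)=\tfrac{1}{2}(u+1/u)$, and of $(\kappa(u)-1)$; each of these factors has a constant (easily determined) sign on $(u_{0},u_{1})$, so multiplying through by their absolute values reduces claims (i) and (ii) to the assertion that certain explicit polynomials $\tilde{\alpha}_{0}(u),\tilde{\alpha}_{1}(u)\in\mathbb{Q}[\sqrt{2},\sqrt{3}][u]$ have no zero on $(u_{0},u_{1})$ and a specific sign at one chosen test point. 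Theorem \ref{Sturms_theorem} applied to $\tilde{\alpha}_{0}$ and $\tilde{\alpha}_{1}$ furnishes the root counts, and a single numerical evaluation at, say, $u=u_{0}$ fixes the sign. Part (iii) is then an immediate consequence of (i) and (ii).

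For (iv), I would substitute $c=c(t)=-\alpha_{0}(t)/\alpha_{1}(t)$ into $(T_1)_{11}=12c-\tfrac{3\sqrt{2}}{8}$ to obtain $(T_1)_{11}(c(t),t)=-\tfrac{1}{\alpha_{1}(t)}\bigl(12\alpha_{0}(t)+\tfrac{3\sqrt{2}}{8}\alpha_{1}(t)\bigr)$; since $\alpha_{1}(t)<0$ by (i), positivity of this quantity on $(0,1)$ is equivalent to positivity of the numerator, which after reparameterization and clearing positive denominators becomes another polynomial statement amenable to Sturm. Part (v) is the main obstacle: the analogous substitution in $(T_1)_{12}$ leads, after reparameterization and clearing positive factors, to a polynomial $p(u)$ on $(u_{0},u_{1})$ which must have \emph{exactly one} root, with a sign change. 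I would first run Sturm's theorem on $p$ on the full interval to confirm the root count is one, and then evaluate $p$ at the two endpoints to verify the sign reversal; setting $\delta=\kappa(u_{*})$ with $u_{*}$ the unique root then yields the stated dichotomy.

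The anticipated difficulty is computational rather than conceptual: after reparameterization the polynomials have fairly high degree and carry coefficients in $\mathbb{Q}[\sqrt{2},\sqrt{3}]$, so the Sturm sequences must be computed exactly in this extension. This is precisely the setting in which the symbolic methods advertised in the introduction (and implemented in the companion Sagemath worksheet referenced at the end of the paper) are unavoidable, but no new theoretical ingredient beyond Lemma \ref{parametrizando_uma_expressao} and Theorem \ref{Sturms_theorem} is required.
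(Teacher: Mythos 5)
Your proposal follows essentially the same route as the paper: reparametrize via Lemma \ref{parametrizando_uma_expressao} to turn $\alpha_1$, $\alpha_0$ (and the entries of $T_1$ after substituting $c(t)$) into rational functions of $u$, reduce each sign claim to the absence (or, for item v), the uniqueness) of roots of an explicit numerator polynomial on $(\sqrt{2},\sqrt{2}+\sqrt{3})$ via Sturm's theorem, and fix the overall sign by a pointwise evaluation — the paper does the same, merely using the limits of $\alpha_i(t)$ as $t\to 0^+$ and $t\to 1^-$ in place of your test-point evaluation. The only cosmetic difference is that the paper explicitly records that the numerator of $\alpha_0$ has its unique root in $(\sqrt{2},4)$ exactly at the endpoint $\sqrt{2}+\sqrt{3}$, and it omits the details of items iv) and v) as "similar"; your sketch of those items is consistent with what the authors intend.
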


\begin{proof}
Equation \eqref{determinant_A_tetrahedron} is given by the expressions
\begin{align}
&\resizebox{.6\hsize}{!}{$
\alpha_1(t)
=-\frac{1}{2} \, {\left(9 \sqrt{2} t - \frac{72 \, {\left(t^{2} + 6 \, t + 1\right)}}{{\left(3 \, t^{2} + 2 \, t + 3\right)}^{\frac{3}{2}}} - \frac{8 \, \sqrt{3}}{t - 1} + \frac{9 \, \sqrt{2}}{t^{2}}\right),} $}\label{alphaumtetraedro} \\
 &\resizebox{.8\hsize}{!}{$\alpha_0(t)=- \frac{9 \, {\left(3 \, t + 1\right)} {\left(t + 3\right)}}{{\left(3 \, t^{2} + 2 \, t + 3\right)}^{3}} - \frac{2 \, \sqrt{3}}{{\left(3 \, t^{2} + 2 \, t + 3\right)}^{\frac{3}{2}} {\left(t - 1\right)}} + \frac{9}{32 \, t^{2}} + \frac{1}{3 \, {\left(t - 1\right)}^{4}}$}.\label{alphazerotetraedro}
\end{align}
Lemma \ref{parametrizando_uma_expressao} applied to denominators $(3t^2+2t+3)^{\frac{3}{2}}$ will transform these expressions into a quotient between two polynomials on $u$. The interval for $u$ in $(\sqrt{2},\sqrt{2}+\sqrt{3})$ corresponds to $t$ in $(0,1)$.
\begin{itemize}
\item[i)] After the change of variables, the polynomial on the numerator of $\alpha_1(t)$ is given by
{\small \begin{align}\label{polinomio_numerador_alpha1_tetraedro} 
    \begin{split}
      &-6\sqrt{3}\sqrt{2} u^{14} + 42  \sqrt{3} u^{13} - 3  {\left(13  \sqrt{3} \sqrt{2} - 48\right)} u^{12} - 6 
         {\left(7  \sqrt{3} - 48  \sqrt{2}\right)}u^{11} \\
        &+ 3  {\left(43  \sqrt{3} \sqrt{2} - 1224\right)} u^{10}- 30  u^{9} {\left(7  \sqrt{3} - 60  \sqrt{2}\right)} + 36  {\left(15  \sqrt{3} \sqrt{2} + 236\right)} u^{8} \\
         &- 4320  \sqrt{2} u^{7} + 36  {\left(15  \sqrt{3} \sqrt{2} - 238\right)} u^{6} + 6  u^{5} {\left(35  \sqrt{3} + 348  \sqrt{2}\right)} \\
         &+3  {\left(43  \sqrt{3} \sqrt{2} + 1296\right)} u^{4} + 6  u^{3} {\left(7  \sqrt{3} + 72  \sqrt{2}\right)} - 39  \sqrt{3} \sqrt{2} u^{2} - 42  \sqrt{3} u - 6  \sqrt{3} \sqrt{2}
        \end{split}
   \end{align}
  }
We apply Sturm's theorem to guarantee that the polynomial \eqref{polinomio_numerador_alpha1_tetraedro} has no root on the interval $(\sqrt{2},\sqrt{2}+\sqrt{3})$. Consequently, $\alpha_1(t)$ has no zeros for $t$ in $(0,1)$.
    Moreover $\displaystyle\lim_{t \to 0^+}\alpha_1(t)=\displaystyle\lim_{t \to 1^-}\alpha_1(t)=-\infty$. We conclude that $\alpha_1(t)$ is negative for $t$ in $(0,1)$.
  \item[ii)] The change of variables transforms $\alpha_0(t)$ into a quotient where the polynomial on the numerator is
 {\small   
    \begin{align}
    \label{polinomio_numerador_alpha0_tetraedro} 
    \begin{split}
        &-81u^{24} +810\sqrt{2} u^{23} -4833u^{22} -972 \sqrt{2}u^{21} +9477  u^{20} + 256770\sqrt{2} u^{19} -1733643u^{18}\\ &+ 1413936  \sqrt{2} u^{17} + 3448278 u^{16} - 5534892\sqrt{2} u^{15} - 3077514 u^{14} + 8276472\sqrt{2}u^{13}\\ & + 2820906 u^{12} - 5711148  \sqrt{2} u^{11} - 3340278 u^{10} + 1548720\sqrt{2} u^{9} + 1995435u^{8}\\& + 426114\sqrt{2} u^{7} + 96795  u^{6} + 14580\sqrt{2} u^{5} + 6561u^{4} + 810  \sqrt{2} u^{3} + 81  u^{2}.
        \end{split}
    \end{align}
    }
 By Sturm's theorem, this polynomial has only one root in the interval $(\sqrt{2},4)$. This root is $\sqrt{2}+\sqrt{3}$. Thus there is no root between $\sqrt{2}$ and $\sqrt{2}+\sqrt{3}$. Hence $\alpha_0(t)$ is different from zero for $t$ in $(0,1)$. Furthermore, $\displaystyle\lim_{t \to 0^+}\alpha_0(t)=\displaystyle\lim_{t \to 1^-}\alpha_0(t)=+\infty$. This proves item ii).
    \item[iii)] Follows from i) and ii).
    \end{itemize}
Items iv) and v) are proved in a similar way. We will omit the proofs.
\end{proof}
By Lemma \ref{lema_sinais_tetrahedro} there exists $\vec{\mu}=(\mu_1,\mu_2)$ a nontrivial solution of $T_1\vec{\mu}=\textbf{0}$ for each $t \in (0,1)$. 
 The equation of the first row is
\begin{equation}
\mu_1(T_1)_{11}+\mu_2(T_1)_{12}=0. \label{relacao_massas_tetraedro}
\end{equation}
 By Lemma \ref{lema_sinais_tetrahedro},  item iv),  $(T_1)_{11}\neq 0$
 ,
then $\mu_2\neq 0$.
So the equation 
\eqref{relacao_massas_tetraedro} is equivalent to
$
\frac{\mu_1}{\mu_2}=-\frac{(T_1)_{12}}{(T_1)_{11}}. $
Substituting the expression for the entries of $T_1$ given in \eqref{block1_tetahedron}, we obtain formula \eqref{formula_razao_massas_tetraedro}, with  $c(t)=-\frac{\alpha_0(t)}{\alpha_1(t)}$. 

Conversely suppose the equation  \eqref{formula_razao_massas_tetraedro} is true and  $c=c(t)=-\frac{\alpha_0(t)}{\alpha_1(t)}$. 
Then equation \eqref{relacao_massas_tetraedro} is also true.
As the determinant of $T_1$ is zero, the second equation of $T_1\vec{\mu}$ is a multiple of the first equation and, therefore, is also zero.   
We have proved the next theorem.
\begin{theorem}
\label{Teorema_1_Tetraedro}
Consider a configuration where positions are the vertices of two nested tetrahedra. Moreover, suppose the masses in each tetrahedron are equal. 
Let $t$ be the ratio between the edges of the two tetrahedra. 
The configuration is a central configuration if and only if
the ratio between the outer mass $\mu_1$ and inner mass $\mu_2$ is given by   
\begin{align}
\frac{\mu_1}{\mu_2}=-\frac{36 \, c(t) - \frac{9 \, {\left(t + 3\right)}}{{\left(3 \, t^{2} + 2 \, t + 3\right)}^{\frac{3}{2}}} - \frac{\sqrt{3}}{{\left(t - 1\right)}^{2}}}{3 \, {\left(12 \, c(t) - \frac{3}{8} \, \sqrt{2}\right)}},\label{formula_razao_massas_tetraedro}\end{align}
where
\begin{align}
\label{c_de_t_tetrahedron}
c(t)=\frac{- \frac{9 \, {\left(3 \, t + 1\right)} {\left(t + 3\right)}}{{\left(3 \, t^{2} + 2 \, t + 3\right)}^{3}} - \frac{2 \, \sqrt{3}}{{\left(3 \, t^{2} + 2 \, t + 3\right)}^{\frac{3}{2}} {\left(t - 1\right)}} + \frac{9}{32 \, t^{2}} + \frac{1}{3 \, {\left(t - 1\right)}^{4}}}{-\frac{1}{2} \, {\left(9\sqrt{2} t - \frac{72 \, {\left(t^{2} + 6 \, t + 1\right)}}{{\left(3 \, t^{2} + 2 \, t + 3\right)}^{\frac{3}{2}}} - \frac{8 \, \sqrt{3}}{t - 1} + \frac{9 \, \sqrt{2}}{t^{2}}\right)}}.
\end{align}
 \end{theorem}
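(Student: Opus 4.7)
The plan is to reduce the full central-configuration system $S\vec{m}=\mathbf{0}$ to the two-by-two block $T_1$ and then solve that smaller linear system. First I would observe that the hypothesis of equal masses on each tetrahedron forces the mass vector to lie in the two-dimensional subspace spanned by the first two columns of the matrix $P$ in \eqref{Matrix_p_tetraedro}, since these are precisely the columns whose entries are constant on each orbit. Writing $\vec{m}=\mu_1 P_{\cdot,1}+\mu_2 P_{\cdot,2}$, the coordinate vector in the new basis has the form $(\mu_1,\mu_2,0,\dots,0)$. By the block diagonal structure of $Q^{-1}SP$ established via Lemmas \ref{lemma_block_zero} and \ref{lema_decompoe_mais}, the central-configuration equation is then equivalent to the single system $T_1(\mu_1,\mu_2)^T=\mathbf{0}$.

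Next I would handle existence of a nontrivial kernel. A nontrivial $(\mu_1,\mu_2)$ exists iff $\det(T_1)=0$, and by \eqref{determinant_A_tetrahedron} we have $\det(T_1)=\alpha_1(t)c+\alpha_0(t)$. Lemma \ref{lema_sinais_tetrahedro}(i) guarantees $\alpha_1(t)\neq 0$ on $(0,1)$, so $\det(T_1)=0$ is equivalent to $c=c(t):=-\alpha_0(t)/\alpha_1(t)$, which is exactly \eqref{c_de_t_tetrahedron}. Under this value of $c$, the two rows of $T_1$ are linearly dependent, so the whole system reduces to the first row $\mu_1(T_1)_{11}+\mu_2(T_1)_{12}=0$. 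Lemma \ref{lema_sinais_tetrahedro}(iv) gives $(T_1)_{11}(c(t),t)\neq 0$, hence $\mu_2\neq 0$ and we may divide to obtain $\mu_1/\mu_2=-(T_1)_{12}/(T_1)_{11}$; substituting the explicit entries from \eqref{block1_tetahedron} yields \eqref{formula_razao_massas_tetraedro}.

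For the converse, I would simply note that if the ratio satisfies \eqref{formula_razao_massas_tetraedro} with $c=c(t)$, then the first-row equation of $T_1(\mu_1,\mu_2)^T=\mathbf{0}$ holds by construction. Since $c=c(t)$ enforces $\det(T_1)=0$, the second row is a scalar multiple of the first and is automatically satisfied, so $(\mu_1,\mu_2)$ lies in the kernel of $T_1$ and, pulling back through $P$, the original configuration is central. The main obstacle of the argument does not lie in this theorem itself but in Lemma \ref{lema_sinais_tetrahedro}, where one must verify that $\alpha_1$ and $(T_1)_{11}(c(t),t)$ do not vanish on $(0,1)$; this analytic work has already been packaged using the rational reparametrization of Lemma \ref{parametrizando_uma_expressao} together with Sturm's theorem, so at the level of the present statement all the heavy lifting is contained in the symmetry-adapted reduction to the $2\times 2$ block.
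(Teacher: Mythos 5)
Your proposal is correct and follows essentially the same route as the paper: reduction to the block $T_1$ via the symmetry-adapted basis (equal masses on each orbit lie in the span of the first two columns of $P$), the determinant identity $\det(T_1)=\alpha_1(t)c+\alpha_0(t)$ together with Lemma \ref{lema_sinais_tetrahedro}(i) to pin down $c=c(t)$, item (iv) to guarantee $\mu_2\neq 0$ and solve the first row for $\mu_1/\mu_2$, and the rank-one argument for the converse. No gaps; the analytic content is, as you note, carried by Lemma \ref{lema_sinais_tetrahedro}.
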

Note that the next theorem also follows from Lemma \ref{lema_sinais_tetrahedro}, items iv) and v)  and formula \eqref{formula_razao_massas_tetraedro}.
\begin{theorem}
\label{Teorema_2_Tetraedro}
Let $\mu_1$ be the mass in the outer tetrahedron, and $\mu_2$ be the mass in the inner tetrahedron in a central configuration of two nested tetrahedra. Let $t$ be the ratio between the edges of the tetrahedra. There is an open interval $(0,\delta)$ such that if $t \in (0,\delta)$, the masses $\mu_1$ and $\mu_2$ must have the same sign. The masses have opposite signs if $t\in (\delta, 1)$. That is, the central configuration is possible if and only if the two tetrahedra are not so close.
\end{theorem}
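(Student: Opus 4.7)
The plan is to deduce Theorem~\ref{Teorema_2_Tetraedro} directly from the sign information already packaged in Lemma~\ref{lema_sinais_tetrahedro} (items iv and v) together with the kernel relation for the block $T_1$ that produced Theorem~\ref{Teorema_1_Tetraedro}. The key observation is that for each $t \in (0,1)$ there is a one-parameter family of solutions $\vec{\mu}=(\mu_1,\mu_2)$ of $T_1\vec{\mu}=\mathbf{0}$ (with $c=c(t)$), and that the ratio $\mu_1/\mu_2$ controls the physical masses up to a global rescaling: $\mu_1,\mu_2$ can be chosen both positive exactly when this ratio is positive.

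First I would recall from Subsection~\ref{subsec:block1_tetahedron} that the first row of $T_1\vec{\mu}=\mathbf{0}$ is
\begin{equation*}
\mu_1 (T_1)_{11} + \mu_2 (T_1)_{12} = 0.
\end{equation*}
Since item iv) of Lemma~\ref{lema_sinais_tetrahedro} guarantees $(T_1)_{11}(c(t),t)>0$ on $(0,1)$, this row forces $\mu_2 \neq 0$ whenever $\vec{\mu}$ is nontrivial, and yields
\begin{equation*}
\frac{\mu_1}{\mu_2} = -\frac{(T_1)_{12}(c(t),t)}{(T_1)_{11}(c(t),t)}.
\end{equation*}
Hence $\operatorname{sgn}(\mu_1/\mu_2) = -\operatorname{sgn}\bigl((T_1)_{12}(c(t),t)\bigr)$ for every $t\in(0,1)$.

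Next I would apply item v) of Lemma~\ref{lema_sinais_tetrahedro}, which provides the threshold $\delta<1$ such that $(T_1)_{12}(c(t),t)<0$ for $t\in(0,\delta)$ and $(T_1)_{12}(c(t),t)>0$ for $t\in(\delta,1)$. Combining this with the ratio formula above gives $\mu_1/\mu_2>0$ on $(0,\delta)$ (same signs, so the masses can be chosen both positive) and $\mu_1/\mu_2<0$ on $(\delta,1)$ (opposite signs, incompatible with positive masses). Finally I would translate the dichotomy back to the geometric picture: small $t$ means the inner tetrahedron is much smaller than the outer one, while $t$ near $1$ means the two tetrahedra nearly coincide in size, giving the informal statement that the nested central configuration with positive masses exists precisely when the two tetrahedra are not too close in size.

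The only subtle point is consistency with the second row of $T_1\vec{\mu}=\mathbf{0}$: I would note that by construction $c=c(t)$ makes $\det(T_1)=0$ (via \eqref{determinant_A_tetrahedron}), so the second equation is automatically a scalar multiple of the first, and no extra sign condition appears. Since the real work (the sign analysis encoded in Lemma~\ref{lema_sinais_tetrahedro}) has already been carried out via the Sturm--sequence argument on the rational reparameterizations of $(T_1)_{11}$ and $(T_1)_{12}$, the proof of Theorem~\ref{Teorema_2_Tetraedro} itself is just this short sign-tracking argument; no further obstacle is expected.
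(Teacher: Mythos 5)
Your proposal is correct and follows essentially the same route as the paper: the paper derives Theorem~\ref{Teorema_2_Tetraedro} directly from items iv) and v) of Lemma~\ref{lema_sinais_tetrahedro} together with the ratio formula \eqref{formula_razao_massas_tetraedro}, which is exactly the sign-tracking argument you describe. Your added remark that the second row of $T_1\vec{\mu}=\mathbf{0}$ imposes no further condition because $\det(T_1)=0$ is also present in the paper's discussion preceding Theorem~\ref{Teorema_1_Tetraedro}.
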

\subsubsection{Block $\mathfrak{t}_4$}
\label{subsec:block4_tetahedron}
Remember, there are three equal blocks to this one.
So, the same coefficient will appear in three columns. The first coordinate of a solution for the kernel of $\mathfrak{t}_4$ is a coefficient for columns $3$, $5$, and $7$; the second coordinate is a coefficient for columns $4$, $6$,  and $8$ of the matrix $P$ in \eqref{Matrix_p_tetraedro}. 

The block $\mathfrak{t}_4$ is given by 
{\small
\begin{align}
\label{block2_tetahedron} 
\mathfrak{t}_4 = \left(\begin{array}{cc}
0 & -\frac{2{\left(t - 1\right)}}{{\left(3 t^{2} + 2  t + 3\right)}^{\frac{3}{2}}} - \frac{2  {\left(\sqrt{3} t - \sqrt{3}\right)}}{9 {\left(t - 1\right)}^{3}} \\
-4  c + \frac{1}{8}  \sqrt{2} & -4  c t + \frac{3  t + 1}{{\left(3  t^{2} + 2  t + 3\right)}^{\frac{3}{2}}} - \frac{\sqrt{3} t - \sqrt{3}}{9  {\left(t - 1\right)}^{3}} \\
\frac{2  {\left(t - 1\right)}}{{\left(3  t^{2} + 2  t + 3\right)}^{\frac{3}{2}}} + \frac{2  {\left(\sqrt{3} t - \sqrt{3}\right)}}{9  {\left(t - 1\right)}^{3}} & 0 \\
-4  c + \frac{t + 3}{{\left(3  t^{2} + 2  t + 3\right)}^{\frac{3}{2}}} + \frac{\sqrt{3} t - \sqrt{3}}{9  {\left(t - 1\right)}^{3}} & -4  c t + \frac{\sqrt{2}}{8  t^{2}}
\end{array}\right).
\end{align}
}
One of the minors $2\times2$ of $\mathfrak{t}_4$ is independent of $c$, given by
\begin{align}\label{p2}
\resizebox{.7\hsize}{!}{$  \frac{4}{81} \, {\left(\frac{9 \, t}{{\left(3 \, t^{2} + 2 \, t + 3\right)}^{\frac{3}{2}}} - \frac{9}{{\left(3 \, t^{2} + 2 \, t + 3\right)}^{\frac{3}{2}}} + \frac{\sqrt{3} t}{{\left(t - 1\right)}^{3}} - \frac{\sqrt{3}}{{\left(t - 1\right)}^{3}}\right)}^{2}.$}
\end{align}
Using Lemma \ref{parametrizando_uma_expressao} will produce an expression with a numerator
\begin{equation}
\begin{aligned}-4 \, u^{8} + 12 \, \sqrt{2} u^{7} - 48 \, u^{6} + 8 \, \sqrt{2} u^{5} + 36 \, u^{4} + 12 \, \sqrt{2} u^{3}. \label{block_2_tetrahedron_polynomial_of_u}
\end{aligned}
\end{equation}
 By Sturm's theorem, the polynomial \eqref{block_2_tetrahedron_polynomial_of_u} has not a root in the interval in $(\sqrt{2},\sqrt{2}+\sqrt{3})$. Hence, the expression \eqref{p2}  has no zeros in the interval $(0,1)$; this proves the next theorem.

\begin{theorem}
Consider a central configuration where positions are the vertices of two nested tetrahedra. The masses in each tetrahedron are equal. 
\end{theorem}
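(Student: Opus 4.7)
The goal is to conclude that in any central configuration of two nested regular tetrahedra, the masses at the four vertices of each tetrahedron must coincide. My plan is to translate this statement into a statement about which blocks of the decomposed matrix $Q^{-1}SP$ the mass vector can have nontrivial components in, and then show that only the first block $T_1$ allows a nontrivial kernel.

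First, I would recall that after the change of basis, a mass vector solves $S\vec{m}=\mathbf{0}$ if and only if its coordinate vector with respect to the columns of $P$ lies in the kernel of $Q^{-1}SP$. By the block structure derived above, this kernel splits as the kernel of $T_1$ (contributing to the first two columns of $P$) plus three copies of the kernel of $\mathfrak{t}_4$ (contributing to columns $3$ through $8$). Inspecting the matrix $P$ in \eqref{Matrix_p_tetraedro}, one sees that the first column assigns the same weight to each of the four outer vertices and zero to the inner ones, while the second column does the analogous thing for the inner tetrahedron; all other columns assign non-constant weights within at least one tetrahedron. Hence, equal masses within each tetrahedron correspond exactly to the condition that the coefficients for columns $3$ through $8$ vanish, i.e.\ to $\mathfrak{t}_4\vec{\nu}=\mathbf{0}$ forcing $\vec{\nu}=\mathbf{0}$.

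Next I would verify that $\mathfrak{t}_4$ has trivial kernel for every $t\in(0,1)$, equivalently that $\mathfrak{t}_4$ has full column rank $2$. For this I use the $2\times 2$ minor exhibited in \eqref{p2}, which is a perfect square and, crucially, \emph{independent of the parameter $c$}. Applying the parametrization of Lemma \ref{parametrizando_uma_expressao} to the denominator $(3t^2+2t+3)^{3/2}$ transforms this minor into a rational function whose numerator is the polynomial in \eqref{block_2_tetrahedron_polynomial_of_u}. I would then invoke Sturm's Theorem (Theorem \ref{Sturms_theorem}) on the interval $(\sqrt{2},\sqrt{2}+\sqrt{3})$, which corresponds to $t\in(0,1)$ under $\kappa$, to conclude that the polynomial has no root there. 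Because the minor is a nontrivial square with no zeros in the interval, it is strictly positive on $(0,1)$, so $\mathfrak{t}_4$ has rank $2$.

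Combining these two steps, any mass vector $\vec{m}$ solving the central configuration equations must have zero coefficients on columns $3,\ldots,8$ of $P$. Therefore $\vec{m}$ is a linear combination of the first two columns of $P$, which gives the same mass at all four vertices of the outer tetrahedron and the same mass (possibly different) at all four vertices of the inner tetrahedron. The only routine-but-not-hard step is the Sturm computation (already recorded in the excerpt); the main conceptual point, and the step I would want to state most carefully, is the identification of ``equal masses on each tetrahedron'' with ``support contained in the span of the first two columns of $P$'', since this is what allows the entire proof to be reduced to the nonvanishing of a single minor of $\mathfrak{t}_4$.
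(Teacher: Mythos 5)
Your proposal is correct and follows essentially the same route as the paper: you identify equal masses within each tetrahedron with the span of the first two columns of $P$, and then kill the remaining coefficients by showing the $c$-independent perfect-square minor \eqref{p2} of $\mathfrak{t}_4$ has no zeros on $(0,1)$ via Lemma \ref{parametrizando_uma_expressao} and Sturm's theorem applied to \eqref{block_2_tetrahedron_polynomial_of_u} on $(\sqrt{2},\sqrt{2}+\sqrt{3})$. Your write-up in fact makes explicit the reduction step (equal masses $\Leftrightarrow$ vanishing of the $\mathfrak{t}_4$-coefficients) that the paper leaves implicit.
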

\subsection{ Nested Octahedra}
\label{sec:Octahedron} 
In this subsection, we reobtain the results of \cite{Liu_Tao}. Specifically, subsection \ref{subsec:Block_1_Octahedron} proves Theorem 1.3 of that reference. While subsections \ref{subsec:Block_5_Octahedron} and
\ref{subsec:Block_9_Octahedron} prove Theorem 1.2 of the same reference.

For the outer octahedron, we choose the following positions $q_1=(1,0,0)^T,q_2=(-1,0,0)^T,q_3=(0,1,0)^T,q_4=(0,-1,0)^T,q_5=(0,0,1)^T,q_6=(0,0,-1)^T.$

The group of symmetries of the octahedron is $O_h,$ a group of order $48$. This group has 
ten irreducible subrepresentations, with degrees given explicitly by
\begin{equation}
(\deg(r_1),\ldots,\deg(r_{10}))=(1,1,1,1, 2, 2, 3,3,3,3).
\end{equation}
The representations $\theta:O_h \rightarrow GL(\mathbb{C}^{12})$ and $\theta\otimes\rho:O_h \rightarrow GL(\mathbb{C}^{36})$ decomposes into irreducible
subrepresentations as

\begin{align}
\label{decomposition_of_Octahedron_Representations}
\theta&=2\theta_1\oplus 2\theta_5\oplus 2\theta_9,\\
    \theta\otimes\rho&=2(\theta\otimes\rho)_1\oplus 2(\theta\otimes\rho)_5\oplus2(\theta\otimes\rho)_7\oplus2(\theta\otimes\rho)_8\oplus4(\theta\otimes\rho)_9\oplus2(\theta\otimes\rho)_{10}.\nonumber
\end{align}

Hence, there are three non-null blocks denoted by $O_1, O_5, O_9$. The block $O_1$ from the equivalence between $\theta_1$ and $(\theta\otimes\rho)_1$  is of size $2\times 2$(the multiplicities) e have just one copy(the degree).
Block $O_5$ is of size $4\times4$ divided in two copies $\mathfrakO_5$ of size $2\times 2$.
While block $O_9$ is of size $12\times 6$ divided in three copies $\mathfrakO_9$ of size $4\times 2$. We analyze them below.
\subsubsection{Block $O_1$}
\label{subsec:Block_1_Octahedron}

The block $O_1$ in the octahedron case is given by
{\small
\begin{align}
 \label{block_1_Octahedron_Case}O_1=\left(\begin{array}{cc}
6 \, c - \frac{(4\sqrt{2}+1)}{4} & \,6 \, c - \frac{1}{(t + 1)^2}- \frac{4}{{\left(t^{2} + 1\right)}^{\frac{3}{2}}} - \frac{1}{{\left(t - 1\right)}^{2}} \\
6 \, c t - \frac{1}{(t + 1)^2} - \frac{4 \, t}{{\left(t^{2} + 1\right)}^{\frac{3}{2}}} + \frac{1}{{\left(t - 1\right)}^{2}}& 6 \, c t - \frac{4\sqrt{2}+1}{t^{2}}
\end{array}\right).
\end{align}
}
It is easy to see that 
$\det(O_1)=\beta_1(t)c+\beta_0(t)$. 
We verify a lemma similar to Lemma \ref{lema_sinais_tetrahedro}, and get the same conclusions for this case
as in subsection \ref{subsec:block1_tetahedron}. To be concise,
 we will omit the calculations. Also,
from now on,  we will omit the relationship between the masses and the edges of polyhedra. 
The reader can use the same arguments as before to find.
\subsubsection{Block $\mathfrakO_5$}
\label{subsec:Block_5_Octahedron}
The block $\mathfrakO_5$ is given by
{\small
\begin{align}
  \mathfrakO_5=  \left(\begin{array}{cc}
\frac{\sqrt{2}}{2} - \frac{1}{4} & -\frac{1}{(t + 1)^2}+ \frac{2}{{\left(t^{2} + 1\right)}^{\frac{3}{2}}}  - \frac{1}{{(1 - t) }^{2}} \\
-\frac{1}{(t + 1)^2} + \frac{2 \, t}{{\left(t^{2} + 1\right)}^{\frac{3}{2}}}+ \frac{1}{{( 1 - t )}^{2}} & \frac{1}{ t^{2}}\left(\frac{\sqrt{2}}{2} - \frac{1}{4}\right)
\end{array}\right).
\end{align}}
Using Sturm's Theorem  and Lemma \ref{parametrizando_uma_expressao}, we verify that the determinant is different from zero for $t \in (0,1)$.
\subsubsection{Block $\mathfrakO_9$}
\label{subsec:Block_9_Octahedron}
The block $\mathfrakO_9$ is given by
\begin{equation}
\label{Block_O9_Octahedron_Case}
\mathfrakO_9=\left(\begin{array}{cc}
-2 \, c + \frac{1}{4} & -2 \, c t + \frac{1}{(t + 1)^2}  - \frac{1}{{(1-t)}^{2}} \\
-4 \, c + \sqrt{2} & -4 \, c t + \frac{4 \, t}{{\left(t^{2} + 1\right)}^{\frac{3}{2}}} \\
-2 \, c + \frac{1}{(t + 1)^2} + \frac{1}{{(1- t)}^{2}}  & -2 \, c t + \frac{1}{4t^{2}} \\
-4 \, c + \frac{4}{{\left(t^{2} + 1\right)}^{\frac{3}{2}}} & -4 \, c t + \frac{\sqrt{2}}{t^{2}}
\end{array}\right).
\end{equation}
In this case, we calculate the $2\times2$ minors of $\mathfrakO_9$ and try to show that at least one is differs from zero to $t \in (0,1)$.
We choose the following two minors: 
\begin{align}
\label{p_91_Octahedron}
&\resizebox{.7\hsize}{!}{$p_{91}(c,t)={-\frac{1}{2} \, c {\left(t - \frac{4}{t + 1} - \frac{4}{t - 1} + \frac{1}{t^{2}}\right)} + \frac{1}{16 \, t^{2}} - \frac{1}{{\left(t + 1\right)}^{4}} + \frac{1}{{\left(t - 1\right)}^{4}}}$},\\
&\resizebox{.99\hsize}{!}{$p_{92}(c,t)={-c {\left(t - \frac{8 \, t}{{\left(t^{2} + 1\right)}^{\frac{3}{2}}} + \frac{2 \, \sqrt{2}}{t^{2}} - \frac{4}{{\left(t + 1\right)}^{2}} + \frac{4}{{\left(t - 1\right)}^{2}}\right)}}+ \frac{\sqrt{2}}{4 \, t^{2}} - \frac{4}{{\left(t^{2} + 1\right)}^{\frac{3}{2}} {\left(t + 1\right)}^{2}} + \frac{4}{{\left(t^{2} + 1\right)}^{\frac{3}{2}} {\left(t - 1\right)}^{2}}$}.\nonumber
\end{align}
We solve the first equation in \eqref{p_91_Octahedron} for $c$ substitute in the second. Using Lemma \ref{parametrizando_uma_expressao} and Sturm's
 Theorem, we ensure that if $p_{91}$ has a zero, then 
$p_{92}$ does not have a zero in $(0,1)$;
this ends the discussion of the octahedron case, and we can enunciate the following  theorem.
\begin{theorem}
Consider a central configuration where positions are the vertices of two nested octahedra. The masses in each octahedron are equal. 
\end{theorem}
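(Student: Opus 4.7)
The plan is to assemble the block-wise analysis already carried out in Subsections~\ref{subsec:Block_1_Octahedron}, \ref{subsec:Block_5_Octahedron} and \ref{subsec:Block_9_Octahedron} and observe that, among the three block types produced by the symmetry-adapted decomposition, only $O_1$ admits a nontrivial kernel for $t \in (0,1)$. The columns of the change-of-basis matrix attached to $O_1$ are constant on each octahedron (up to scaling, one is supported on the outer six positions and one on the inner six), while every column attached to $\mathfrak{o}_5$ or $\mathfrak{o}_9$ has entries that are not all equal within a single octahedron. Consequently, ruling out kernels in the latter two blocks forces $\vec{m}$ to be a combination of only the two $O_1$-columns, which gives equal masses on each octahedron.

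First I would recall the decomposition \eqref{decomposition_of_Octahedron_Representations} and invoke Lemmas~\ref{lemma_block_zero} and \ref{lema_decompoe_mais}, which guarantee that in the adapted basis the matrix $Q^{-1}SP$ has exactly the three diagonal-block types $O_1$, $\mathfrak{o}_5$ (with two identical copies) and $\mathfrak{o}_9$ (with three identical copies). The coordinates of $\vec{m}$ in the adapted basis split correspondingly, so the reduction to trivial kernels of $\mathfrak{o}_5$ and $\mathfrak{o}_9$ is the only content to prove.

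For the $2\times 2$ block $\mathfrak{o}_5$, Subsection~\ref{subsec:Block_5_Octahedron} already shows that $\det(\mathfrak{o}_5)$ is nowhere zero on $(0,1)$ via the reparameterization $t=\kappa(u)$ from Lemma~\ref{parametrizando_uma_expressao} and Sturm's Theorem~\ref{Sturms_theorem}. For the rectangular $4\times 2$ block $\mathfrak{o}_9$, the strategy is to show that the two $2\times 2$ minors $p_{91}$ and $p_{92}$ in \eqref{p_91_Octahedron} never vanish simultaneously for $t\in(0,1)$: solve $p_{91}(c,t)=0$ for $c$ as a rational $c=c_{91}(t)$, substitute into $p_{92}$ to obtain a function of $t$ alone, clear the remaining radicals by $t=\kappa(u)$, and apply Sturm's theorem to the resulting univariate polynomial on the image interval $(\sqrt{2},\sqrt{2}+\sqrt{3})$ to rule out real roots. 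Both minors then cannot vanish at once, so $\mathfrak{o}_9$ has full column rank on the region of interest.

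The principal obstacle is purely computational: the univariate polynomial arising after substituting $c_{91}(t)$ and applying $\kappa$ has noticeably higher degree than the tetrahedral analogue treated in \eqref{block_2_tetrahedron_polynomial_of_u}, so the Sturm sequence is impractical to compute by hand. The symbolic computation is nevertheless routine on a computer algebra system such as Sagemath, as referenced in the Data Availability section, and this is the only step where tedious algebra is unavoidable.
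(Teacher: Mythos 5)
Your proposal follows essentially the same route as the paper: rule out nontrivial kernels for the blocks $\mathfrak{O}_5$ (via the determinant, Lemma~\ref{parametrizando_uma_expressao} and Sturm) and $\mathfrak{O}_9$ (via the two minors $p_{91},p_{92}$, eliminating $c$ and applying Sturm), leaving only the $O_1$ block, whose basis vectors are constant on each octahedron. The only slip is the reparameterization interval: for the octahedron the radical is $(t^2+1)^{3/2}$, so Lemma~\ref{parametrizando_uma_expressao} sends $t\in(0,1)$ to $u\in(1,1+\sqrt{2})$, not to $(\sqrt{2},\sqrt{2}+\sqrt{3})$, which is the tetrahedral interval.
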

\section{Applications: The inverse and direct problem for central configurations of two nested cube}
\label{Applications: The inverse and direct problem for central configurations of two nested cubes}

\subsection{Prerequisites for block analysis - Lift} 

Before analyzing the blocks of the cube case, we needed more algebra tools, as it was not possible to use reparameterization to obtain rational expressions. In this
case, we have introduced new variables to lift the curve to a  higher
dimension space. Consequently, Sturm's theorem could not be applied.

\begin{lemma}
\label{lemma_of_lift}
Let $f:(0,1)\to \mathbb{R}$ be given by
$\displaystyle
f(t)=q(t)+\sum_{p=1}^{L}\dfrac{d_p(t)}{(a_pt^2+b_pt+c_p)^{\frac{3}{2}}}
$
where $q$ and $d_p$ are rational functions of $t$. 
Consider $g:(0,1)\to \mathbb{R}^{L}$ be the curve defined by
\begin{align}
      g(t)=(t, u_1(t),u_2(t),\ldots,u_L(t)), \text{ where }
    u_p(t)=\sqrt{a_pt^2+b_pt+c_p}.   \label{curve_of_lift}
\end{align}
 
 Let $I_p$ be the image of the interval $(0,1)$ by $u_p$.
There exists a rational function $F$ defined on the $L+1$-dimensional block 
$\mathcal{B}=\displaystyle(0,1)\times\prod_{p=1}^{L} I_p$
such that $g$ is a lift of $f$ to $\mathcal{B}$.
\end{lemma}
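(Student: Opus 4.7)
The plan is to construct $F$ by formally replacing each irrational factor $\sqrt{a_p t^2 + b_p t + c_p}$ in the expression for $f$ by a fresh independent variable $y_p$. More precisely, I would set
\begin{equation*}
F(t, y_1, \ldots, y_L) \;=\; q(t) \;+\; \sum_{p=1}^{L} \frac{d_p(t)}{y_p^{\,3}},
\end{equation*}
defined on the block $\mathcal{B} = (0,1)\times\prod_{p=1}^{L} I_p$. Since $q$ and $d_p$ are rational in $t$ by hypothesis, and $y_p^{-3}$ is rational in $y_p$, the function $F$ is rational in $(t, y_1, \ldots, y_L)$ as a whole.

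Next, I would verify that $F$ is well-defined on $\mathcal{B}$. The denominators of $q$ and $d_p$ (as rational functions of $t$) are inherited from the hypothesis that $f$ is defined on $(0,1)$, so they pose no obstruction on the factor $(0,1)$. The denominator $y_p^3$ requires $y_p \neq 0$; but $y_p \in I_p$ is the image of $(0,1)$ under $u_p(t) = \sqrt{a_p t^2 + b_p t + c_p}$, and the paper applies this lemma in the setting where these quadratics have negative discriminant and hence are strictly positive on $(0,1)$. Therefore $I_p \subset (0,+\infty)$, and $y_p^{-3}$ is well-defined and rational throughout $\mathcal{B}$.

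It then remains to check that $g$ is a lift of $f$, that is, $F \circ g = f$. This is a direct substitution: by the definition of $u_p$ we have $u_p(t)^3 = (a_p t^2 + b_p t + c_p)^{3/2}$, so
\begin{equation*}
F(g(t)) \;=\; q(t) + \sum_{p=1}^{L} \frac{d_p(t)}{u_p(t)^{3}} \;=\; q(t) + \sum_{p=1}^{L} \frac{d_p(t)}{(a_p t^2 + b_p t + c_p)^{3/2}} \;=\; f(t),
\end{equation*}
as required.

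There is no serious obstacle here; the statement is essentially a formal observation that an algebraic expression involving finitely many square roots becomes rational once each square root is replaced by an auxiliary variable. The only subtlety worth flagging is the domain issue, namely ensuring that the coordinates $y_p$ stay away from zero on $\mathcal{B}$ so that $F$ remains rational (and not merely rational on a Zariski-open subset). This is precisely what the choice $y_p \in I_p$ guarantees under the positivity of the quadratics $a_p t^2 + b_p t + c_p$ on $(0,1)$ that is implicit from the earlier discussion of parameterization.
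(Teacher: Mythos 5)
Your construction of $F(t,y_1,\ldots,y_L)=q(t)+\sum_{p=1}^{L} d_p(t)/y_p^{3}$ is exactly the function used in the paper's proof, and your verification that $F$ is rational and that $F\circ g=f$ matches the paper's one-line argument. The proposal is correct and takes essentially the same approach, with the added (harmless) remark about positivity of the quadratics keeping $y_p$ away from zero.
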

\begin{proof}
Let
  \begin{equation}
 \label{Function_of_lift}
F(t,u_1,\ldots,u_L)=\sum_{p=1}^{L} \frac{d_p(t)}{u_p^3}+q(t).
 \end{equation}

Then function $F$  is rational and $F\circ g(t) = f(t)$.
\end{proof}

Using the notation of previous lemma.
It is easy to see that a zero of $f$ in $(0,1)$ will produce a zero of $F$ in $\mathcal{B}$. 
Our goal is to show that this does not occur. To determine when a multivariate polynomial does not have zeros on a box, we refer the reader to the work of Barros and Leandro  \cite{Barros_Leandro}, which discusses extensions of Vincent's Theorem (also see \cite{Alesina_Galuzzi1998,Uspensky} and references therein).

Let $p(x)$ be a real polynomial of one variable, and $I=(a,b]$ an interval, define
\begin{align}
\label{Polinomio_Uma_Variavel_Restrito}
    p|_{I}(u)=(u+1)^{\text{degree $p$}}\cdot p\left(\frac{au+b}{u+1}\right).
\end{align}
Then, $p|_{I}$ is the numerator of the composition $p\left(\frac{au+b}{u+1}\right)$.
Given the multivariate real polynomial $p(u_1,u_2,\ldots,u_n)$, we can pick a variable $u_j$ and consider it a polynomial  whose coefficients depend on the other variables. Let us denote this  choice by $p_{\hat{u}_j}(u_1,u_2,\ldots,u_n)$.
Given  a box $\mathcal{B}=\prod_{k=1}^{n} I_k$, we can iterate in a natural way the application given in \eqref{Polinomio_Uma_Variavel_Restrito} and define
\begin{align}
\label{Polinomio_n_Variaveis_Restrito}    p|_{\mathcal{B}}=(\ldots (p_{\hat{u}_n}|_{I_n})\ldots)|_{I_1}.
\end{align}
\begin{theorem}(Barros-Leandro)
\label{Theorem_Barros_Leandro}
The real polynomial $p(u_1,u_2,\ldots,u_n)$ does not have zeros on the box $\mathcal{B}=\prod_{k=1}^{n}[a_k,b_k]$ if and only if there exists a finite covering of $\mathcal{B}\subset \cup \mathcal{B}_{\alpha}$ such that $p|_{\mathcal{B}_{\alpha}}$ is a polynomial whose all coefficients have the same sign.
\end{theorem}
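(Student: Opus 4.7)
The plan is to prove both directions of the equivalence.

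\textbf{Sufficiency (easy direction).} The substitution $u \mapsto t(u) = (au+b)/(u+1)$ is a monotonically decreasing bijection $[0,+\infty) \to (a,b]$, and the factor $(u+1)^{\deg p}$ is strictly positive on $[0,+\infty)$; hence $p|_I(u)$ and $p(t(u))$ agree in sign for every $u \geq 0$. Iterating this construction variable by variable shows that $p|_{\mathcal{B}}(u_1,\ldots,u_n)$ equals $p(t_1(u_1),\ldots,t_n(u_n))$ multiplied by a strictly positive function on $[0,+\infty)^n$. If all nonzero coefficients of $p|_{\mathcal{B}_\alpha}$ share a common sign, then at any point of $[0,+\infty)^n$ the polynomial $p|_{\mathcal{B}_\alpha}$ is a sum of nonnegative monomials weighted by same-sign coefficients with at least one nonzero term, hence strictly nonzero. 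Pulling back via the bijections shows $p$ does not vanish on $\mathcal{B}_\alpha$, and unioning over $\alpha$ gives the conclusion on $\mathcal{B}$.

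\textbf{Necessity (hard direction).} I would argue by induction on the number of variables $n$, taking the classical univariate Vincent's theorem as the base case. Vincent's theorem states that a real univariate polynomial without zeros on $[a,b]$ admits, after finitely many applications of the Möbius substitution \eqref{Polinomio_Uma_Variavel_Restrito} combined with bisections of the interval, a finite decomposition $[a,b] = \bigcup_j I_j$ such that each $p|_{I_j}(u)$ has all coefficients of the same sign (this being a consequence of Descartes' rule of signs applied iteratively, together with the Obrechkoff-type root-location estimates invoked in \cite{Alesina_Galuzzi1998,Uspensky}). For $n \geq 2$, view $p$ as a polynomial in $u_1$ whose coefficients are polynomials (hence continuous functions) in $(u_2,\ldots,u_n)$. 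At each fixed parameter value, Vincent produces a finite subdivision of $[a_1,b_1]$ with the sign property. The coefficients of $p|_{I_j}(u_1)$ depend continuously on $(u_2,\ldots,u_n)$, so the same subdivision and sign pattern persist on an open neighborhood in $(u_2,\ldots,u_n)$-space. Compactness of $\prod_{k=2}^n [a_k,b_k]$ then extracts a finite subcover $\{V_\beta\}$, reducing the problem on each slab $[a_1,b_1] \times V_\beta$ to the analogous problem for polynomials in $(u_2,\ldots,u_n)$ on $V_\beta$. The inductive hypothesis applied to each piece produces the required finite covering.

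\textbf{Main obstacle.} The delicate step is making the compactness argument uniform so that finitely many Vincent-type refinements suffice throughout the parameter box. Pointwise termination of Vincent's algorithm is classical, but one must show that a single finite refinement works on an open neighborhood of each parameter; this rests on the fact that a polynomial nonvanishing on a compact set is uniformly bounded away from zero there, which provides the needed margin for the Descartes sign count to remain stable under small perturbations of the parameters. A second minor technicality is that the transformation \eqref{Polinomio_Uma_Variavel_Restrito} is defined on half-open intervals $(a,b]$ while $\mathcal{B}$ is a product of closed intervals, so the lower boundary faces must be absorbed into lower-dimensional degenerate sub-boxes (or handled by an $\varepsilon$-enlargement followed by a passage to the closed limit); this adjustment is routine and does not affect the substance of the induction.
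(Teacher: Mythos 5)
First, a point of reference: the paper does not prove this theorem. It is imported verbatim from the cited work of Barros and Leandro, with the reader referred to \cite{Barros_Leandro} (and to \cite{Alesina_Galuzzi1998,Uspensky} for the underlying Vincent-type machinery). So there is no in-paper proof to compare your attempt against; what follows is an assessment of your argument on its own terms.

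Your sufficiency direction contains a genuine gap at the boundary of the box. You claim that a polynomial with same-sign coefficients ``with at least one nonzero term'' is strictly nonzero on $[0,+\infty)^n$; this is false, e.g.\ $3u^2+5u$ vanishes at $u=0$. The issue is not cosmetic: $u=0$ maps to the endpoint $b$ under $u\mapsto (au+b)/(u+1)$, and the constant coefficient of $p|_{I}$ equals $p(b)$ (likewise the leading coefficient equals $p(a)$, which is how the unattained endpoint $a$ is controlled). Concretely, $p(x)=x-b$ on $[a,b]$ gives $p|_{[a,b]}(u)=(a-b)u$, whose unique nonzero coefficient has a definite sign, yet $p(b)=0$. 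So the theorem is only correct if ``all coefficients have the same sign'' is read as ``all coefficients are nonzero and share a sign'' (or if the boxes are taken half-open and the omitted faces are treated separately); your argument must be rewritten under that reading, at which point the sufficiency direction does close, since every monomial is then bounded below by the constant term on $[0,+\infty)^n$.

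The necessity direction is a plan rather than a proof, and two steps need to be supplied. First, the reduction within the induction: after transforming in $u_1$ over a subinterval $I_j$, you obtain $\sum_k c_k(u_2,\ldots,u_n)\,u_1^k$ with each $c_k$ of definite sign on the parameter neighborhood; you must apply the inductive hypothesis to each of the finitely many $c_k$ separately and then pass to a \emph{common refinement} of the resulting box covers so that all coefficients acquire the sign property simultaneously --- finite intersections of boxes are boxes, so this works, but it is the step that actually produces the cover and it is absent from your sketch. Second, the base case is not the bare statement of Vincent's theorem but its quantitative form (Alesina--Galuzzi/Obreschkoff): one must bisect $[a,b]$ until the disk having each subinterval as diameter avoids all \emph{complex} roots of $p$, which is possible precisely because $p$ has finitely many roots and none lie on $[a,b]$; only then does the transformed polynomial have no sign variation. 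Your appeal to ``uniform boundedness away from zero'' to stabilize the Descartes count under parameter perturbation is the right instinct, but as written it does not substitute for either of these steps.
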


The polynomial $p|_{I}$ is obtained by substituting the variable with a linear fractional M\"{o}bius transformation $x = \frac{au+b}{u+1}$. We will refer to the coefficients of $p|_{I}$(or $p|_{\mathcal{B}}$) as M\"{o}bius coefficients.

In our case, a partition of the interval $(0,1)$ will determine a cover by blocks of the lifted curve. When necessary, we will obtain a partition by successive bisections of the interval $(0,1)$. As an illustration, in the following example, we present the method employed.
\begin{example}
\label{example_1_of_patition}
Consider the curve given by $g(t) = (g_1(t),g_2(t)) = (t, (t-t_0)^2+a)$ and the polynomial $P(u,v) = \left(v^2-b^2u^2\right)^3\cdot\left((u-t_0)^2+(v-a-\epsilon)^2\right)^3$,
where $\epsilon$ is a positive parameter. Provided $a>b>0$, it is easy to see that $\alpha(t) =  P\circ g(t) $ does not have zeros for $t \in (0,1)$.
In fact, $g_2(t)\geq a>b>bt = bg_1(t)>0$ for   $t \in (0,1)$, so 
$((g_2(t))^2-b^2(g_1(t))^2)^3\neq 0$. The second factor of $P(u,v)$ is zero if and only if 
$(u,v) = (t_0,a+\epsilon)$ which is not on the trace of the curve $g$. Although the curve has the point $(t_0,a)$ in its trace.
Depending on the parameter chosen, it is necessary to have more than one block to cover the curve $\alpha$ without containing a zero of $P$. 
To illustrate, let us choose the parameters
$a = \frac{1}{4}, b=\frac{1}{7}, t_0=\frac{7}{10}, \epsilon = \frac{1}{5}.$
For an interval $I$ let us use the notation $\mathcal{B}_I = g_1(I)\times g_2(I)$. We can see in Figure \ref{fig: figure_Partition_example} that the block $\mathcal{B}_{[0,1]}$ contains zeros of $P$. While $\mathcal{B}_{[0,\frac{1}{4}]}\cup\mathcal{B}_{[\frac{1}{4},\frac{1}{2}]}\cup \mathcal{B}_{[\frac{1}{2}, \frac{3}{4}]}\cup \mathcal{B}_{[ \frac{3}{4}, 1]} $ determines a covering by blocks where $P$(and therefore $\alpha$) does not have zeros.
\begin{figure}[H]
\centering
\begin{tikzpicture}[scale=4.5]
    \def\tzero{0.7}
    \def\a{0.25}
    \def\epsilon{0.2}

    \draw[->] (-0.1,0) -- (1.1,0) node[anchor=north] {$g_1(t)$};
    \draw[->] (0,-0.1) -- (0,1.2) node[anchor=east] {$g_2(t)$};

    \fill[red!20] (0,{(0-\tzero)^2 + \a}) rectangle (0.25,{(0.25 -\tzero)^2 + \a});

    \fill[yellow!20] (0.25,{(0.25-\tzero)^2 + \a}) rectangle (0.5,{(0.5 -\tzero)^2 + \a});
 
    \fill[blue!20] (0.5,{(0.5 - \tzero)^2 + \a}) rectangle (0.75 ,{(0.75 -\tzero)^2 + \a});

     \fill[green!20] (0.75,{(0.75 - \tzero)^2 + \a+0.1}) rectangle (1 ,{(0.75 -\tzero)^2 + \a});

    \draw[thick, blue, domain=0:1, samples=100] plot (\x,{(\x-\tzero)^2 + \a});

    \draw[dotted, step=0.25] (0,0) grid (1, 1);

    \fill (\tzero,  \a + \epsilon) circle (0.3pt) node[anchor=south] {\scriptsize {$(t_0, a + \varepsilon)$}};
    
     \node[below] at (0.25,0) {$\tfrac{1}{4}$};
     \node[below] at (0.5,0) {$\tfrac{1}{2}$};
     \node[below] at (0.75,0) {$\tfrac{3}{4}$};
    \node[below] at (1,0) {1};

      \node[left] at (0,1) {1};
     \node[left] at (0,\a) {$\tfrac{1}{5}$};

\end{tikzpicture}
\caption{Curve $g(t) = (t, (t - t_0)^2 + a)$  with the blocks $B_{[0, \frac{1}{4}]}$,$B_{[ \frac{1}{4}, \frac{1}{2}]}$, $B_{[\frac{1}{2}, \frac{3}{4}]}$, and $B_{[\frac{3}{4}, 1]}$ are shown in red, yellow, blue, and green, respectively. The values of the parameters are $a = \frac{1}{4}, b=\frac{1}{7}, t_0=\frac{7}{10}, \epsilon = \frac{1}{5}$.}
\label{fig: figure_Partition_example}
\end{figure}
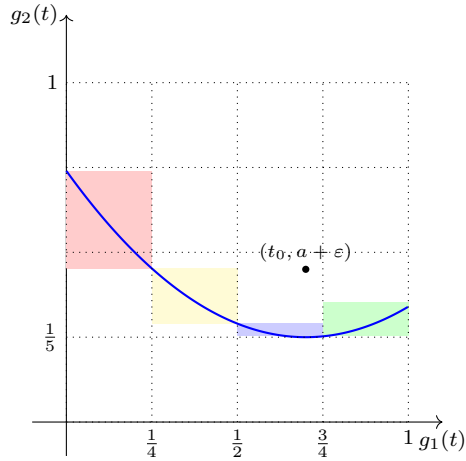

For the values of parameters $a = \frac{1}{4}, b=\frac{1}{7}, t_0=\frac{7}{10}, \epsilon = \frac{1}{5}$, the polynomial is given by
\begin{align*}
    P(u,v) = -\frac{1}{7529536000000} \, {\left(4 \, {\left(10 \, u - 7\right)}^{2} + {\left(20 \, v - 9\right)}^{2}\right)}^{3} {\left(u^{2} - 49 \, v^{2}\right)}^{3}.
\end{align*}
Let us show how to use the method to prove that in the covering 
$\mathcal{B}_{[0,\frac{1}{4}]}\cup\mathcal{B}_{[\frac{1}{4},\frac{1}{2}]}\cup \mathcal{B}_{[\frac{1}{2}, \frac{3}{4}]}\cup \mathcal{B}_{[ \frac{3}{4}, 1]} $
this polynomial has no roots. 
Explicitly, we have
$\mathcal{B}_{[0,\frac{1}{4}]} = [0,\frac{1}{4}]\times [\frac{181}{400},\frac{37}{50}]$ which induces the change of variables by M\"{o}bius transformations
$u =\dfrac{0x+\frac{1}{4}}{x+1}$ and $v =\dfrac{\frac{181}{400}y+\frac{37}{50}}{y+1}$. Taking the numerator of the resulting rational expression we obtain
\begin{align*}
    & \Bigg( \begin{aligned}
        & 1605289 \, x^{2} y^{2} + 5250448 \, x^{2} y + 3210578 \, x y^{2} + 4293184 \, x^{2} \\
        & + 10500896 \, x y + 1565289 \, y^{2} + 8586368 \, x + 5170448 \, y + 4253184
    \end{aligned} \Bigg)^{3} \times \\
    & \Bigg( \begin{aligned}
        & 78401 \, x^{2} y^{2} + 157032 \, x^{2} y + 44802 \, x y^{2} + 91856 \, x^{2} \\
        & + 90064 \, x y + 6401 \, y^{2} + 71712 \, x + 13032 \, y + 19856
    \end{aligned} \Bigg)^{3},
\end{align*}
a polynomial where all coefficients are positive. 
By Theorem \ref{Theorem_Barros_Leandro}, $P(u,v)$ has no roots in $\mathcal{B}_{[0,\frac{1}{4}]}$.
For the block $\mathcal{B}_{[\frac{1}{4}, \frac{1}{2}]}$ the resulting polynomial is 
\begin{align*}
    & \Bigg( \begin{aligned}
        & 649344 \, x^{2} y^{2} + 2037608 \, x^{2} y + 1278688 \, x y^{2} + 1595289 \, x^{2} \\
        & + 4035216 \, x y + 619344 \, y^{2} + 3170578 \, x + 1977608 \, y + 1565289
    \end{aligned} \Bigg)^{3} \times \\
    & \Bigg( \begin{aligned}
        & 36496 \, x^{2} y^{2} + 64672 \, x^{2} y + 36992 \, x y^{2} + 32401 \, x^{2} \\
        & + 57344 \, x y + 10496 \, y^{2} + 28802 \, x + 12672 \, y + 6401
    \end{aligned} \Bigg)^{3},
\end{align*}

For the block $\mathcal{B}_{[\frac{1}{2}, \frac{3}{4}]}$ the resulting polynomial is 

\begin{align*}
    & \Bigg( \begin{aligned}
        & 28125x^{2}y^{2} + 66050x^{2}y + 53750xy^{2} + 38709x^{2} \\
        & + 127100xy + 25000y^{2} + 74918x + 59800y + 35584
    \end{aligned} \Bigg)^{3} \times \\
    & \Bigg( \begin{aligned}
        & 800x^{2}y^{2} + 1440x^{2}y + 600xy^{2} + 656x^{2} \\
        & + 880xy + 425y^{2} + 312x + 690y + 281
    \end{aligned} \Bigg)^{3}.
\end{align*}

And for the block $\mathcal{B}_{[ \frac{3}{4}, 1]}$ the resulting polynomial is

\begin{align*}
    & \Bigg( \begin{aligned}
        & 409849x^{2}y^{2} + 1166128x^{2}y + 759698xy^{2} + 816304x^{2} \\
        & + 2212256xy + 339849y^{2} + 1572608x + 1026128y + 746304
    \end{aligned} \Bigg)^{3} \times \\
    & \Bigg( \begin{aligned}
        & 6641x^{2}y^{2} + 7752x^{2}y + 17282xy^{2} + 2336x^{2} \\
        & + 23504xy + 20641y^{2} + 8672x + 35752y + 16336
    \end{aligned} \Bigg)^{3}.
\end{align*}
As in each block the resulting polynomial has all coefficients with same sign, then $P(u,v)$ has no roots in the covering.

The condition about same sign of coefficients, is sufficient but not necessary. Indeed, one can verify that $\mathcal{B}_{[0,\frac{1}{2}]}\cup \mathcal{B}_{[\frac{1}{2}, \frac{3}{4}]}\cup \mathcal{B}_{[ \frac{3}{4}, 1]} $ is a covering without roots of $P(u,v)$. Nevertheless the resulting polynomial associated to the block $\mathcal{B}_{[0,\frac{1}{2}]}$ has coefficients with positive and negative signs.
\end{example}
To perform the calculations, one should be able to analyze the sign of the M\"{o}bius coefficients at \eqref{Polinomio_Uma_Variavel_Restrito} or \eqref{Polinomio_n_Variaveis_Restrito}. These coefficients can be obtained by looking for the coefficient of each power. 
Explicitly let
$ p(x) =\displaystyle\sum_{i=0}^{D} c_ix^i,$
then 
\begin{equation}
\label{formulae_for_coefficients_ove_variable}
 p_{I}(u)=\sum_{k=0}^{D} \displaystyle\alpha_k u^k, \text{ with }  \alpha_k=\sum_{i=0}^{D}c_i\Big(\sum_{p=0}^i\binom{i}{p}\binom{D-i}{k-p}a^{p}\cdot b^{i-p}\Big). 
\end{equation}
This formula generalizes in a natural way for more variables. For example, for two variables, if 
\begin{align}
&p(x,y) =\displaystyle\sum_{i=0}^{D_x}\sum_{j=0}^{D_y} c_{ij}x^iy^j,\text{ then}\nonumber\\ 
&p|_{[a,b]\times[c,d]}(u,v)=\sum_{k=0}^{D_x}\sum_{l=0}^{D_y} \alpha_{kl}u^kv^l, \text{ with } \nonumber\\ 
&\label{formulae_for_coefficients}\alpha_{kl}=\sum_{i=0}^{D_x}\sum_{j=0}^{D_y}c_{ij}\Big(\sum_{s=0}^i\binom{i}{s}\binom{D_x-i}{k-s}a^{s}\cdot b^{i-s}\Big)\cdot\Big(\sum_{r=0}^j\binom{j}{r}\binom{D_y-j}{l-r}c^{r}\cdot d^{j-r}\Big). 
\end{align}
These formulas allow the calculation of each coefficient independently, 
thus using less memory or even partitioning the calculation where each step is given by the degree. We used this fact in our calculations because it requires fewer computational resources than directly substituting expressions. 


\subsection{Nested Cube}
The symmetry group of the cube is the same as the octahedron, since they are dual to each other. Hence, we refer to the group structure of the previous section(see Section \ref{sec:Octahedron}
).
For the outer cube, we choose the positions {\small$q_1=(1,1,1)^T$}, {\small$q_2=(1,1,-1)^T$}, {\small$q_3 = (1,-1,1)^T,$}
{\small$q_4 = (-1,1,1)^T$}, {\small$q_5=(1,-1,-1)^T$}, {\small$q_6 = (-1,1,-1)^T$}, {\small$q_7=(-1,-1,1)^T$}, {\small$q_8=(-1,-1,-1)^T$}. 
The representations are decomposed as 

\begin{align*}
    \theta &= 2\theta_1 \oplus 2\theta_2 \oplus 2\theta_7 \oplus 2\theta_8, \\
    \theta \otimes \rho &= 
    \begin{aligned}[t]
        &2(\theta\otimes\rho)_1 \oplus 2(\theta\otimes\rho)_2 \oplus 2(\theta\otimes\rho)_5 \oplus 2(\theta\otimes\rho)_6 \\
        &\oplus 4(\theta\otimes\rho)_7 \oplus 4(\theta\otimes\rho)_{8} \oplus 2(\theta\otimes\rho)_9 \oplus 4(\theta\otimes\rho)_{10}
    \end{aligned}
    \end{align*}

Due to the equivalence between subrepresentations of the same indexes, this decomposition induces the following block structure: Two blocks, $\mathcal{C}_1$ and $ \mathcal{C}_2$, of size $2\times 2$ each block.
Two blocks, $\mathfrakC_7$, and $ \mathfrakC_8$, of size $4\times 2$, with three copies each.

\subsubsection{Block $\mathcal{C}_1$}
\label{subsubsec:Block_C1_Cube}
The block $\mathcal{C}_1$ has a determinant given by 
$\gamma_1(t)c+\gamma_0(t)$ where $\gamma_1(t)$ and $\gamma_0(t)$ are given by equations \eqref{Cubo_gamma_1} and \eqref{Cubo_gamma_0}.
We will prove a lemma very similar to Lemma \ref{lema_sinais_tetrahedro}.
\begin{lemma}
\label{lema_sinais_cubo}
\begin{itemize}
    \item[i)] $\gamma_1(t)$ is negative for $t$ in $(0,1)$.
    \item[ii)] $\gamma_0(t)$ is positive for $t$ in $(0,1)$.
    \item[iii)] $c(t):=-\frac{\gamma_0(t)}{\gamma_1(t)}$ is positive for $t$ in $(0,1)$. 
     \item[iv)] $(\mathcal{C}_1)_{11}(c(t),t)$ is positive for $t$ in $(0,1)$.
      \item[v)]There exists a $\delta<1$ such that $(\mathcal{C}_1)_{12}(c(t),t)$ is negative for $t$ in $(0,\delta)$ and is positive for $t$ in $(\delta,1)$ .
     \end{itemize}
\end{lemma}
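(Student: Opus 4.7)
The plan is to mimic the proof of Lemma \ref{lema_sinais_tetrahedro} step by step, but swap out the tools. In the tetrahedron case the denominators $(3t^2+2t+3)^{3/2}$ could be cleared by the substitution in Lemma \ref{parametrizando_uma_expressao}, reducing everything to a single-variable polynomial amenable to Sturm's theorem. In the cube case several distinct irreducible quadratics $a_p t^2 + b_p t + c_p$ appear simultaneously, so instead I will apply the lift construction of Lemma \ref{lemma_of_lift}: for each function $f(t)$ in question, write it in the form $q(t)+\sum_p d_p(t)/(a_pt^2+b_pt+c_p)^{3/2}$, attach one auxiliary variable $u_p=\sqrt{a_pt^2+b_pt+c_p}$ per denominator, and clear denominators to obtain a multivariate polynomial $P(t,u_1,\ldots,u_L)$ whose restriction to the curve $g(t)=(t,u_1(t),\ldots,u_L(t))$ equals $f(t)$ up to a strictly positive factor. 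The curve $g((0,1))$ is enclosed in a block $\mathcal{B}=(0,1)\times\prod_p I_p$, which I cover by finitely many sub-boxes $\mathcal{B}_\alpha$ obtained by successive bisections of $(0,1)$.

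For item i) I take $f=\gamma_1$, form $P$ as above, and use Theorem \ref{Theorem_Barros_Leandro}: on each sub-box $\mathcal{B}_\alpha$ of the covering I compute the Möbius coefficients of $P|_{\mathcal{B}_\alpha}$ via formula \eqref{formulae_for_coefficients} and check that they all have the same sign. This certifies $P\neq 0$ on $\mathcal{B}$, hence $\gamma_1\neq 0$ on $(0,1)$; the sign is then read off from the behavior at a single point, which I can compute symbolically, for instance at $t=1/2$, or from the limits as $t\to 0^+$ and $t\to 1^-$. Item ii) is identical with $\gamma_0$ in place of $\gamma_1$, and item iii) is immediate. For item iv) I substitute $c=c(t)=-\gamma_0(t)/\gamma_1(t)$ into $(\mathcal{C}_1)_{11}$, clear the denominator $\gamma_1(t)$ (whose sign is already known from i)), and apply the same lift-and-cover procedure to the numerator.

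Item v) is the delicate one, because the target function genuinely changes sign, so no single same-sign certificate on a cover of $(0,1)$ can exist. The plan is to locate a candidate $\delta$ numerically, then split $(0,1)=(0,\delta_-)\cup(\delta_-,\delta_+)\cup(\delta_+,1)$ with rational endpoints $\delta_-<\delta<\delta_+$ very close to $\delta$, and apply Theorem \ref{Theorem_Barros_Leandro} separately on the outer two pieces to certify the signs claimed there. To pin down $\delta$ itself I evaluate $(\mathcal{C}_1)_{12}(c(t),t)$ symbolically at $t=\delta_-$ and $t=\delta_+$ to see the sign change, and bound its derivative on $(\delta_-,\delta_+)$ using the same lift-cover technique to rule out additional zeros. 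Shrinking $\delta_+-\delta_-$ by further bisection guarantees uniqueness of the root in $(0,1)$.

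The main obstacle will be the sheer size of the polynomials produced after substituting $c(t)=-\gamma_0(t)/\gamma_1(t)$ and clearing all the $(a_pt^2+b_pt+c_p)^{3/2}$ denominators: the degree in $t$ and the number of auxiliary variables $u_p$ grow quickly, so a naive expansion of $P|_{\mathcal{B}_\alpha}$ is intractable. I would sidestep this by computing each Möbius coefficient independently via \eqref{formulae_for_coefficients}, exploiting the multilinearity of that formula in $t$ and in each $u_p$ block. A secondary obstacle is the fineness of the partition required near $\delta$ in item v), since the Möbius-coefficient certificate deteriorates near a genuine zero; this is handled by adaptive bisection until each piece admits a same-sign certificate.
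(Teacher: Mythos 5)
Your proposal follows essentially the same route as the paper: lift via Lemma \ref{lemma_of_lift}, cover the curve $g((0,1))$ by boxes obtained from bisections of $(0,1)$, and certify nonvanishing of the numerator polynomials through same-sign M\"{o}bius coefficients (Theorem \ref{Theorem_Barros_Leandro}) computed entrywise by \eqref{formulae_for_coefficients}, with signs then fixed by limits or point evaluations. The only minor divergence is item v), where the paper avoids locating $\delta$ numerically by splitting at $t=\tfrac{1}{2}$ and proving negativity on $(0,\tfrac{1}{2}]$, positivity of the derivative on $[\tfrac{1}{2},1]$, and the limit $+\infty$ as $t\to 1^-$; your variant with outer sign certificates plus a single-signed derivative on a small middle interval reaches the same conclusion.
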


Note that the following theorems will follow from the Lemma \ref{lema_sinais_cubo}. These theorems are analogous to theorems \ref{Teorema_1_Tetraedro} and
\ref{Teorema_2_Tetraedro}.
\begin{theorem}
\label{Teorema_1_Cubo}
Consider a configuration in which the positions are the vertices of two nested cubes and $t$ is the ratio between the edges of the two cubes. Moreover, suppose that the masses in each cube are equal. 
The configuration is a central configuration if and only if
the ratio between the outer mass $\mu_1$ and inner mass $\mu_2$ is given by   
\begin{align}
\frac{\mu_1}{\mu_2} = -\frac{24 \, c(t) - \frac{\sqrt{3} t + \sqrt{3}}{3 \, {\left(t+ 1\right)^{3}}} - \frac{3 \, {\left(t + 3\right)}}{{\left(3 \, t^{2} + 2 \, t + 3\right)^{\frac{3}{2}} }} + \frac{3 \, {\left(t - 3\right)}}{{\left(3 \, t^{2} - 2 \, t + 3\right)^{\frac{3}{2}} }} - \frac{\sqrt{3} t - \sqrt{3}}{3 \, {\left(t - 1\right)}^{3}}}{24 \, c(t) - \frac{1}{12} \, \sqrt{3} - \frac{3}{8} \, \sqrt{2} - \frac{3}{4}}, \label{razao_massas_cubo}
\end{align}

where $c(t) = \dfrac{-\gamma_0(t)}{\gamma_1(t)}$ with
\begin{align}
\gamma_1(t) = -&\left((2\sqrt{3} + 9\sqrt{2}  + 18 ) t - \frac{72 \, {\left(t^{2} + 6 \, t + 1\right)}}{{\left(3 \, t^{2} + 2 \, t + 3\right)}^{\frac{3}{2}}} + \frac{72 \, {\left(t^{2} - 6 \, t + 1\right)}}{{\left(3 \, t^{2} - 2 \, t + 3\right)}^{\frac{3}{2}}}\right.\nonumber\\
&\left.- \frac{8 \, \sqrt{3}}{t + 1} - \frac{8 \, \sqrt{3}}{t - 1} + \frac{2 \, \sqrt{3} + 9 \, \sqrt{2} + 18}{t^{2}} \right).\label{Cubo_gamma_1}
\end{align}
and

\begin{align}
    \gamma_0(t) &= 
    \begin{aligned}[t]
        &- \frac{9 (3t + 1) (t + 3)}{(3t^{2} + 2 t + 3)^{3}} + \frac{9 (3t - 1) (t - 3)}{(3 t^{2} - 2t + 3)^{3}} \\
        &+ \frac{6\sqrt{3} \sqrt{2} + 12\sqrt{3} + 54\sqrt{2} + 83}{96t^{2}} \\
        &- \frac{4\sqrt{3}}{(3t^{2} + 2t + 3)^{\frac{3}{2}} (t + 1)} - \frac{2\sqrt{3}}{(3t^{2} - 2t + 3)^{\frac{3}{2}} (t + 1)} \\
        &- \frac{2\sqrt{3}}{(3t^{2} + 2t + 3)^{\frac{3}{2}} (t - 1)} - \frac{4\sqrt{3}}{(3t^{2}-2t+3)^{\frac{3}{2}} (t- 1)} \\
        &- \frac{144t}{(3t^{2}+2t+3)^{\frac{3}{2}} (3t^{2}-2t+3)^{\frac{3}{2}}} \\
        &- \frac{1}{3(t+1)^{4}} + \frac{1}{3(t-1)^{4}}
    \end{aligned}
    \label{Cubo_gamma_0}
\end{align}

 \end{theorem}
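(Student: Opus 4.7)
The plan is to mirror the analysis carried out for the tetrahedron in Subsection \ref{subsec:block1_tetahedron}, applied to the block $\mathcal{C}_1$ corresponding to the trivial irreducible subrepresentation in the decomposition $\theta = 2\theta_1 \oplus 2\theta_2 \oplus 2\theta_7 \oplus 2\theta_8$. First I would compute, from the symmetry-adapted basis of $\theta_1$, the explicit $2 \times 2$ matrix $\mathcal{C}_1(c,t)$. As in the tetrahedron case, the two basis vectors of the $\theta_1$-isotypic component have the property that the first only contributes to the outer-cube masses and the second only to the inner-cube masses; hence the existence of a central configuration with equal masses on each cube is equivalent to the existence of a nontrivial $\vec{\mu} = (\mu_1,\mu_2)$ in $\ker \mathcal{C}_1$, with $\mu_1/\mu_2$ equal to the outer-to-inner mass ratio.

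Expanding $\det(\mathcal{C}_1)$ shows that it is linear in $c$, namely $\det(\mathcal{C}_1) = \gamma_1(t)\,c + \gamma_0(t)$ with $\gamma_1, \gamma_0$ given by \eqref{Cubo_gamma_1}--\eqref{Cubo_gamma_0}. Granted Lemma \ref{lema_sinais_cubo}, the unique value of $c$ making $\det \mathcal{C}_1$ vanish is $c(t) = -\gamma_0(t)/\gamma_1(t)$, which is positive on $(0,1)$. Item iv) of the lemma gives $(\mathcal{C}_1)_{11}(c(t),t) \neq 0$, so the first-row equation $\mu_1(\mathcal{C}_1)_{11} + \mu_2(\mathcal{C}_1)_{12} = 0$ forces $\mu_2 \neq 0$ and yields $\mu_1/\mu_2 = -(\mathcal{C}_1)_{12}/(\mathcal{C}_1)_{11}$; substituting the explicit entries of $\mathcal{C}_1$ produces formula \eqref{razao_massas_cubo}. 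Conversely, if $\mu_1/\mu_2$ satisfies \eqref{razao_massas_cubo} with $c = c(t)$, then the first-row equation holds and, since $\det \mathcal{C}_1 = 0$, the second row is proportional to the first and vanishes as well.

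The main obstacle is proving Lemma \ref{lema_sinais_cubo}, particularly items (i) and (ii). Unlike the tetrahedron case, where a single substitution from Lemma \ref{parametrizando_uma_expressao} rationalizes every radical and Sturm's theorem finishes the job, here $\gamma_1$ and $\gamma_0$ contain both $(3t^2 + 2t + 3)^{3/2}$ and $(3t^2 - 2t + 3)^{3/2}$, which cannot be rationalized simultaneously by a one-variable substitution. I would therefore invoke the lifting construction of Lemma \ref{lemma_of_lift}: set $g(t) = (t,\, u_1(t),\, u_2(t))$ with $u_1(t) = \sqrt{3t^2+2t+3}$ and $u_2(t) = \sqrt{3t^2-2t+3}$, and express $\gamma_1, \gamma_0$ as rational functions $F_1, F_0$ on the box $\mathcal{B} = (0,1) \times I_1 \times I_2$. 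Clearing denominators reduces items (i) and (ii) to showing that two explicit trivariate polynomials have no zero in $\mathcal{B}$; this I would handle via Theorem \ref{Theorem_Barros_Leandro}, partitioning $(0,1)$ by successive bisection (refining further in the $u_1,u_2$ directions if necessary) until, on each sub-box, the M\"obius-transformed polynomial has all its coefficients of one sign, as in Example \ref{example_1_of_patition}. Asymptotic sign information at $t \to 0^+$ and $t \to 1^-$ then pins down the global sign. Item (iii) is an immediate consequence of (i) and (ii), and items (iv)--(v) are treated by the same lifting plus Barros--Leandro strategy applied to $(\mathcal{C}_1)_{11}(c(t),t)$ and $(\mathcal{C}_1)_{12}(c(t),t)$; for (v) the threshold $\delta$ is located by combining a sign change between the limits at $0$ and $1$ with a uniqueness argument supplied by a single-root covering on the lifted box.
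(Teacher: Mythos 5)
Your proposal matches the paper's own argument: the paper derives Theorem \ref{Teorema_1_Cubo} from Lemma \ref{lema_sinais_cubo} by exactly the kernel-of-$\mathcal{C}_1$ reasoning you describe (linearity of $\det \mathcal{C}_1$ in $c$, nonvanishing of $(\mathcal{C}_1)_{11}$, first-row equation giving the mass ratio, and the converse via proportionality of the second row), and it proves the lemma with the same lift $g(t)=(t,\sqrt{3t^2+2t+3},\sqrt{3t^2-2t+3})$ combined with Theorem \ref{Theorem_Barros_Leandro} on the bisection covering $\mathcal{B}_{(0,\frac{1}{2}]}\cup\mathcal{B}_{[\frac{1}{2},1)}$ plus the boundary limits. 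The only cosmetic difference is in item (v), where the paper establishes negativity on $[0,\frac{1}{2}]$, positivity of the derivative on $[\frac{1}{2},1]$, and the limit $+\infty$ at $t\to 1^-$, rather than your single-root-covering phrasing, but this is the same strategy in substance.
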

\begin{theorem}
\label{Teorema_2_Cubo}
Let $\mu_1$ be the mass in the outer cube and $\mu_2$ be the mass in the inner cube in a central configuration of two nested cubes whose ratio between the edges is $t$. There is an open interval $(0,\delta)$ such that if $t \in (0,\delta)$, the masses $\mu_1$ and $\mu_2$ must have the same sign. The masses have opposite signs if $t\in (\delta, 1)$. 
\end{theorem}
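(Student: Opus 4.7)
The plan is to derive Theorem~\ref{Teorema_2_Cubo} as a direct corollary of Lemma~\ref{lema_sinais_cubo} and the mass--ratio formula \eqref{razao_massas_cubo} in Theorem~\ref{Teorema_1_Cubo}, mirroring exactly the argument that produced Theorem~\ref{Teorema_2_Tetraedro} from Lemma~\ref{lema_sinais_tetrahedro}. The substantive content was already packaged into the earlier lemma; here one only has to read off the signs.

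First, I would invoke Theorem~\ref{Teorema_1_Cubo} to rewrite the ratio in the equivalent form
\begin{equation*}
\frac{\mu_1}{\mu_2}=-\frac{(\mathcal{C}_1)_{12}(c(t),t)}{(\mathcal{C}_1)_{11}(c(t),t)},
\end{equation*}
valid whenever $(\mathcal{C}_1)_{11}(c(t),t)\neq 0$. This is legitimate because Lemma~\ref{lema_sinais_cubo}(iv) guarantees that the denominator is strictly positive on all of $(0,1)$, so $\mu_2\neq 0$ and the expression makes sense throughout the parameter range.

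Next I would apply Lemma~\ref{lema_sinais_cubo}(v): there exists $\delta\in(0,1)$ such that $(\mathcal{C}_1)_{12}(c(t),t)<0$ for $t\in(0,\delta)$ and $(\mathcal{C}_1)_{12}(c(t),t)>0$ for $t\in(\delta,1)$. Combined with item (iv), the sign of $\mu_1/\mu_2$ is $+$ on $(0,\delta)$ and $-$ on $(\delta,1)$. A positive ratio forces $\mu_1$ and $\mu_2$ to have the same sign, while a negative ratio forces opposite signs, which is exactly the conclusion of the theorem.

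The actual obstacle is therefore not the present theorem but Lemma~\ref{lema_sinais_cubo} itself; once one trusts its items (iv)--(v), the above deduction is a one-line sign chase. The hard part is hidden in that lemma: because the expressions $(\mathcal{C}_1)_{ij}(c(t),t)$ for the cube case involve the two incommensurable radicals $(3t^2+2t+3)^{3/2}$ and $(3t^2-2t+3)^{3/2}$ simultaneously, a single substitution of Lemma~\ref{parametrizando_uma_expressao} no longer rationalises the expression, so Sturm's theorem is unavailable. Instead one has to use the lift of Lemma~\ref{lemma_of_lift}, producing a polynomial in three variables $(t,u_1,u_2)$, and then apply the Barros--Leandro criterion (Theorem~\ref{Theorem_Barros_Leandro}) on a suitable bisection cover of the block $(0,1)\times I_1\times I_2$ to certify that the relevant numerators and denominators do not vanish, while the sign change described in item (v) is located by bracketing $\delta$ between two rational endpoints of the bisection. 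Once that lemma is in hand, Theorem~\ref{Teorema_2_Cubo} follows immediately as outlined above.
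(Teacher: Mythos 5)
Your proposal is correct and follows essentially the same route as the paper: the paper derives Theorem~\ref{Teorema_2_Cubo} directly from Lemma~\ref{lema_sinais_cubo}, items iv) and v), together with the mass--ratio formula, exactly as in the tetrahedron case (Theorem~\ref{Teorema_2_Tetraedro}). Your sign chase is accurate, and your identification of where the real work lies (the lift of Lemma~\ref{lemma_of_lift} plus the Barros--Leandro criterion to establish the lemma, since the two incommensurable radicals block the rational reparameterization) matches what the paper actually does.
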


We shall proceed to the proof of Lemma \ref{lema_sinais_cubo}.
\begin{proof}
Item {\it i)}.
\end{proof}
Let us first consider $\gamma_1$ in \eqref{Cubo_gamma_1}.
From the last two terms, we see that $ \lim_{t\to 0^+} \gamma_1(t) = \lim_{t\to 1^-} = -\infty$. We only need to prove that this function does not have zeros on the interval $(0,1)$.
We use the Lemma \ref{lemma_of_lift} to get a lift of $\gamma_1$ with
\begin{equation}
\label{lift_for_the_cube}
g(t) =  (t, u_1(t),u_2(t))=(t, \sqrt{3t^2 + 2t + 3}, \sqrt{3t^2 - 2t + 3}),
\end{equation}

 given by
\begin{align*}
\gamma_1(t,u_1,u_2)=&-2 \, \sqrt{3} t - 9 \, \sqrt{2} t - 18 \, t + \frac{8 \, \sqrt{3}}{t + 1} + \frac{8 \, \sqrt{3}}{t - 1} - \frac{2 \, \sqrt{3} + 9 \, \sqrt{2} + 18}{t^{2}} \\&+ \frac{72 \, {\left(t^{2} + 6 \, t + 1\right)}}{u_{1}^{3}} - \frac{72 \, {\left(t^{2} - 6 \, t + 1\right)}}{u_{2}^{3}}. \nonumber
\end{align*}

We look at $\gamma_1(t,u_1,u_2)$ as a rational function and consider the polynomial at the numerator. Following the idea illustrated in Example \ref{example_1_of_patition}, we will prove that this polynomial does not contain zeros on a covering of the image of $g$. 
For an interval $I$, let us use the notation $\mathcal{B}_I = I\times u_1(I)\times u_2(I)$ for the correspondent block that contains the image of $g$.
We verify that the partition $\mathcal{B}_{(0,\frac{1}{2}]}\cup \mathcal{B}_{[\frac{1}{2}, 1)} $ determines a partition where the numerator of $\gamma_1$ does not contain zeros.

To give more details, we describe what occurs with $\mathcal{B}_{(0,\frac{1}{2}]}$.
Using the expression of $g,$ we verify that
$\mathcal{B}_{(0,\frac{1}{2}]} = (0,\frac{1}{2}]\times (\sqrt{3}, \frac{\sqrt{19}}{2}]\times [ \sqrt{\frac{8}{3}}, \sqrt{3}], $ the block obtained by calculating the maximum and minimum of each coordinate of $g(t)$ at the interval $(0, \frac{1}{2}]$.
The substitution \eqref{Polinomio_n_Variaveis_Restrito} is given by
$(t, u_1,u_2)=\left(\frac{0x+\frac{1}{2}x}{x+1},\frac{\sqrt{3}y+\frac{\sqrt{19}}{2}}{y+1},\frac{\sqrt{\frac{8}{3}}z+\sqrt{3}}{z+1}\right).$ Consider the polynomial at the numerator of the resulting rational expression, this polynomial is given by 

\begin{align*}
    & \begin{aligned}[t]
        &36864 \, \sqrt{3} \sqrt{2} \sqrt{\frac{2}{3}} x^{6} y^{3} z^{3} + 73728 \, \sqrt{3} \sqrt{\frac{2}{3}} x^{6} y^{3} z^{3} + \cdots\\
        &\cdots + 12288 \, \sqrt{19} \sqrt{3} \sqrt{\frac{2}{3}} x^{6} y^{2} z^{3} + 175446 \sqrt{19} \sqrt{3} x \\
        &+ 3158028 \, x y + 166212 \, \sqrt{19} \sqrt{\frac{2}{3}} z + 13851 \, \sqrt{19} \sqrt{3} \sqrt{2} \\
        &+ 44802 \, \sqrt{19} x - 169344 \sqrt{3} x - 97308 \, \sqrt{3} y \\
        &+ 249318 \, \sqrt{2} y - 43092 \, \sqrt{19} z - 264384 \, \sqrt{\frac{2}{3}} z \\
        &+ 27702 \, \sqrt{19} \sqrt{3} + 498636 \, y + 5814 \, \sqrt{19} - 44064 \, \sqrt{3}
    \end{aligned}
\end{align*}

It is a polynomial whose coefficients can be obtained independently using generalized formulas like \eqref{formulae_for_coefficients_ove_variable} and \eqref{formulae_for_coefficients} or also by direct substitution.
We verify that all coefficients are positive; hence, the polynomial contains no zeros at box $\mathcal{B}_{(0,\frac{1}{2}]}$.
Similar results hold for block $\mathcal{B}_{[\frac{1}{2}, 1)}$. This shows that $\gamma_1$ does not contain zeros at $(0,1)$ and completes the proof of Item {\it i)}.

Item {\it ii)}.
The proof is very similar to Item {\it i)}. Although no subdivision is needed in this case, the block corresponds to $\mathcal{B}_{(0,1)}$.

Item {\it iii)}. This item is an immediate consequence of items {\it i)} and  {\it ii)}. 

Item {\it iv)} For this item, a slight variation in 
the substitution was done to obtain a polynomial of a higher degree in $t$ and 
a smaller degree in $u_1$ and $u_2$. Equation \eqref{lift_for_the_cube}
implies that $(3t^2 + 2t + 3)^{\frac{3}{2}} = (3t^2 + 2t + 3)\cdot u_1$ and $(3t^2 - 2t + 3)^{\frac{3}{2}} = (3t^2 - 2t + 3)\cdot u_2$.
The block $\mathcal{C}_1$
has entries expressed as
\begin{align*}
(\mathcal{C}_1)_{11} &= 24 \, c - \frac{1}{12} \, \sqrt{3} - \frac{3}{8} \, \sqrt{2} - \frac{3}{4},\\
(\mathcal{C}_1)_{12} &= 24 \, c - \frac{\sqrt{3} t + \sqrt{3}}{3 \, {\left(t+ 1\right)^{3}}} - \frac{3 \, {\left(t + 3\right)}}{{\left(3 \, t^{2} + 2 \, t + 3\right)} u_1} + \frac{3 \, {\left(t - 3\right)}}{{\left(3 \, t^{2} - 2 \, t + 3\right)} u_2} - \frac{\sqrt{3} t - \sqrt{3}}{3 \, {\left(t - 1\right)}^{3}}, \\
(\mathcal{C}_1)_{21} &= 24 \, c t - \frac{\sqrt{3} t + \sqrt{3}}{3 \, {\left(t+ 1\right)^{3} }} - \frac{3 \, {\left(3 \, t + 1\right)}}{{\left(3 \, t^{2} + 2 \, t + 3\right)} u_1} - \frac{3 \, {\left(3 \, t - 1\right)}}{{\left(3 \, t^{2} - 2 \, t + 3\right)} u_2} + \frac{\sqrt{3} t - \sqrt{3}}{3 \, {\left(t - 1\right)}^{3}},\\
(\mathcal{C}_1)_{22} &= 24 \, c t - \frac{2 \, \sqrt{3} + 9 \, \sqrt{2} + 18}{24 \, t^{2}}.
\end{align*}

We calculate $\det(\mathcal{C}_1) = \gamma_1 c+\gamma_0$ and define $c(t, u_1, u_2):= -\frac{\gamma_0}{\gamma_1}$. We substitute this value of $c$ into $(\mathcal{C}_1)_{11}$ and take the numerator, resulting in the polynomial
\begin{align*}
\displaystyle -2916 \, \sqrt{3} \sqrt{2} t^{19} u_1^{2} u_2^{2} - 5832 \, \sqrt{3} t^{19} u_1^{2} u_2^{2} - 26244 \, \sqrt{2} t^{19} u_1^{2} u_2^{2} - 40338 \, t^{19} u_1^{2} u_2^{2}+\cdots \\
\cdots - 34992 \, t^{2} u_1^{2} u_2 
- 279936 \, t^{3} u_2^{2} + 34992 \, t^{2} u_1 u_2^{2} + 139968 \, t^{2} u_1^{2} - 139968 \, t^{2} u_2^{2},
\end{align*}
with degree $19, 2, 2$ into $t, u_1,$ and $u_2$ respectively.
Then, using partitions as before, we prove that the polynomial has no zeros at the covering $\mathcal{B}_{(0,\frac{1}{2}]}\cup \mathcal{B}_{[\frac{1}{2}, 1)} $. Additionally, we calculate the limits  
$\lim_{t \to 0^+}(\mathcal{C}_1)_{11}(c(t),t) = 0$ and 
$\lim_{t \to 1^-}(\mathcal{C}_1)_{11}(c(t),t) = +\infty$. The item {\it iv)} was proved.

Item {\it v).}
To prove this item, we prove the following three facts.
\begin{itemize}
  \item[a)] $(\mathcal{C}_1)_{12}(c(t),t) < 0$ for $t$ in  $[0, 1/2]$.
  \item[b)] The derivative of $(C_1)_{12}(c(t),t) > 0$ for $t$ in  $[1/2, 1]$.
  \item[c)]  $\lim_{t \to 1^-}(\mathcal{C}_1)_{12}(c(t),t) = +\infty$.
 \end{itemize}
 The proofs for the facts {\it a)} and {\it b)}  are similar to the previous items. Therefore, we do not give details here. Item {\it c)} follows from a direct calculation.

\subsubsection{Block $\mathcal{C}_2$}
\label{subsubsec:Block_C2_Cube}
The block $\mathcal{C}_2$ is a $2\times 2$ with entries given by
\begin{align*}
 (\mathcal{C}_2)_{11} &= \frac{1}{12} \, \sqrt{3} - \frac{3}{8} \, \sqrt{2} + \frac{3}{4},\\
 (\mathcal{C}_2)_{12} &= \frac{\sqrt{3} t + \sqrt{3}}{3 \, {\left(t+1\right)^3}} - \frac{3 \, {\left(t + 3\right)}}{{\left(3 \, t^{2} + 2 \, t + 3\right)}^{\frac{3}{2}}} - \frac{3 \, {\left(t - 3\right)}}{{\left(3 \, t^{2} - 2 \, t + 3\right)}^{\frac{3}{2}}} - \frac{\sqrt{3} t - \sqrt{3}}{3 \, {\left(t - 1\right)}^{3}}, \\
(\mathcal{C}_2)_{21}&=\frac{\sqrt{3} t + \sqrt{3}}{3 \, {\left(t + 1\right)^3}} - \frac{3 \, {\left(3 \, t + 1\right)}}{{\left(3 \, t^{2} + 2 \, t + 3\right)}^{\frac{3}{2}}} + \frac{3 \, {\left(3 \, t - 1\right)}}{{\left(3 \, t^{2} - 2 \, t + 3\right)}^{\frac{3}{2}}} + \frac{\sqrt{3} t - \sqrt{3}}{3 \, {\left(t - 1\right)}^{3}}, \\
(\mathcal{C}_2)_{22}&=\frac{2 \, \sqrt{3} - 9 \, \sqrt{2} + 18}{24 \, t^{2}}.
\end{align*}
This block does not depend on $c$. We calculate the determinant $\det(\mathcal{C}_2)$, make the lift given by $g$ in \eqref{lift_for_the_cube}, consider it a rational function, and take the polynomial at the numerator. A calculation shows that there are no zeros on block $\mathcal{B}_{(0, 1)},$ which contains the image of $g$.

\subsubsection{The blocks $\mathfrakC_7$ and $ \mathfrakC_8$}
\label{subsubsec:Block_C_7_C_8_Cube}
The blocks $\mathfrakC_7$ and $\mathfrakC_8$ are blocks of size $4\times 2$. We choose a convenient minor $2\times 2$ to analyze in these cases.
The same substitutions and similar analysis show that the chosen minors do not have zeros on $(0,1)$. So we will omit the details, the interested reader can consult the notebooks. This argument finishes the proof for the nested cube.

The discussion of subsections \ref{subsubsec:Block_C2_Cube} and \ref{subsubsec:Block_C_7_C_8_Cube}
 proves the following theorem.

\begin{theorem}
Consider a central configuration where the positions are the vertices of two nested cubes. The masses in each cube are equal. 
\end{theorem}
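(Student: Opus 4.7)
The plan is to leverage the block decomposition already carried out in Section \ref{Applications: The inverse and direct problem for central configurations of two nested cubes} together with the specific computations performed in subsections \ref{subsubsec:Block_C2_Cube} and \ref{subsubsec:Block_C_7_C_8_Cube}. Suppose $(q,\vec{m})$ is a central configuration of two nested cubes, so $\vec{m}\in \ker S(c)$ for some $c>0$. After the change of basis, $P^{-1}\vec{m}$ lies in $\ker(Q^{-1}SP)$, which by Lemmas \ref{lemma_block_zero} and \ref{lema_decompoe_mais} splits along the four blocks $\mathcal{C}_1, \mathcal{C}_2, \mathfrakC_7, \mathfrakC_8$ (with $\mathfrakC_7, \mathfrakC_8$ each appearing three times). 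The task reduces to identifying which blocks can have a nontrivial kernel for $t\in(0,1)$.

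First, I would argue that the basis vectors of $P$ associated with $\mathcal{C}_1$ are exactly those whose coordinates are constant on each cube. This follows because the trivial subrepresentation $(\theta\otimes\rho)_1$ corresponds to the $G$-invariant subspace of $\mathbb{R}^N$, which for the action permuting the vertices of each cube separately is spanned by the two indicator vectors of the outer and inner cube. Hence the components of $P^{-1}\vec{m}$ attached to $\mathcal{C}_1$ govern the equal-mass-per-cube solution, while the components attached to $\mathcal{C}_2, \mathfrakC_7, \mathfrakC_8$ record departures from equal masses within one of the cubes. Therefore it suffices to prove that $\mathcal{C}_2$ has nonzero determinant on $(0,1)$ and that each of $\mathfrakC_7, \mathfrakC_8$ admits a $2\times 2$ minor that does not vanish on $(0,1)$.

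Second, to establish these non-vanishing statements I would apply the lifting framework of Lemma \ref{lemma_of_lift}. Each relevant determinant or minor is a one-variable function on $(0,1)$ built from rational functions in $t$ and from the radicals $(3t^{2}+2t+3)^{3/2}$ and $(3t^{2}-2t+3)^{3/2}$. Introducing $u_1=\sqrt{3t^{2}+2t+3}$ and $u_2=\sqrt{3t^{2}-2t+3}$, as in \eqref{lift_for_the_cube}, produces a rational function $F(t,u_1,u_2)$; a zero on $(0,1)$ of the original expression forces a zero of the numerator polynomial of $F$ on the block $\mathcal{B}=(0,1)\times u_1((0,1))\times u_2((0,1))$. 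I would then invoke Theorem \ref{Theorem_Barros_Leandro}: choose a partition of $(0,1)$ into sub-intervals $I_k$, form the boxes $\mathcal{B}_{I_k}=I_k\times u_1(I_k)\times u_2(I_k)$, apply the M\"obius substitutions of \eqref{Polinomio_n_Variaveis_Restrito}, and verify that on each $\mathcal{B}_{I_k}$ all M\"obius coefficients of the transformed polynomial have the same sign. This is precisely the strategy illustrated in Example \ref{example_1_of_patition} and already executed for block $\mathcal{C}_1$ in subsection \ref{subsubsec:Block_C1_Cube}.

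The main obstacle is a computational one rather than a conceptual one: the polynomials arising for $\mathfrakC_7$ and $\mathfrakC_8$ have substantially higher degree in $(t,u_1,u_2)$ than those for the tetrahedron or octahedron, and a uniform-sign certificate may fail on the naive partition $\{(0,1)\}$. The key craft is therefore to choose, first, a minor of $\mathfrakC_7$ (respectively $\mathfrakC_8$) whose lift admits a short certificate, and second, a sufficiently fine dyadic partition of $(0,1)$ so that each sub-block $\mathcal{B}_{I_k}$ yields M\"obius coefficients of a single sign. Memory-efficient coefficient-by-coefficient evaluation via the multivariate generalization of \eqref{formulae_for_coefficients} keeps the symbolic computation tractable. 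Once all the required minors are certified nonzero on $(0,1)$, the corresponding coordinates of $P^{-1}\vec{m}$ must vanish, so $\vec{m}$ lies in the span of the two $\mathcal{C}_1$-basis vectors, which by the first paragraph means that the masses are equal on each cube.
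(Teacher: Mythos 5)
Your proposal is correct and follows essentially the same route as the paper: reduce to showing that $\mathcal{C}_2$ has nonvanishing determinant and that $\mathfrak{c}_7$, $\mathfrak{c}_8$ each admit a nonvanishing $2\times 2$ minor on $(0,1)$, then certify this via the lift of Lemma \ref{lemma_of_lift} and the M\"obius-coefficient sign test of Theorem \ref{Theorem_Barros_Leandro} on a partition of $(0,1)$. Your explicit identification of the $\mathcal{C}_1$ columns with the indicator vectors of the two cubes is a welcome clarification that the paper leaves implicit in the cube case; just make sure the minors you select for $\mathfrak{c}_7$ and $\mathfrak{c}_8$ are either independent of $c$ or handled by eliminating $c$ as in the octahedron block $\mathfrak{O}_9$.
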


\section{Conclusion}
\label{Sec: Conclusion}
We summarize the discussion of this section in the following theorems. More details can be obtained in Sections \ref{Applications: The inverse and direct problem for central configurations of two nested tetrahedron, two nested octahedron} and \ref{Applications: The inverse and direct problem for central configurations of two nested cubes}.
\begin{theorem}
    Consider positions at the vertices of two nested tetrahedra or two nested octahedra or nested cubes. The configuration is only central if the masses in each polyhedron are equal.
\end{theorem}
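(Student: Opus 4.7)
The plan is to extract the theorem directly from the block structure produced by the symmetry-adapted basis, rather than solve any new equations. The key observation is that ``equal masses on each polyhedron'' is precisely the statement that the mass vector $\overrightarrow{m}$ lies in the isotypic component of the trivial representation inside $\theta$. So I would reduce the theorem to showing that the mass coordinates in every \emph{non-trivial} isotypic component are forced to vanish.

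First I would make explicit that the symmetry group $G$ acts transitively on each of the two polyhedral orbits, so the trivial representation occurs in $\theta$ with multiplicity exactly $2$, spanned by the two indicator vectors of the outer and inner polyhedra. In each of the explicit change-of-basis matrices constructed in Sections \ref{Applications: The inverse and direct problem for central configurations of two nested tetrahedron, two nested octahedron} and \ref{Applications: The inverse and direct problem for central configurations of two nested cubes} (for instance the first two columns of $P$ in \eqref{Matrix_p_tetraedro}), these two vectors appear with entries constant on each orbit. Any solution of $S(c)\overrightarrow{m}=\mathbf{0}$ supported only in the trivial isotypic component therefore corresponds to a mass distribution with equal masses inside each polyhedron.

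Next I would invoke Lemma \ref{lemma_block_zero} to rewrite $Q^{-1}SP$ in its block form and use the block analyses already performed case by case: for the tetrahedra, Subsection \ref{subsec:block4_tetahedron} establishes that the block $\mathfrak{t}_4$ has a non-vanishing $2\times 2$ minor on $(0,1)$; for the octahedra, Subsections \ref{subsec:Block_5_Octahedron} and \ref{subsec:Block_9_Octahedron} do the same for $\mathfrakO_5$ and $\mathfrakO_9$; for the cubes, Subsections \ref{subsubsec:Block_C2_Cube} and \ref{subsubsec:Block_C_7_C_8_Cube} do the same for $\mathcal{C}_2$, $\mathfrakC_7$ and $\mathfrakC_8$. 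Each such non-vanishing minor implies the kernel of the corresponding non-trivial block is trivial, hence the mass coordinates in the associated non-trivial isotypic component all vanish. Combining these cases with the previous paragraph gives the theorem.

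The substantive step, already carried out in the cited subsections, is the non-vanishing of the distinguished minors on the whole interval $t\in(0,1)$; this is where I expect the real work to live. For the tetrahedra and octahedra the obstacle is surmounted by the rational reparameterization of Lemma \ref{parametrizando_uma_expressao} followed by Sturm's Theorem \ref{Sturms_theorem}, which converts the presence of $(at^2+bt+c)^{3/2}$ denominators into a purely polynomial sign count. For the cube it is not possible to eliminate every radical by a single reparameterization, so the proof uses the lift of Lemma \ref{lemma_of_lift} to trade the curve for a rational function on a box, and then the Möbius-coefficient criterion of Theorem \ref{Theorem_Barros_Leandro} over a finite partition of $(0,1)$. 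Once those verifications are in place, the theorem follows with no further computation.
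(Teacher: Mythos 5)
Your proposal is correct and follows essentially the same route as the paper: identify the trivial isotypic component (multiplicity $2$, spanned by the orbit indicator vectors, i.e.\ the first two columns of $P$) with the equal-mass distributions, and then use the non-vanishing minors of the non-trivial blocks ($\mathfrak{t}_4$; $\mathfrakO_5$, $\mathfrakO_9$; $\mathcal{C}_2$, $\mathfrakC_7$, $\mathfrakC_8$), established via the reparameterization plus Sturm's theorem or the lift plus the Barros--Leandro criterion, to force the remaining mass coordinates to vanish. This is exactly how the paper derives the theorem from the block analyses.
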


\begin{theorem}
\label{final_theorem}    Consider positions at the vertices of two nested tetrahedra or two nested octahedra or nested cubes. For each of the three  cases, there is an interval $(0,\delta)$ such that if the ratio between edges $t$ belongs to this interval, there is a correspondent ratio between the masses that makes the configuration central. The relation between the ratio of masses can be explicitly obtained as a function of $t$.
    There is an interval $(\delta,1)$ such that no central configuration is possible for $t$ in this interval.
\end{theorem}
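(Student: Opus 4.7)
The plan is to assemble the statement directly from the block analyses of Sections \ref{Applications: The inverse and direct problem for central configurations of two nested tetrahedron, two nested octahedron} and \ref{Applications: The inverse and direct problem for central configurations of two nested cubes}, using the Symmetry Theorem \ref{SymmetryTheorem} as the organizing principle. For each polyhedron, the change of basis given by the matrices $P$ and $Q$ splits $S(c)$ into independent blocks indexed by the irreducible constituents of $\theta$ and $\theta\otimes\rho$. The trivial representation produces a distinguished $2\times 2$ block --- $T_1$, $O_1$, or $\mathcal{C}_1$ --- whose two columns of $P$ are the constant-on-each-orbit vectors, so its kernel parametrizes exactly the solutions with equal masses on each polyhedron. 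The remaining blocks $\mathfrak{t}_4$, $\mathfrakO_5$, $\mathfrakO_9$, $\mathcal{C}_2$, $\mathfrakC_7$, $\mathfrakC_8$ govern the non-constant coordinates, and each is shown in the relevant subsection to possess a $2\times 2$ minor that never vanishes on $(0,1)$, forcing those coordinates to be zero. This already pins the mass vector to the equal-mass ansatz and delivers the equal-mass conclusion of the preceding theorem.

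For the existence half, in each case the determinant of the distinguished block is affine in $c$, of the form $\alpha_1(t)c+\alpha_0(t)$. Lemma \ref{lema_sinais_tetrahedro}, its octahedron counterpart sketched in Subsection \ref{subsec:Block_1_Octahedron}, and Lemma \ref{lema_sinais_cubo} establish $\alpha_1(t)<0<\alpha_0(t)$ on $(0,1)$, which pins down a unique positive $c(t)=-\alpha_0(t)/\alpha_1(t)$ that makes the block singular. By item iv) of these sign lemmas, the $(1,1)$-entry of the block is nonzero along the curve $c=c(t)$, so the one-dimensional kernel produces the closed-form mass ratio $\mu_1/\mu_2 = -(T_1)_{12}(c(t),t)/(T_1)_{11}(c(t),t)$ and its $O_1$, $\mathcal{C}_1$ analogues, exactly as recorded in Theorems \ref{Teorema_1_Tetraedro} and \ref{Teorema_1_Cubo}. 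This is the explicit relation asserted in the final statement.

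For the threshold $\delta$, item v) of each sign lemma shows that $(T_1)_{12}(c(t),t)$ is negative on $(0,\delta)$ and positive on $(\delta,1)$, with $\delta$ the unique root on $(0,1)$ obtained via the intermediate value theorem from the boundary limits together with a monotonicity argument applied to the parametrized polynomial. Since $(T_1)_{11}(c(t),t)>0$ throughout $(0,1)$, the ratio $-(T_1)_{12}/(T_1)_{11}$ is positive precisely on $(0,\delta)$ and strictly negative on $(\delta,1)$. As the problem requires positive masses, this simultaneously yields existence on $(0,\delta)$ and non-existence on $(\delta,1)$.

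The principal obstacle is the polynomial sign verification that underlies the sign lemmas, especially for the cube. In the tetrahedron and octahedron cases a single substitution of Lemma \ref{parametrizando_uma_expressao} rationalizes every radical and Sturm's theorem closes each sign check routinely. For the cube, however, the simultaneous presence of $(3t^2+2t+3)^{3/2}$ and $(3t^2-2t+3)^{3/2}$ cannot be uniformized by a single M\"obius-type substitution; the workaround is the lift of Lemma \ref{lemma_of_lift} into the variables $(t,u_1,u_2)$ followed by the multivariate Vincent-type test (Theorem \ref{Theorem_Barros_Leandro}). The delicate step is to find a partition of $(0,1)$ fine enough that on each block $\mathcal{B}_I$ the M\"obius-transformed numerator exhibits uniform-sign coefficients, and to carry out the resulting high-degree multivariate polynomial arithmetic --- precisely the task supported by the coefficient-by-coefficient formulas \eqref{formulae_for_coefficients_ove_variable}--\eqref{formulae_for_coefficients} and the symbolic-computation notebooks referenced at the end of the paper.
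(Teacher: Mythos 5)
Your proposal is correct and follows essentially the same route as the paper: the trivial-representation block carries the equal-mass solutions, the remaining blocks are eliminated by non-vanishing minors, the affine-in-$c$ determinant together with the sign lemmas yields the unique positive $c(t)$ and the explicit mass ratio, and item v) of the sign lemmas locates the threshold $\delta$ separating existence from non-existence. The only slight imprecision is for the octahedron block $\mathfrakO_9$, where the paper shows that two $c$-dependent minors cannot vanish simultaneously rather than exhibiting a single everywhere-nonzero minor; this does not affect the argument.
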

 The reader must contemplate a qualitative aspect of this last theorem: While near the collision, $t=1$, is prohibitive to obtain a central configuration, exhibiting behavior similar to Shub's theorem(see \cite{SHUB}). Near the collision $t=0$, we can always obtain central configurations changing the masses,
exhibiting a behavior remembering
 the works of Xia and Moeckel(\cite{Moeckel_Clusters,Xia}) about clusters of central configurations.

In \cite{Kuo_Chen_Measures}, Chen, Hsu, and Pan give a general lower bound of $\sqrt[3]{2}$ for the relationship between the radii of two concentric layers in a central configuration (see Theorem 5.2 and comment before Section 6 in this reference).
In our work, the fraction $\frac{1}{t}$ represents the ratio between the outer and inner radii of the two nested platonic polyhedra. Theorem \ref{final_theorem} gives a lower bound of $\dfrac{1}{\delta}$ for this ratio. 
 For comparison, we found approximately the lower bounds of $1.88, 1.72,$ and $1.63$ for the cases of tetrahedra, octahedra, and cube, respectively. 
\section{Data availability}
\label{Data availability}
The repository \url{https://github.com/ldsufrpe/block-analysis} contains all the data and code generated during the current study.
The calculations were done using SageMath software (\cite{SAGE}).

\begin{acknowledgements}
We acknowledge the financial support by  Fundação de
Amparo a Ciência e Tecnologia de Pernambuco (Grant No. APQ-1109-1.03/21). The authors would like to thank Eduardo S. G. Leandro for the helpful comments. To Rodrigo Gondim by the suggestion to use the lift as well as the Department of Mathematics at {\it Universidade Federal Rural de Pernambuco -UFRPE} for their assistance.
Also, the Department of Mathematics at {\it Universidade Federal de Pernambuco-UFPE} we acknowledge for the reception for a postdoc of one of the authors during part of the preparation of this work.
\end{acknowledgements}



\end{document}